\documentclass[a4paper, 10pt]{article}
\usepackage[latin1]{inputenc}
\usepackage[english]{babel}

\usepackage{amsmath}
\usepackage{amsthm}
\usepackage{amssymb}
\usepackage{color}
\usepackage{bbm}
\usepackage{todonotes}

\theoremstyle{plain}
\newtheorem{theorem}{Theorem}
\newtheorem{proposition}[theorem]{Proposition}
\newtheorem{lemma}[theorem]{Lemma}
\newtheorem{corollary}[theorem]{Corollary}
\newtheorem{remark}[theorem]{Remark}

\newcommand{\R}{\mathbb{R}}

\newcommand{\mD}{\mathcal{D}}
\newcommand{\mL}{\mathcal{L}}

\newcommand{\mV}{\mathcal{V}}
\newcommand{\dd}{\, \text{d}}

\newcommand{\be}{\mathbf{e}}
\newcommand{\bff}{\mathbf{f}}
\newcommand{\bg}{\mathbf{g}}
\newcommand{\bG}{\mathbf{G}}
\newcommand{\bp}{\mathbf{p}}
\newcommand{\br}{\mathbf{r}}
\newcommand{\bu}{\mathbf{u}}
\newcommand{\bv}{\mathbf{v}}
\newcommand{\bw}{\mathbf{w}}

\newcommand{\by}{\mathbf{y}}
\newcommand{\bz}{\mathbf{z}}
\newcommand{\bphi}{\boldsymbol{\phi}}

\newcommand{\bbH}{\mathbb{H}}
\newcommand{\bbL}{\mathbb{L}}

\DeclareMathOperator{\divv}{div}

\DeclareMathOperator{\Symm}{Sym}

\newenvironment{customthm}[1]
{\innercustomthm}
{\endinnercustomthm}

\newenvironment{customthmm}[1]
{\innercustomthmm}
{\endinnercustomthmm}

\usepackage{hyperref}

\usepackage{vmargin}
\setmarginsrb{3cm}{2.5cm}{3cm}{1.5cm}{0cm}{0cm}{0cm}{1.5cm}

\begin{document}

\title{Feedback Stabilization of the Two-Dimensional Navier-Stokes Equations by Value Function Approximation}

\author{Tobias Breiten\footnote{Institute of Mathematics, University of Graz, Austria. E-mail: tobias.breiten@uni-graz.at} \quad
Karl Kunisch\footnote{Institute of Mathematics, University of Graz, Austria and RICAM Institute, Austrian Academy of Sciences, Linz, Austria. E-mail: karl.kunisch@uni-graz.at} \quad
Laurent Pfeiffer\footnote{Institute of Mathematics, University of Graz, Austria. E-mail: laurent.pfeiffer@uni-graz.at}
}

\maketitle

\begin{abstract}
The value function associated with an optimal control problem subject to the Navier-Stokes equations in dimension two is analyzed.
Its smoothness is established around a steady state, moreover, its derivatives are shown to satisfy a Riccati equation at the order two and generalized Lyapunov equations at the higher orders. An approximation of the optimal feedback law is then derived from the Taylor expansion of the value function. A convergence rate for the resulting controls and closed-loop systems is demonstrated.
\end{abstract}

{\em Keywords:} Stabilization, 2-D Navier-Stokes equations, value function, Taylor expansion, feedback control.

{\em AMS Classification:}
35Q35, 49J20, 49N35, 93D05, 93D15.

\section{Introduction}

In this work we continue our investigations of the value function associated with infinite-horizon optimal control problems of partial differential equations, that we initiated in \cite{BreKP18,BreKP19}. We consider
a stabilization problem of the Navier-Stokes equations in dimension two and focus on the regularity of the value function and its characterization as a solution to a Hamilton-Jacobi-Bellman (HJB) equation.
This task has been the subject of tremendous research, for optimal control problems of a general structure, in general associated with finite-dimensional dynamical systems. The use of the notion of viscosity solutions has allowed to deal with the low regularity of the value function.
In the present paper, to the contrary, we show that the value function is smooth and that the HJB equation is satisfied in the strict sense, in a neighborhood of the steady state.
Moreover, we show that the derivatives of the value function, at the steady state, are solutions to an algebraic Riccati equation (for the order 2) and to linear equations, called generalized Lyapunov equations, for the higher orders. The main interest of these results is the fact that polynomial feedback laws can be derived from Taylor approximations of the value function. Moreover their efficiency can be analyzed.

From a methodological point of view, we mainly follow the techniques that we laid out for bilinear optimal control problems (such as control problems of the Fokker-Planck equation) in \cite{BreKP19} and \cite{BreKP18}.
The Navier-Stokes control system considered here requires a different functional analytic treatment.
In fact, the involved nonlinear terms must be tackled with different estimates, to guarantee, for example, the well-posedness of the closed-loop system. They also lead to different generalized Lyapunov equations.
Moreover, from the point of view of open-loop control of the Navier-Stokes equation, this paper contains results on infinite-horizon optimal control which are not readily available elsewhere.

Feedback stabilization of the Navier-Stokes equations has been and still is an active topic of research. Among the numerous works, we refer to, e.g., \cite{Bar11,BarLT06,BenH16, Fur01,Ray06}, and the references therein. For literature concerning open-loop optimal control of the Navier-Stokes equations, we can only cite a small selection \cite{BTZ2000,C98,CGHUU02,DG08,DI94,GHS91,HinK01,TW06}.

The technique of approximation of the value function with a Taylor expansion dates back to \cite{Alb61,Luk69}, where optimal control problems associated to finite-dimensional control systems were investigated. We also quote follow-up work, for instance in \cite{AguK14,BeeTB00,NavK07}. For infinite-dimensional problems, we are only aware of \cite{BreKP18,BreKP19}. In \cite{BreKP17c}, the numerical solvability of the Lyapunov equations has been addressed. Model reduction techniques based on balanced truncation have been used in this reference to cope with the curse of dimensionality encountered when dealing with PDE controlled systems.

Let us next specify the problem which will be investigated in this paper. Throughout $\Omega \subset \mathbb R^2$ denotes a bounded domain with Lipschitz boundary $\Gamma$. Given two vector valued functions $\boldsymbol{\varphi}$ and $\boldsymbol{\psi}$, we consider a solution $(\bar{\bz},\bar{q})$ of the stationary Navier-Stokes equations
\begin{equation}\label{eq:stat_NSE}
 \begin{aligned}
  -\nu \Delta \bar{\bz} + (\bar{\bz} \cdot \nabla ) \bar{\bz} + \nabla \bar{q}  &= \boldsymbol{\varphi} \quad \text{in } \Omega, \\
  \divv\bar{\bz} &= 0 \quad \text{in } \Omega, \\
  \bar{\bz} &=\boldsymbol{\psi} \quad \text{on } \Gamma.
 \end{aligned}
\end{equation}
Our goal is to find a control $u$ such that the solution $(\bz,q)$ to the transient Navier-Stokes equations
\begin{equation}\label{eq:inhom_NSE}
  \begin{aligned}
   \frac{\partial \bz}{\partial t} &= \nu \Delta \bz - (\bz \cdot \nabla)\bz - \nabla q +\boldsymbol{\varphi} + \tilde{B} u &&  \text{in } \Omega \times (0,T), \\
   \divv\bz&= 0 &&  \text{in } \Omega \times (0,T), \\
   \bz &= \boldsymbol{\psi} &&  \text{on } \Gamma \times (0,T), \\
   \bz(0) &= \bar{\bz} + \by_0 &&
  \end{aligned}
\end{equation}
is stabilized around $\bar{\bz},$ i.e., $\lim\limits_{t\to \infty} \bz(t) = \bar{\bz}$ provided the initial perturbation $\by_0$ is \emph{small} in an appropriate sense. The control operator $\tilde{B}$ will be defined below. Throughout this work, we assume that $\divv \by_0=0$. Our results are concerned with feedback stabilization of \eqref{eq:inhom_NSE} and for this purpose, we consider new state variables $(\by,p):=(\bz,q)-(\bar{\bz},\bar{q})$ which satisfy the following generalized Navier-Stokes equations
\begin{equation}\label{eq:gen_hom_NSE}
  \begin{aligned}
   \frac{\partial \by}{\partial t} &= \nu \Delta \by - (\by \cdot \nabla)\bar{\bz} - (\bar{\bz} \cdot \nabla)\by-(\by \cdot \nabla)\by - \nabla p + \tilde{B} u &&  \text{in } \Omega \times (0,T), \\
   \divv \by &= 0 &&  \text{in } \Omega \times (0,T), \\
   \by &= 0 &&  \text{on } \Gamma \times (0,T), \\
   \by(0) &= \by_0.&&
  \end{aligned}
\end{equation}

The following sections are structured as follows. The  problem statement and fundamental results on the state-equation on the time interval $[0,\infty)$ are given in Section 2. Section 3  contains the existence theory of optimal controls, the adjoint equation, sensitivity analysis, and differentiability of the value function. The characterization of all higher order derivatives of the value as solutions to generalized Lyapunov equations are provided in Section 4. Section 5 contains the Taylor expansion of the value function, and  estimates for convergence rates between the optimal solution and its approximation on the basis of feedback solutions obtained from derivatives of the value function. The paper closes with a very short outlook.

\paragraph{Notation.}
For Hilbert spaces $V\subset Y$ with dense and compact embedding, we consider the Gelfand triple $V\subset Y \subset  V'$ where $V'$ denotes the topological dual of $V$ with respect to the pivot space $Y$. Given $T\in \mathbb R$ we consider the space
\begin{align*}
W(0,T) =\left\{ y \in L^2(0,T;V) \ | \ \frac{\mathrm{d}}{\mathrm{d}t} y \in L ^2(0,T;V') \right\}.
\end{align*}
For $T=\infty$, the space $W(0,T)$ will be denoted by $W_\infty$. For vector-valued functions $\bff \in (L^2(\Omega))^2$, we use the notation $\bff\in \mathbb{L}^2(\Omega)$. Elements $\bff \in \mathbb L^{2}(\Omega)$ will be denoted in boldface and are distinguished from real-valued functions $g \in L^2(\Omega)$. Similarly, we use $\mathbb{H}^2(\Omega)$ for the space $(H^2(\Omega))^2$ and $\mathbb{H}^1_0(\Omega)$ for $(H^1_0(\Omega))^2$. Given a closed, densely defined linear operator $(A,\mD(A))$ in $Y$, its adjoint (again considered as an operator in $Y$) will be denoted with $(A^*,\mD(A^*))$.

Let us introduce some notation that will be needed for the description of polynomial mappings.
For $\delta \geq 0$ and a Hilbert space $Y$, we denote by $B_Y(\delta)$ the closed ball in $Y$ with radius $\delta$ and center 0. For $k \geq 1$, we make use of the following norm:
\begin{equation} \label{eq:NormYk}
\| (y_1,\dots,y_k) \|_{Y^k}= \max_{i=1,\dots,k} \| y_i \|_Y.
\end{equation}
Given a Hilbert space $Z$, we say that $\mathcal{T}\colon Y^k \rightarrow Z$ is a bounded multilinear mapping (or bounded multilinear form when $Z= \R$) if for all $i \in \{ 1,\dots,k \}$ and for all $(z_1,\dots,z_{i-1},z_{i+1},\dots,z_k) \in Y^{k-1}$, the mapping
$z
\in Y \mapsto \mathcal{T}(z_1,\dots,z_{i-1},z,z_{i+1},\dots,z_k) \in Z$ is linear and
\begin{equation} \label{eq:OperatorNormTensor}
\| \mathcal{T} \| := \sup_{y \in B_{Y^k}(1)} \| \mathcal{T}(y) \|_Z < \infty.
\end{equation}
The set of bounded multilinear mappings on $Y^k$ will be denoted by $\mathcal{M}(Y^k,Z)$. For all $\mathcal{T} \in \mathcal{M}(Y^k,Z)$ and for all $(z_1,\dots,z_k) \in Y^k$,
\begin{equation*}
\| \mathcal{T}(z_1,\dots,z_k) \|_Z \leq \| \mathcal{T} \| \, \prod_{i=1}^k \| z_i \|_Y.
\end{equation*}
Given a bounded multilinear form $\mathcal{T}$ and $z_2,\dots,z_k \in Y^{k-1}$, we denote by $\mathcal{T}(\cdot,z_2,\dots,z_k)$ the bounded linear form $z_1 \in Y \mapsto \mathcal{T}(z_1,\dots,z_k) \in \R$. It will be very often identified with its Riesz representative. Note that
\begin{equation} \label{eq:Riesz_identification}
\| \mathcal{T}(\cdot,z_2,\dots,z_k) \|_Y
= \sup_{z_1 \in B_Y(1)} \mathcal{T}(z_1,\dots,z_k)
\leq \| \mathcal{T} \| \prod_{i=2}^k \| z_i \|_Y.
\end{equation}
Bounded multilinear mappings $\mathcal{T} \in \mathcal{M}(Y^k,Z)$ are said to be symmetric if for all $z_1,\dots,z_k \in Y^k$ and for all permutations $\sigma$ of $\{ 1,\dots,k \}$,
\begin{equation*}
\mathcal{T}(z_{\sigma(1)},\dots,z_{\sigma(k)})= \mathcal{T}(z_1,\dots,z_k).
\end{equation*}
Finally, given two multilinears mappings $\mathcal{T}_1 \in \mathcal{M}(Y^k,Z)$ and $\mathcal{T}_2 \in \mathcal{M}(Y^{\ell},Z)$, we denote by $\mathcal{T}_1 \otimes \mathcal{T}_2$ the bounded multilinear form defined by
\begin{equation*}
\mathcal{T}_1 \otimes \mathcal{T}_2(z_1,\dots,z_{k+ \ell})
= \langle \mathcal{T}_1(z_1,\dots,z_k), \mathcal{T}_2(z_{k+1},\dots,z_{k+\ell}) \rangle_Z.
\end{equation*}

Throughout the manuscript, we use $M$ as a generic constant that might change its value between consecutive lines.

\section{Problem formulation}

\subsection{Abstract Cauchy problem}

In this section, we formulate system \eqref{eq:gen_hom_NSE} as an abstract Cauchy problem on a suitable Hilbert space and, subsequently, define the stabilization problem of interest. This procedure is quite standard, see, for instance, \cite{Bar11,BarLT06,Fur01,Ray06,Tem79} for details. We introduce the spaces
\begin{align*}
  Y&:=\left\{ \by \in \bbL^2(\Omega)\, |\, \divv \by =0,\, \by\cdot \vec{n}=0 \text{ on } \Gamma \right\}, \\
  V&:=\left\{ \by \in \bbH_0^1(\Omega)\, | \, \divv \by =0 \right\}.
\end{align*}
It is well-known that $Y$ is a closed subspace of $\bbL^2(\Omega)$. Moreover, we have the orthogonal decomposition
\begin{align}\label{eq:orth_dec_div_free}
  \bbL^2(\Omega) = Y \oplus Y^\perp,
\end{align}
where
\begin{align}\label{eq:Yperp}
  Y^\perp = \left\{\bz =\nabla p \, | \, p \in H^1(\Omega) \right\},
\end{align}
see, e.g., \cite[page 15]{Tem79}. By $P$ we denote the \textit{Leray projector} $P\colon \bbL^2(\Omega) \to Y$ which is the orthogonal projector in $\bbL^{2}(\Omega)$ onto $Y$. Following, e.g., \cite{Bar11}, we define a trilinear form $s$ by
\begin{align}\label{eq:trilinear_form}
 s(\bu,\bv,\bw):=\int_{\Omega} \sum_{i,j=1}^2 u_i w_j \frac{\partial v_j}{\partial x_i}   \dd x = \langle (\bu \cdot \nabla)\bv,\bw \rangle_{\bbL^{2}(\Omega)} , \ \ \forall \bu,\bv,\bw \in V
\end{align}
and a nonlinear operator $F\colon V\to V'$ by
\begin{align}\label{eq:nonlinearity}
 \langle F(\by),\bw \rangle_{V',V} := s(\by,\by,\bw), \ \ \forall \bw \in V.
\end{align}
For the bilinear mapping associated with the linearization of $F$, we introduce the operator
\begin{align}\label{eq:def_bilinear_form}
N\colon V\times V \to V', \ \  \langle N(\by,\bz),\bw \rangle_{V',V}:=s(\by,\bz,\bw).
\end{align}
The \emph{Oseen-Operator} is then defined by
\begin{align}\label{eq:def_oseen}
	A_{0} \colon V\times V \to V', \ \ \langle A_{0}(\by,\bz),\bw \rangle_{V',V} := \langle N(\by,\bz)+N(\bz,\by),\bw \rangle_{V',V}.
\end{align}
The following well-known results (see, e.g., \cite{Bar11}, \cite[Lemma III.3.4]{Tem79}) concerning $s$ and $N$ will be used frequently throughout the paper.

\begin{proposition}\label{prop:estimates_oseen}
The following properties hold for $N$ and $s$:
\begin{itemize}
\item[(i)] $\|N(\by,\bz)\|_{V'}\le M \|\by\|^{\frac{1}{2}}_{Y} \| \bz\|^{\frac{1}{2}}_{Y} \|\by\|^{\frac{1}{2}}_{V} \| \bz\|^{\frac{1}{2}}_{V} $, \text{for all} $\by, \bz \in V$,
  \item[(ii)] $s(\by,\bz,\bw)=-s(\by,\bw,\bz)$, \text{for all} $\by,\bz,\bw \in V$.
\end{itemize}
\end{proposition}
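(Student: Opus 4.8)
The plan is to establish the skew-symmetry (ii) first, by an integration by parts that exploits the two defining constraints of $V$, namely $\divv \by = 0$ in $\Omega$ and the vanishing trace on $\Gamma$, and then to deduce the interpolation estimate (i) from (ii) combined with the two-dimensional Ladyzhenskaya (Gagliardo--Nirenberg) inequality. Note that this ordering is convenient even though the statements are listed in the opposite order: (ii) is the key to obtaining the \emph{symmetric} half-order powers in (i).

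For (ii) I would start from the definition \eqref{eq:trilinear_form} and add the two terms in question, using the product rule componentwise:
\[
s(\by,\bz,\bw) + s(\by,\bw,\bz) = \int_\Omega \sum_{i,j=1}^2 y_i\Big( w_j \frac{\partial z_j}{\partial x_i} + z_j \frac{\partial w_j}{\partial x_i}\Big) \dd x = \int_\Omega \sum_{i} y_i \frac{\partial}{\partial x_i}(\bz\cdot\bw)\dd x .
\]
The right-hand side equals $\int_\Omega \by \cdot \nabla(\bz\cdot\bw)\dd x$, and Green's formula rewrites it as $-\int_\Omega (\divv\by)(\bz\cdot\bw)\dd x + \int_\Gamma (\by\cdot\vec{n})(\bz\cdot\bw)\dd s$. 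Since $\by\in V$ satisfies $\divv\by=0$ and $\by=0$ on $\Gamma$ (because $V\subset\bbH^1_0(\Omega)$), both terms vanish, which is precisely (ii). To justify Green's formula at the regularity of $V$, I would first prove the identity for $\by,\bz,\bw$ in the dense subspace of smooth, compactly supported divergence-free fields and then pass to the limit; this passage needs only the crude continuity bound $|s(\by,\bz,\bw)|\le M\|\by\|_{\bbL^4}\|\bz\|_V\|\bw\|_{\bbL^4}$ obtained from Hölder's inequality, which holds independently of (ii).

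For (i), recall that $\|N(\by,\bz)\|_{V'} = \sup_{\|\bw\|_V\le 1} s(\by,\bz,\bw)$. Invoking (ii) I would rewrite $s(\by,\bz,\bw) = -s(\by,\bw,\bz) = -\int_\Omega \sum_{i,j} y_i z_j \frac{\partial w_j}{\partial x_i}\dd x$, thereby shifting the derivative onto the test function $\bw$. Hölder's inequality with exponents $(4,4,2)$ then gives $|s(\by,\bz,\bw)|\le M\|\by\|_{\bbL^4}\|\bz\|_{\bbL^4}\|\nabla\bw\|_{\bbL^2}$. Applying the planar Ladyzhenskaya inequality $\|\bv\|_{\bbL^4}\le M\|\bv\|_{\bbL^2}^{1/2}\|\nabla\bv\|_{\bbL^2}^{1/2}$, valid for $\bv\in\bbH^1_0(\Omega)$, to both $\by$ and $\bz$, and using $\|\bv\|_{\bbL^2}=\|\bv\|_Y$ together with the equivalence $\|\nabla\bv\|_{\bbL^2}\le M\|\bv\|_V$ (Poincaré on $\bbH^1_0$), I arrive at $|s(\by,\bz,\bw)|\le M\|\by\|_Y^{1/2}\|\by\|_V^{1/2}\|\bz\|_Y^{1/2}\|\bz\|_V^{1/2}\|\bw\|_V$. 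Taking the supremum over $\|\bw\|_V\le 1$ yields (i). The individual computations are short, so the only genuinely delicate point is the justification of the integration by parts in (ii) at the regularity of $V$, that is, the vanishing of the boundary and divergence terms; this is where the density of smooth compactly supported divergence-free fields in $V$ and the preliminary continuity bound for $s$ are required. The remaining ingredient, the two-dimensional Ladyzhenskaya inequality, is classical, and the only conceptual trick in (i) is to use (ii) beforehand so that interpolation delivers the symmetric half-order powers of the $Y$- and $V$-norms in both arguments.
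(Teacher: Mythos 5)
Your proof is correct and follows essentially the same route as the paper's source: the paper gives no proof of Proposition \ref{prop:estimates_oseen}, citing it as well known from \cite{Bar11} and \cite[Lemma III.3.4]{Tem79}, where precisely this argument appears --- skew-symmetry of $s$ via integration by parts using $\divv\by=0$ and the vanishing trace (justified by density of smooth divergence-free fields in $V$), followed by H\"older with exponents $(4,4,2)$ and the planar Ladyzhenskaya inequality. Your remark that (ii) must be invoked \emph{before} interpolating (otherwise the derivative stays on $\bz$ and one only obtains $\|\bz\|_V$ at full power rather than $\|\bz\|_Y^{1/2}\|\bz\|_V^{1/2}$) correctly identifies the one non-mechanical step.
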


With the previous result, we obtain similar properties for time-varying functions $\by,\bz,\bw$.

\begin{lemma}\label{lem:estimates_oseen_time_var}
Let $T \in (0,\infty]$. For all $\by \in W(0,T)$, for all $\bz \in W(0,T)$, and for all $\bw \in L^2(0,T;V)$,
\begin{align*}
& \langle N(\by,\bz),\bw \rangle_{L^2(0,T;V'),L^2(0,T;V)} \\
& \qquad \leq M \| \by \|_{L^\infty(0,T;Y)}^{\frac{1}{2}}
\| \by \|_{L^2(0,T;V)}^{\frac{1}{2}}
\| \bz \|_{L^\infty(0,T;Y)}^{\frac{1}{2}}
\| \bz \|_{L^2(0,T;V)}^{\frac{1}{2}}
\| \bw \|_{L^2(0,T;V)}.
\end{align*}
Moreover, if $\bw \in L^\infty(0,T;V)$,
\begin{align*}
& \langle N(\by,\bz),\bw \rangle_{L^2(0,T;V'),L^2(0,T;V)} \\
& \qquad \leq M \| \by \|_{L^2(0,T;Y)}^{\frac{1}{2}}
\| \by \|_{L^2(0,T;V)}^{\frac{1}{2}}
\| \bz \|_{L^2(0,T;Y)}^{\frac{1}{2}}
\| \bz \|_{L^2(0,T;V)}^{\frac{1}{2}}
\| \bw \|_{L^\infty(0,T;V)},
\end{align*}
where $M$ is the constant given by Proposition \ref{prop:estimates_oseen}.

\end{lemma}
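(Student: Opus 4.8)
The plan is to reduce everything to the pointwise-in-time inequality supplied by Proposition \ref{prop:estimates_oseen}(i) and then to distribute the time integral by Hölder's inequality. First I would observe that since $\by,\bz \in L^2(0,T;V)$, we have $\by(t),\bz(t) \in V$ for almost every $t$, so that Proposition \ref{prop:estimates_oseen}(i) applies pointwise and gives
\begin{equation*}
\|N(\by(t),\bz(t))\|_{V'} \leq M \|\by(t)\|_Y^{1/2}\|\bz(t)\|_Y^{1/2}\|\by(t)\|_V^{1/2}\|\bz(t)\|_V^{1/2}
\end{equation*}
for a.e.\ $t \in (0,T)$. Writing the duality pairing as $\int_0^T \langle N(\by(t),\bz(t)),\bw(t)\rangle_{V',V}\,\text{d}t$ and bounding the integrand by $\|N(\by(t),\bz(t))\|_{V'}\|\bw(t)\|_V$, I am left with a single scalar integral in time to estimate, and both claimed inequalities will follow from two different ways of splitting it.

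For the first estimate I would pull the two $Y$-factors out in the supremum norm, using $\|\by(t)\|_Y \leq \|\by\|_{L^\infty(0,T;Y)}$ and $\|\bz(t)\|_Y \leq \|\bz\|_{L^\infty(0,T;Y)}$, and then apply Hölder's inequality to the remaining integral $\int_0^T \|\by(t)\|_V^{1/2}\|\bz(t)\|_V^{1/2}\|\bw(t)\|_V\,\text{d}t$ with the exponents $(4,4,2)$, noting that $\tfrac14+\tfrac14+\tfrac12=1$. A factor of the form $\|\by(\cdot)\|_V^{1/2}$ measured in $L^4(0,T)$ equals $\|\by\|_{L^2(0,T;V)}^{1/2}$, and likewise for $\bz$, while $\|\bw(\cdot)\|_V$ in $L^2(0,T)$ gives $\|\bw\|_{L^2(0,T;V)}$; this reproduces exactly the claimed bound.

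For the second estimate the argument is identical except that I would instead pull $\|\bw\|_{L^\infty(0,T;V)}$ out of the integral and apply Hölder with the four equal exponents $(4,4,4,4)$ to the remaining $\int_0^T \|\by(t)\|_Y^{1/2}\|\bz(t)\|_Y^{1/2}\|\by(t)\|_V^{1/2}\|\bz(t)\|_V^{1/2}\,\text{d}t$. Each of the four factors then contributes the square root of the corresponding $L^2$-norm in time, yielding the stated product of $\|\cdot\|_{L^2(0,T;Y)}$- and $\|\cdot\|_{L^2(0,T;V)}$-norms, with the same constant $M$.

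The only point requiring a little care, rather than a genuine obstacle, is the finiteness of the $L^\infty(0,T;Y)$-norms on the right-hand side of the first estimate, which is not automatic when $T=\infty$. This is guaranteed by the continuous embedding $W(0,T)\hookrightarrow C([0,T];Y)$ (respectively $W_\infty \hookrightarrow BC([0,\infty);Y)$ for the infinite-horizon case), so that elements of $W(0,T)$ indeed have finite $L^\infty(0,T;Y)$-norm. The measurability of $t \mapsto \|N(\by(t),\bz(t))\|_{V'}$ and the validity of the pointwise inequality off a null set are routine, and the entire argument is uniform in $T \in (0,\infty]$.
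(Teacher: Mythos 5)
Your proposal is correct and follows essentially the same route as the paper: the pointwise bound from Proposition \ref{prop:estimates_oseen}(i) followed by H\"older-type inequalities in time (your generalized H\"older applications with exponents $(4,4,2)$ and $(4,4,4,4)$ amount to the paper's two successive Cauchy--Schwarz steps). The only cosmetic difference is that the paper passes through a single intermediate estimate from which both inequalities are then read off, whereas you tailor the exponents to each case directly.
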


\begin{proof}
Using Proposition \ref{prop:estimates_oseen} and Cauchy-Schwarz inequality (two times), we obtain that
\begin{align*}
&\langle N(\by,\bz),\bw \rangle_{L^{2}(0,T;V'),L^{2}(0,T;V)} \leq  \ M \int_0^T \| \by(t) \|_Y^{\frac{1}{2}} \| \by(t) \|_V^{\frac{1}{2}} \| \bz(t) \|_Y^{\frac{1}{2}} \| \bz(t) \|_V^{\frac{1}{2}} \| \bw(t) \|_V \mathrm{d}t \\
 & \quad \leq\ M \| \by \|_{L^2(0,T;V)}^{\frac{1}{2}} \| \bz \|_{L^2(0,T;V)}^{\frac{1}{2}}
\Big( \int_0^T \| \by(t) \|_Y \| \bz(t) \|_Y \| \bw(t) \|_V^2 \Big)^{\frac{1}{2}}.
\end{align*}
The two inequalities easily follow.
\end{proof}

\begin{corollary}\label{cor:estimates_oseen_time_var}
There exists $M>0$ such that for all $\by$ and $\bz \in W_\infty$,
\begin{equation*}
\| N(\by,\bz) \|_{L^2(0,\infty;V')} \leq M \| \by \|_{W_\infty} \| \bz \|_{W_\infty}.
\end{equation*}
\end{corollary}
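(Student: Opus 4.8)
The plan is to derive the claimed bound directly from the second inequality of Lemma~\ref{lem:estimates_oseen_time_var}, specialized to the case $T = \infty$ and $\bw = \bz$, after first recalling how the $W_\infty$-norm controls the two ingredients appearing on the right-hand side. The key observation is that the $L^2(0,\infty;V')$-norm of $N(\by,\bz)$ can be recovered by duality: since $V \subset Y \subset V'$ is a Gelfand triple, we have
\begin{equation*}
\| N(\by,\bz) \|_{L^2(0,\infty;V')} = \sup_{\| \bw \|_{L^2(0,\infty;V)} \le 1} \langle N(\by,\bz), \bw \rangle_{L^2(0,\infty;V'),L^2(0,\infty;V)}.
\end{equation*}
Thus the first inequality of Lemma~\ref{lem:estimates_oseen_time_var}, which already has $\| \bw \|_{L^2(0,\infty;V)}$ as the test-function factor, is exactly what I need: taking the supremum over $\bw$ in the unit ball of $L^2(0,\infty;V)$ yields
\begin{equation*}
\| N(\by,\bz) \|_{L^2(0,\infty;V')} \le M \| \by \|_{L^\infty(0,\infty;Y)}^{1/2} \| \by \|_{L^2(0,\infty;V)}^{1/2} \| \bz \|_{L^\infty(0,\infty;Y)}^{1/2} \| \bz \|_{L^2(0,\infty;V)}^{1/2}.
\end{equation*}

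The remaining step is to bound each of these four factors by the appropriate $W_\infty$-norm. The two factors $\| \by \|_{L^2(0,\infty;V)}$ and $\| \bz \|_{L^2(0,\infty;V)}$ are trivially dominated by $\| \by \|_{W_\infty}$ and $\| \bz \|_{W_\infty}$, respectively, since $\| \cdot \|_{W_\infty}$ includes the $L^2(0,\infty;V)$ part by definition. For the two $L^\infty(0,\infty;Y)$ factors I would invoke the standard continuous embedding $W_\infty \hookrightarrow L^\infty(0,\infty;Y)$, which furnishes a constant $M'$ with $\| \by \|_{L^\infty(0,\infty;Y)} \le M' \| \by \|_{W_\infty}$ and likewise for $\bz$. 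Combining these four estimates and absorbing all constants into a single generic $M$ gives precisely $\| N(\by,\bz) \|_{L^2(0,\infty;V')} \le M \| \by \|_{W_\infty} \| \bz \|_{W_\infty}$.

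I expect the main subtlety to lie not in the algebra but in the justification of the embedding $W_\infty \hookrightarrow L^\infty(0,\infty;Y)$ over the \emph{unbounded} time interval. On a bounded interval $(0,T)$ the embedding $W(0,T) \hookrightarrow C([0,T];Y)$ is classical (Lions--Magenes), but uniformity of the embedding constant as $T \to \infty$ requires a little care: one typically derives the pointwise identity $\tfrac{\mathrm{d}}{\mathrm{d}t}\| \by(t) \|_Y^2 = 2 \langle \tfrac{\mathrm{d}}{\mathrm{d}t}\by(t), \by(t) \rangle_{V',V}$ and integrates, using Young's inequality to bound $\| \by(t) \|_Y^2$ by $\| \by \|_{L^2(0,\infty;V)}^2 + \| \tfrac{\mathrm{d}}{\mathrm{d}t}\by \|_{L^2(0,\infty;V')}^2$ uniformly in $t$. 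This yields an embedding constant independent of the horizon, which is exactly what makes the $T = \infty$ statement meaningful. Since this embedding is a well-known property of the Gelfand-triple setting introduced in the Notation paragraph, I would simply cite it rather than reprove it, and the whole argument reduces to one line of estimation followed by this citation.
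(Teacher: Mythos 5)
Your argument is correct and is essentially the route the paper intends: the corollary is stated without a separate proof precisely because it follows immediately from the first estimate in Lemma~\ref{lem:estimates_oseen_time_var} with $T=\infty$, dualizing over $\bw$ in the unit ball of $L^2(0,\infty;V)$ and invoking the horizon-uniform embedding $W_\infty \hookrightarrow C_b([0,\infty);Y)$ together with the trivial bound on the $L^2(0,\infty;V)$-factors. The only blemish is your opening sentence, which announces the \emph{second} inequality of the lemma with $\bw=\bz$, whereas the argument you actually (and correctly) carry out uses the \emph{first} inequality with $\bw$ as a free test function.
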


For $\bar{\bz} \in V$,  we further introduce the \textit{Stokes-Oseen} operator $A$ via
\begin{equation}\label{eq:A_NSE}
  \mD(A) = \bbH^2(\Omega)\cap V, \ \ A\by = P(\nu \Delta \by - (\by \cdot \nabla)\bar{\bz} - (\bar{\bz} \cdot \nabla)\by ).
\end{equation}
Considered as operator in $\bbL^2(\Omega)$ the adjoint $A^*$,  as operator in $\bbL^2(\Omega)$, can be characterized by (see, e.g., \cite{Ray06})
\begin{equation}\label{eq:A_adjoint_NSE}
  \mD(A^*) = \bbH^2(\Omega)\cap V, \ \ A^*\bp = P(\nu \Delta \bp - (\nabla \bar{\bz})^T \bp + (\bar{\bz} \cdot \nabla)\bp ).
\end{equation}
We note that as a consequence of Proposition \ref{prop:estimates_oseen}, the operator $A$ can be extended to a bounded linear operator from $V$ to $V'$ in the following manner:
\begin{equation*}
\langle A \by , \bw \rangle_{V',V}
= - \nu \langle \nabla \by, \nabla \bw \rangle_{\bbL^2(\Omega)}
- \langle A_0(\bar{\bz}, \by), \bw \rangle_{V',V}.
\end{equation*}
Note that this extension is consistent, since by definition of the Leray projector $P$, we have $\langle P \by, \bw \rangle_Y = \langle \by, \bw \rangle_Y$ for all $\by \in \bbL^2(\Omega)$ and for all $\bw \in V$. Similarly, $A^*$ can be extended to a bounded linear operator from $V$ to $V'$.

The control operator is chosen to satisfy $\tilde B\in \mathcal{L}(U,\bbL^2(\Omega))$. We further define $B:=P\tilde{B} \in \mathcal{L}(U,Y)$. The controlled state equation \eqref{eq:gen_hom_NSE} can now be formulated as the abstract control system
\begin{equation}\label{eq:abs_Cauchy_NSE}
\begin{aligned}
  \dot{\by}(t) &= A\by - F(\by) + Bu, \quad \by(0) =\by_0,
\end{aligned}
\end{equation}
where the pressure $p$ is eliminated.
We can finally formulate the stabilization problem as an infinite-horizon optimal control problem:
\begin{equation} \label{eq:NLQprob} \tag{$P$}
\inf_{\begin{subarray}{c} \by \in W_\infty \\ u \in L^2(0,\infty;U) \end{subarray}} J(\by,u), \quad \text{subject to: } e(\by,u)= (0,\by_0)
\end{equation}
where $J \colon W_\infty \times L^2(0,\infty;U) \rightarrow \R$ and $e \colon W_\infty \times L^2(0,\infty;U) \rightarrow L^2(0,\infty;V') \times Y$ are defined by
\begin{align}
J(\by,u)= & \frac{1}{2} \int_0^\infty \| \by \|^2_Y \dd t + \frac{\alpha}{2} \int_0^\infty \| u(t) \|_U^2 \dd t \\
e(\by,u)= & \big( \dot{\by}-(A\by - F(\by) + Bu), \by(0) \big).
\end{align}
Let us note that $e\colon W_\infty\times L^2(0,\infty;U) \to L^2(0,\infty;V') \times Y$
is well-defined by Corollary \ref{cor:estimates_oseen_time_var}.

\subsection{Assumptions and first properties}

Throughout the article we assume that the following assumptions hold true.
\begin{customthmm}{A1}\label{ass:A1}
 The stationary solution satisfies $\bar{\bz} \in V$.
 \end{customthmm}
 \begin{customthmm}{A2}\label{ass:A2}
There exists an operator $K \in \mathcal{L}(Y,U)$ such that the semigroup $e^{(A-BK) t}$ is exponentially stable on $Y$.
\end{customthmm}
Assumption \ref{ass:A2} concerning the exponential feedback stabilizability of the Stokes-Oseen
operator  is well investigated. We refer e.g. to \cite{Bar11} where finite-dimensional
feedback operators are constructed on the basis of spectral decomposition or alternatively
by Riccati theory. In this case \ref{ass:A2} can be satisfied with $U=\mathbb{R}^m$, for $m$ appropriately large. Alternatively, we can rely on exact controllability results as obtained  in
\cite{FGIP2011}. They imply that  the finite cost criterion holds. We can then rely on classical results,
see, e.g., \cite{PZ1981} which guarantee the existence of a stabilizing feedback operator.

Let us discuss some important consequences of the above definitions and assumptions.

\begin{customthm}{C1}\label{cons:C1}
There exists $\lambda \geq 0$ and $\theta >0$ such that
\begin{align}\label{eq:VY-coercivity}
\langle (\lambda I-A)\bv,\bv \rangle_Y \ge \theta \|\bv \|_V^2, \quad \text{for all } \bv \in V.
\end{align}
Hence, $A$ generates an analytic semigroup $e^{At}$ on $Y$, see \cite[Part II, Chapter 1, Theorem 2.12]{Benetal07}.
\end{customthm}

\begin{customthm}{C2}\label{cons:C2}
For all $\by_0 \in Y$, for all $\bff \in L^2(0,\infty;V')$, and for all $T>0$, there exists a unique solution $\by \in W(0,T)$ to the system
\begin{align*}
 \dot{\by}=A\by + \bff , \quad \by(0)=\by_0.
\end{align*}
This solution satisfies
\begin{align*}
 \| \by \|_{W(0,T)} \le c(T) (\| \by _0\|_Y + \| \bff \| _{L^2(0,\infty;V')} )
\end{align*}
with a continuous function $c$. Assuming that  $\by \in L^2(0,\infty;Y)$, we consider the equivalent equation
\begin{align*}
 \dot{\by}=\underbrace{(A- \lambda I)}_{A_\lambda} \by + \underbrace{\lambda \by + \bff }_{\bff_\lambda}, \quad \by (0)=\by_0,
\end{align*}
where $\bff_\lambda\in L^2(0,\infty;V')$. By \eqref{eq:VY-coercivity}, the operator  $A_\lambda$ generates an analytic, exponentially stable, semigroup on $Y$ satisfying $\| e^{A_\lambda t} \|_Y \le  e^{- \delta t}$ for some $ \delta > 0$ independent of $t \geq 0$, see \cite[Theorem II.1.2.12]{Benetal07}. It follows that $\by \in W_\infty$ and there exists $M_\lambda$ such that with
\begin{align}\label{eq:reg_est_shift}
\| \by \| _{W_\infty} \le M_{\lambda} ( \| \by _0 \| _Y + \| \bff_{\lambda} \| _{L^2(0,\infty;V')} ).
\end{align}
This estimate is obtained by adapting \cite[Corollary II.3.2.1]{Benetal07} and \cite[Theorem II.3.2.2]{Benetal07} from the temporal domain $(0,T)$ to $(0,\infty)$, which can be achieved using the exponential stability of $e^{A_\lambda t}$.
\end{customthm}

\begin{lemma} \label{lemma:Lipschitz_F}
 There exists a constant $C>0$ such that for all $\delta \in [0,1]$ and for all $\by$ and $\bz \in W_\infty$ with $\|\by\|_{W_\infty}\le \delta$ and $\|\bz\|_{W_\infty}\le \delta$, it holds that
 \begin{align*}
   \|F(\by)-F(\bz)\|_{L^2(0,\infty;V')} \le \delta C\| \by-\bz\|_{W_\infty}.
 \end{align*}
\end{lemma}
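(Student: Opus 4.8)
The plan is to exploit the fact that the nonlinearity $F$ is quadratic and can be written through the bilinear operator $N$, so that the difference $F(\by) - F(\bz)$ telescopes into terms to which Corollary \ref{cor:estimates_oseen_time_var} applies directly.

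First I would observe that, by the definitions \eqref{eq:nonlinearity} and \eqref{eq:def_bilinear_form}, for every $\bw \in V$ one has $\langle F(\by), \bw \rangle_{V',V} = s(\by,\by,\bw) = \langle N(\by,\by), \bw \rangle_{V',V}$, so that $F(\by) = N(\by,\by)$ as elements of $V'$ (and, for time-dependent $\by \in W_\infty$, for almost every $t$). Since $N$ is bilinear, I would then use the telescoping identity
\[
 F(\by) - F(\bz) = N(\by,\by) - N(\bz,\bz) = N(\by, \by - \bz) + N(\by - \bz, \bz),
\]
obtained by adding and subtracting $N(\by,\bz)$.

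Next, I would take $L^2(0,\infty;V')$ norms, apply the triangle inequality, and estimate each of the two terms via Corollary \ref{cor:estimates_oseen_time_var}, which gives
\[
 \| F(\by) - F(\bz) \|_{L^2(0,\infty;V')} \le M \| \by \|_{W_\infty} \| \by - \bz \|_{W_\infty} + M \| \by - \bz \|_{W_\infty} \| \bz \|_{W_\infty}.
\]
Invoking the hypotheses $\| \by \|_{W_\infty} \le \delta$ and $\| \bz \|_{W_\infty} \le \delta$ to bound the leading factors, the right-hand side is at most $2 M \delta \| \by - \bz \|_{W_\infty}$, and the claim follows with $C := 2M$.

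This argument is essentially mechanical, and there is no genuine obstacle beyond correctly identifying $F(\by) = N(\by,\by)$ and checking that the bilinear splitting is legitimate, which rests only on the bilinearity of $N$. I would also note in passing that the restriction $\delta \in [0,1]$ is not actually needed for this estimate — any a priori bound on $\| \by \|_{W_\infty}$ and $\| \bz \|_{W_\infty}$ would suffice — but it is presumably imposed for consistency with the regime in which the lemma is subsequently applied.
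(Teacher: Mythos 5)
Your proof is correct and follows essentially the same route as the paper: writing $F(\by)-F(\bz)$ as a telescoping sum of two $N$-terms (the paper uses $N(\by-\bz,\by)+N(\bz,\by-\bz)$, you use the symmetric variant $N(\by,\by-\bz)+N(\by-\bz,\bz)$, which is immaterial) and then applying Corollary \ref{cor:estimates_oseen_time_var} to each term. Your side remark that the restriction $\delta\in[0,1]$ is not needed for the estimate itself is also accurate.
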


\begin{proof}
We have
 \begin{align*}
 \| F(\by)-F(\bz)\|_{L^2(0,\infty;V')} &= \| N(\by,\by)-N(\bz,\bz)\|_{L^2(0,\infty;V')} \\
 &\le \|N(\by-\bz,\by)\|_{L^2(0,\infty;V')} + \| N(\bz,\by-\bz)\|_{L^2(0,\infty;V')}.
 \end{align*}
 The assertion now easily follows  from Corollary \ref{cor:estimates_oseen_time_var}.
\end{proof}

The following lemma is formulated for an abstract generator $A_{s}$ of an analytic semigroup on $Y$. It will subsequently be used to address the asymptotic behavior of the nonlinear system \eqref{eq:abs_Cauchy_NSE}. We point out that the statement is similar to \cite[Theorem 6.1]{Ray06} which, since it addresses the boundary control case, assumes a slightly more regular initial condtion $\by_0\in \bbH^\varepsilon(\Omega)\cap Y$.

\begin{lemma} \label{lem:eq:non_loc_sol}
  Let $A_s$ be the generator of an exponentially stable analytic semigroup $e^{A_s t}$ on $Y$ such that \eqref{eq:VY-coercivity} holds. Let $C$ denote the constant from Lemma \ref{lemma:Lipschitz_F}. Then there exists a constant $M_s$ such that for all $\by_0 \in Y$ and $\bff \in L^2(0,\infty;V')$ with
  \begin{align*}
\gamma:=    \| \by _0\| _Y + \| \bff \| _{L^2(0,\infty;V')} \le \frac{1}{4CM_s^2}
  \end{align*}
  the system
  \begin{equation}\label{eq:loc_stab_non}
\dot{\by}= A_s\by - F(\by) +\bff, \quad \by(0)= \by_0
\end{equation}
has a unique solution $\by$ in $W_\infty$, which moreover satisfies
\begin{equation*}
\|\by\|_{W_\infty} \le 2 M_s \gamma.
\end{equation*}
\end{lemma}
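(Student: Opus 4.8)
The plan is to recast \eqref{eq:loc_stab_non} as a fixed-point equation and to apply the Banach fixed-point theorem on a suitable closed ball of $W_\infty$; the radius of the ball and the smallness threshold on $\gamma$ are dictated precisely by the quadratic nature of $F$ together with the linear regularity estimate available for $A_s$. First I would record the latter. Since $A_s$ generates an exponentially stable analytic semigroup on $Y$ and \eqref{eq:VY-coercivity} holds, repeating the argument of Consequence~\ref{cons:C2} --- now without the shift by $\lambda$, because $e^{A_s t}$ is already exponentially stable --- yields a constant $M_s$ such that for every $\by_0 \in Y$ and every $\bg \in L^2(0,\infty;V')$ the solution $\bw \in W_\infty$ of $\dot{\bw} = A_s \bw + \bg$, $\bw(0) = \by_0$, satisfies
\begin{equation*}
\| \bw \|_{W_\infty} \le M_s \big( \| \by_0 \|_Y + \| \bg \|_{L^2(0,\infty;V')} \big).
\end{equation*}
This $M_s$ is the constant appearing in the statement. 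I then define $\Phi \colon W_\infty \to W_\infty$ sending $\bz$ to the solution $\bw$ of $\dot{\bw} = A_s \bw - F(\bz) + \bff$, $\bw(0) = \by_0$; this is well defined because $F(\bz) \in L^2(0,\infty;V')$ by Corollary~\ref{cor:estimates_oseen_time_var}, and a solution of \eqref{eq:loc_stab_non} is exactly a fixed point of $\Phi$.

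Set $\mathcal{B} := \{ \bz \in W_\infty : \| \bz \|_{W_\infty} \le 2 M_s \gamma \}$, a closed subset of $W_\infty$. After enlarging the generic constants if necessary I may assume $C M_s \ge 1$, so that $2 M_s \gamma \le \frac{1}{2 C M_s} \le 1$ whenever $\gamma \le \frac{1}{4 C M_s^2}$; this guarantees that the radius of $\mathcal{B}$ does not exceed $1$, as required to invoke Lemma~\ref{lemma:Lipschitz_F}. For the self-mapping property I use $F(0) = 0$, so that Lemma~\ref{lemma:Lipschitz_F} applied to $\bz$ and $0$ with $\delta = \| \bz \|_{W_\infty} \le 1$ gives $\| F(\bz) \|_{L^2(0,\infty;V')} \le C \| \bz \|_{W_\infty}^2$. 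Combining this with the linear estimate and $\| \bz \|_{W_\infty} \le 2 M_s \gamma$ yields
\begin{equation*}
\| \Phi(\bz) \|_{W_\infty} \le M_s \big( \gamma + C \| \bz \|_{W_\infty}^2 \big) \le M_s \gamma \big( 1 + 4 C M_s^2 \gamma \big) \le 2 M_s \gamma ,
\end{equation*}
so that $\Phi(\mathcal{B}) \subset \mathcal{B}$.

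For the contraction property I take $\bz_1, \bz_2 \in \mathcal{B}$; then $\Phi(\bz_1) - \Phi(\bz_2)$ solves the linear equation with zero initial datum and right-hand side $-(F(\bz_1) - F(\bz_2))$, whence by the linear estimate and Lemma~\ref{lemma:Lipschitz_F} with $\delta = 2 M_s \gamma$,
\begin{equation*}
\| \Phi(\bz_1) - \Phi(\bz_2) \|_{W_\infty} \le M_s \, (2 M_s \gamma) \, C \, \| \bz_1 - \bz_2 \|_{W_\infty} = 2 C M_s^2 \gamma \, \| \bz_1 - \bz_2 \|_{W_\infty} \le \frac{1}{2} \| \bz_1 - \bz_2 \|_{W_\infty}.
\end{equation*}
Thus $\Phi$ is a $\frac{1}{2}$-contraction on the complete metric space $\mathcal{B}$, and the Banach fixed-point theorem provides a unique fixed point $\by \in \mathcal{B}$, that is, a solution of \eqref{eq:loc_stab_non} satisfying $\| \by \|_{W_\infty} \le 2 M_s \gamma$.

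The genuine difficulty is not existence but the bookkeeping of the two competing constants $C$ and $M_s$: the radius $2 M_s \gamma$ and the threshold $\frac{1}{4 C M_s^2}$ must be chosen so that one and the same ball simultaneously absorbs the quadratic term (self-mapping) and produces a contraction factor strictly below $1$; the prescribed bound on $\gamma$ is exactly what makes both inequalities $4 C M_s^2 \gamma \le 1$ and $2 C M_s^2 \gamma \le \frac{1}{2}$ hold. Uniqueness of the fixed point \emph{within} $\mathcal{B}$ is immediate from the contraction; upgrading it to uniqueness in all of $W_\infty$ requires only the routine observation that any $W_\infty$-solution is a fixed point of $\Phi$ and that the contraction estimate above applies on every ball of radius at most $1$, which I would dispatch separately.
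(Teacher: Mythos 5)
Your existence argument and the bookkeeping of the constants $C$ and $M_s$ match the paper's proof essentially line for line: same linear estimate, same ball of radius $2M_s\gamma$, same normalization making that radius at most $1$, same invariance and contraction computations. That part is correct.

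The genuine gap is the uniqueness in all of $W_\infty$, which you defer to a ``routine observation'' that does not actually work. Your suggested fix --- that the contraction estimate applies on every ball of radius at most $1$ --- fails on two counts. First, an arbitrary $W_\infty$-solution of \eqref{eq:loc_stab_non} need not lie in any ball of radius $\le 1$, and Lemma \ref{lemma:Lipschitz_F} is only stated for $\delta \in [0,1]$, so the Lipschitz bound on $F$ is simply unavailable for large solutions. Second, even on a ball of radius $\delta \le 1$ the estimate only yields the factor $M_s C \delta$, which is a contraction only when $\delta < \tfrac{1}{M_s C}$; with your normalization $C M_s \ge 1$ this threshold is $\le 1$, so for $\delta$ between $\tfrac{1}{M_s C}$ and $1$ you get nothing. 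A telltale sign that something is missing is that your proof never uses the hypothesis that $A_s$ satisfies \eqref{eq:VY-coercivity}, which is included in the statement precisely for the uniqueness step. The paper proves uniqueness in $W_\infty$ by a completely different mechanism: given two solutions $\by,\bz \in W_\infty$, it writes the equation for $\be = \by - \bz$, tests with $\be$, uses the coercivity \eqref{eq:VY-coercivity} together with Proposition \ref{prop:estimates_oseen} and Young's inequality to absorb the $V$-norm terms, and arrives at a differential inequality of the form
\begin{equation*}
\tfrac{1}{2}\tfrac{\mathrm{d}}{\mathrm{d}t}\| \be \|_Y^2 \le \Big( \alpha + M \big( \| \by \|_Y^2 \| \by \|_V^2 + \| \bz \|_Y^2 \| \bz \|_V^2 \big) \Big) \| \be \|_Y^2 ,
\end{equation*}
whose coefficient is integrable in time because $\by,\bz \in W_\infty$; Gronwall's inequality and $\be(0)=0$ then give $\be \equiv 0$ with no smallness assumption on the two solutions. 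You need to supply an argument of this kind (or an equivalent local-in-time continuation argument) to close the proof.
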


\begin{proof}
We follow the line of argumentation provided in the proof \cite[Theorem 6.1]{Ray06}.
 Since the semigroup $e^{A_s t}$ is exponentially stable on $Y$, it follows that for all $(\by_0,\bg)\in Y\times L^2(0,\infty;V')$ the system
 \begin{align*}
  \dot{\bz}=A_s \bz + \bg , \quad \bz(0)=\by_0
 \end{align*}
has a unique solution $\bz \in W_\infty$. Moreover, there exists a constant $M_s$ such that
\begin{align}\label{eq:reg_est_stabsys}
  \| \bz \|_{W_\infty} \le M_s (\| \by _0 \| _Y + \| \bg \|_{L^2(0,\infty;V')} ).
\end{align}
Without loss of generality we can assume that $M_s\ge \frac{1}{2C}$. We claim that the constant $M_s$ is the one announced in the assertion. This will be shown by a fixed-point argument applied to the system \eqref{eq:loc_stab_non}. For this purpose, let us define $\mathcal{M}=\left\{ \by \in W_\infty \ | \ \|\by\|_{W_\infty} \le 2 M_s \gamma \right\}$
and let us define the mapping $\mathcal{Z}\colon \mathcal{M} \ni \by \mapsto \bz=\mathcal{Z}(\by)\in W_\infty$, where $\bz$ is the unique solution of
\begin{align*}
\dot{\bz}=A_s\bz - F(\by) + \bff , \quad \bz(0)=\by_0.
\end{align*}
If there exists a unique fixed point of $\mathcal{Z}$, then it is a unique solution of \eqref{eq:loc_stab_non} in $\mathcal{M}$. With $C$ and $M_s$ given, we shall use Lemma
\ref{lemma:Lipschitz_F} with $\delta= 2M_s\gamma\le\frac{1}{2CM_s}\le 1$.
Together with \eqref{eq:reg_est_stabsys}, it follows that
\begin{align*}
  \| \bz \| _{W_\infty} &\le M_s ( \| F(\by) \|_{L^2(0,\infty;V')} + \| \bff \|_{L^2(0,\infty;V')} + \| \by_0 \| _Y )  \\
 & \le M_s \left(\frac{1}{2M_s} \| \by \|_{W_\infty}+ \gamma \right) \le 2  M_s  \gamma.
\end{align*}
This implies $\mathcal{Z}(\mathcal{M})\subseteq \mathcal{M}$. For $\by_1,\by_2\in \mathcal{M}$ consider now $\bz=\mathcal{Z}(\by_1)-\mathcal{Z}(\by_2)$ solving
\begin{align*}
  \dot{\bz}= A_s\bz - F(\by_1)+F(\by_2), \quad \bz(0)=0.
\end{align*}
Again by \eqref{eq:reg_est_stabsys} and Lemma \ref{lemma:Lipschitz_F} we obtain
\begin{align*}
\| \mathcal{Z}(\by_1)-\mathcal{Z}(\by_2) \| _{W_\infty} &=  \| \bz \| _{W_\infty}\le M_s (\| F(\by_1)-F(\by_2) \|_{L^2(0,\infty;V')} ) \\
 &\le M_s \delta C \| \by_1 - \by_2 \| _{W_\infty} \le \frac{1}{2} \| \by_1 -\by_2 \| _{W_\infty}.
\end{align*}
In other words, $\mathcal{Z}$ is a contraction in $\mathcal{M}$ and therefore, there exists a unique $\by \in \mathcal{M}$ such that $\mathcal{Z}(\by)=\by$. Regarding uniqueness in $W_{\infty}$, consider two solutions $\by,\bz \in W_{\infty}$. For the difference $\be :=\by-\bz$ it then holds
\begin{align*}
	\dot{\be}=A_{s} \be  - F(\by)+F(\bz), \ \ \be(0)=0.
\end{align*}
Multiplying with $\be$ and taking inner products yields
\begin{align*}
	\frac{1}{2}\frac{\mathrm{d}}{\mathrm{d}t}\| \be \|_{Y}^{2} = \langle A_{s} \be ,\be \rangle_{Y} - \langle F(\by)-F(\bz),\be \rangle_{V',V}.
\end{align*}
Since $A_{s}$ satisfies an inequality of the form \eqref{eq:VY-coercivity}, we have
\begin{align*}
	\frac{1}{2}\frac{\mathrm{d}}{\mathrm{d}t} \| \be \| _{Y}^{2} \le \alpha \| \be \|_{Y}^{2} - \beta \| \be \|_{V}^{2} + \| F(\by)-F(\bz)\|_{V'} \| \be \|_{V},
\end{align*}
where $\alpha \ge 0$ and $\beta >0 $.
Using Proposition \ref{prop:estimates_oseen} and Young's inequality we further obtain
\begin{align*}
	\frac{1}{2}\frac{\mathrm{d}}{\mathrm{d}t} \| \be \| _{Y}^{2} &\le \alpha \| \be \|_{Y}^{2} - \beta \| \be \|_{V}^{2} + M\left( \| \be \|_{Y}^{\frac{1}{2}}\| \by \|_{Y}^{\frac{1}{2}}\| \be \|_{V}^{\frac{1}{2}} \| \by \|_{V}^{\frac{1}{2}}+ \| \be \|_{Y}^{\frac{1}{2}}\| \bz \|_{Y}^{\frac{1}{2}}\| \be \|_{V}^{\frac{1}{2}} \| \bz \|_{V}^{\frac{1}{2}} \right) \| \be \|_{V}\\
	&\le \alpha \| \be \|_{Y}^{2} - \beta \| \be \|_{V}^{2} + \frac{M}{ \iota}\| \be \|^{2}_{V}  + \frac{M\iota}{2}\| \be \| _{V} \left(  \| \be \|_{Y}\| \by \|_{Y} \| \by \|_{V}+ \| \be \|_{Y} \| \bz \|_{Y} \| \bz \|_{V}\right) \\
	&\le \alpha \| \be \|_{Y}^{2} - \beta \| \be \|_{V}^{2} + \frac{M}{ \iota}\| \be \|_{V} ^{2} + \frac{M\iota}{2\kappa} \|\be\|_{V}^{2}+ \frac{M\iota\kappa}{4} \left(  \| \be \|_{Y}^{2}\| \by \|_{Y} ^{2}\| \by \|_{V}^{2}+ \| \be \|_{Y}^{2}\| \bz \|_{Y}^{2} \| \bz \|_{V}^{2}\right) .
\end{align*}
Taking $\iota$ and $\kappa$ sufficiently large, it holds that
\begin{align*}
	\frac{1}{2}\frac{\mathrm{d}}{\mathrm{d}t} \| \be \| _{Y}^{2} &\le \left(\alpha + \frac{M\iota\kappa}{4} \left(  \| \by \|_{Y} ^{2}\| \by \|_{V}^{2}+ \| \bz \|_{Y}^{2} \| \bz \|_{V}^{2}\right) \right) \|\be \|_{Y}^{2}.
\end{align*}
Since $\by,\bz \in W_{\infty}$ and $\be(0)=0$, with Gronwall's inequality, we conclude that $\be(t)=0$ for all $t\ge 0$. Hence, $\by=\bz$ showing the uniqueness of the solution in $W_{\infty}$.
\end{proof}

The following two corollaries are consequences of Lemma \ref{lemma:Lipschitz_F} and Lemma \ref{lem:eq:non_loc_sol}. The constant $C$ which is employed is the one given by Lemma \ref{lemma:Lipschitz_F}.

\begin{corollary}\label{cor:feas_control}
There exists a constant $M_K>0$ such that for all $\by_0\in Y$ and for all $\bff\in L^2(0,\infty;V')$ with
\begin{align*}
\gamma:=\| \by_0 \|_Y + \| \bff \| _{L^2(0,\infty;V')} \le \frac{1}{4CM_K^2} 
\end{align*}
there exists a control $u \in L^2(0,\infty;U)$ such that the system
\begin{equation}\label{eq:loc_stab_non2}
\dot{\by}= A\by + Bu - F(\by) +\bff, \quad \by(0)= \by_0
\end{equation}
has a unique solution $\by \in W_\infty$ satisfying
\begin{align*}
\|\by \|_{W_\infty} \le 2M_K \gamma  \quad \text{and} \quad \|u\|_{L^2(0,\infty;U)}\le 2 \|K\|_{\mathcal{L}(Y)} M_K \gamma.
\end{align*}
\end{corollary}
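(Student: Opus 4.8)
The plan is to close the loop with the stabilizing feedback furnished by Assumption \ref{ass:A2} and then reduce the statement to Lemma \ref{lem:eq:non_loc_sol}. Concretely, I would set $A_s := A - BK$, where $K \in \mathcal{L}(Y,U)$ is the operator from Assumption \ref{ass:A2}, and look for the solution produced by the feedback control $u = -K\by$. With this choice \eqref{eq:loc_stab_non2} becomes
\[
\dot{\by} = (A-BK)\by - F(\by) + \bff = A_s \by - F(\by) + \bff, \quad \by(0) = \by_0,
\]
which is precisely an equation of the type treated in Lemma \ref{lem:eq:non_loc_sol}. Thus the whole corollary will follow once $A_s$ is shown to fit the hypotheses of that lemma.

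The only substantive point is therefore to verify that $A_s$ generates an exponentially stable analytic semigroup on $Y$ and satisfies a coercivity estimate of the form \eqref{eq:VY-coercivity}. Exponential stability is granted directly by Assumption \ref{ass:A2}. Analyticity follows because $BK \in \mathcal{L}(Y)$ is a bounded perturbation of $A$, which already generates an analytic semigroup by Consequence \ref{cons:C1}, and analyticity is preserved under bounded perturbations. The coercivity is the main obstacle, though it is mild: I would start from \eqref{eq:VY-coercivity} for $A$ and absorb the perturbation by shifting the spectral parameter. Using $\langle BK\bv,\bv\rangle_Y \ge -\|BK\|_{\mathcal{L}(Y)}\|\bv\|_Y^2$, one obtains for every $\bv \in V$
\begin{align*}
\big\langle \big((\lambda + \|BK\|_{\mathcal{L}(Y)})I - A_s\big)\bv, \bv \big\rangle_Y
&= \langle (\lambda I - A)\bv, \bv \rangle_Y + \|BK\|_{\mathcal{L}(Y)}\|\bv\|_Y^2 + \langle BK\bv, \bv \rangle_Y \\
&\ge \theta \|\bv\|_V^2,
\end{align*}
so that $A_s$ satisfies \eqref{eq:VY-coercivity} with the shifted parameter $\lambda + \|BK\|_{\mathcal{L}(Y)}$ and the same constant $\theta$.

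With these verifications in place, Lemma \ref{lem:eq:non_loc_sol} applies to $A_s$ and yields a constant $M_K$ (namely the $M_s$ associated with $A_s$) such that, whenever $\gamma = \|\by_0\|_Y + \|\bff\|_{L^2(0,\infty;V')} \le \tfrac{1}{4CM_K^2}$ with $C$ the constant of Lemma \ref{lemma:Lipschitz_F}, the closed-loop equation has a unique solution $\by \in W_\infty$ with $\|\by\|_{W_\infty} \le 2M_K\gamma$. This is the asserted state bound, and uniqueness of the trajectory is inherited directly from the lemma (alternatively, for the open-loop reading with $u$ frozen, uniqueness follows from the same Gronwall argument as in Lemma \ref{lem:eq:non_loc_sol}, which only invokes coercivity, here available for $A$ through Consequence \ref{cons:C1}). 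It remains to estimate $u = -K\by$. Since $K \in \mathcal{L}(Y,U)$ and $W_\infty \hookrightarrow L^2(0,\infty;Y)$,
\[
\|u\|_{L^2(0,\infty;U)} = \|K\by\|_{L^2(0,\infty;U)} \le \|K\|_{\mathcal{L}(Y)}\|\by\|_{L^2(0,\infty;Y)} \le \|K\|_{\mathcal{L}(Y)}\|\by\|_{W_\infty} \le 2\|K\|_{\mathcal{L}(Y)}M_K\gamma,
\]
which is exactly the claimed control bound. Overall, the only genuine work is the coercivity verification for $A_s$; the rest is a direct transcription of Lemma \ref{lem:eq:non_loc_sol} together with the boundedness of $K$.
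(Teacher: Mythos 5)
Your proposal is correct and follows essentially the same route as the paper: close the loop with the stabilizing feedback $u=-K\by$ from Assumption \ref{ass:A2} and apply Lemma \ref{lem:eq:non_loc_sol} to $A_s=A-BK$, then bound $u$ via the boundedness of $K$. The paper states this in two lines; your additional verification of the hypotheses of the lemma (analyticity under bounded perturbation and the shifted coercivity estimate) is a correct filling-in of details the paper leaves implicit.
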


\begin{proof}
By assumption \ref{ass:A2}, there exists $K$ such that $A-BK$ generates an exponentially stable, analytic  semigroup on $Y$. The result then follows by applying Lemma \ref{lem:eq:non_loc_sol} to the system
\begin{align*}
   \dot{\by}=(A-BK)\by -F(\by)+\bff , \quad \by(0)=\by_0.
  \end{align*}
and by defining $u= -K\by$.
\end{proof}

In the following corollary, we assume without loss of generality that the constant $M_\lambda$ given by Consequence \ref{cons:C2} is such that $M_\lambda \geq \frac{1}{2C}$.

\begin{corollary}\label{cor:more_regularity}
Let $(\by_0,\bff)\in Y\times L^2(0,\infty;V')$ let $u \in L^2(0,\infty;U)$ be such that the system
  \begin{align*}
    \dot{\by} = A\by - F(\by) + Bu + \bff, \ \ \by(0)=\by_0
  \end{align*}
has a solution $\by \in L^2(0,\infty;Y)$.  If
  \begin{align*}
\gamma :=\| \by_0 \|_Y + \| \bff + \lambda \by + Bu \| _{L^2(0,\infty;V')} \le \frac{1}{4CM_\lambda^2} ,
\end{align*}
then $\by \in W_\infty$ and it holds that
\begin{align*}
  \| \by \| _{W_\infty } \le 2M_{\lambda} \gamma.
  \end{align*}
\end{corollary}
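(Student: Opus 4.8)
The plan is to reduce everything to Lemma \ref{lem:eq:non_loc_sol} applied to the shifted operator $A_\lambda = A - \lambda I$, which (unlike $A$ itself) generates an exponentially stable analytic semigroup. First I would rewrite the given equation in the form
\[
\dot{\by} = A_\lambda \by - F(\by) + \bg, \quad \by(0) = \by_0, \qquad \bg := \lambda \by + Bu + \bff,
\]
using $A_\lambda + \lambda I = A$. The term $\bg$ belongs to $L^2(0,\infty;V')$: indeed $\bff \in L^2(0,\infty;V')$ by hypothesis, $Bu \in L^2(0,\infty;Y) \hookrightarrow L^2(0,\infty;V')$ because $B \in \mathcal L(U,Y)$, and $\lambda \by \in L^2(0,\infty;Y) \hookrightarrow L^2(0,\infty;V')$ thanks to the assumption $\by \in L^2(0,\infty;Y)$ and the continuous embedding $Y \hookrightarrow V'$ of the Gelfand triple. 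With this notation the smallness hypothesis reads exactly $\gamma = \|\by_0\|_Y + \|\bg\|_{L^2(0,\infty;V')} \le \tfrac{1}{4CM_\lambda^2}$.

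Second, I would invoke Lemma \ref{lem:eq:non_loc_sol} with $A_s = A_\lambda$ and the \emph{fixed} forcing $\bg$. This is legitimate: $A_\lambda$ generates an exponentially stable analytic semigroup on $Y$ by Consequence \ref{cons:C2}, and it satisfies an inequality of the form \eqref{eq:VY-coercivity}, since for $\bv \in V$ one has $\langle A_\lambda \bv, \bv\rangle_Y = \langle A\bv,\bv\rangle_Y - \lambda\|\bv\|_Y^2 \le -\theta\|\bv\|_V^2$. The relevant linear constant is $M_s = M_\lambda$ by \eqref{eq:reg_est_shift}, and we have arranged $M_\lambda \ge \tfrac{1}{2C}$ as required. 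Since the smallness condition is met, the lemma produces a \emph{unique} $\tilde{\by} \in W_\infty$ solving $\dot{\tilde{\by}} = A_\lambda \tilde{\by} - F(\tilde{\by}) + \bg$ with $\tilde{\by}(0) = \by_0$, together with $\|\tilde{\by}\|_{W_\infty} \le 2M_\lambda\gamma$. Note that the fixed-point argument of Lemma \ref{lem:eq:non_loc_sol} only treats the nonlinearity $F$; the forcing $\bg$ is frozen, so the fact that $\bg$ was built from the given $\by$ causes no circularity at this stage.

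Third, I would identify $\by$ with $\tilde{\by}$. By construction $\by$ and $\tilde{\by}$ solve the \emph{same} equation with the same initial datum, so it suffices to prove they coincide. The one delicate point is that $\tilde{\by} \in W_\infty$, whereas $\by$ is a priori only a solution in $L^2(0,\infty;Y)$, i.e.\ in $W(0,T)$ on each finite interval. I would therefore re-run, for $\be := \by - \tilde{\by}$, the Gronwall uniqueness computation at the end of the proof of Lemma \ref{lem:eq:non_loc_sol}: that computation requires only $\tilde{\by} \in W_\infty$ and $\by \in W(0,T)$ on each $[0,T]$, because the coefficient $\|\by\|_Y^2\|\by\|_V^2 + \|\tilde{\by}\|_Y^2\|\tilde{\by}\|_V^2$ multiplying $\|\be\|_Y^2$ is then integrable on every $[0,T]$. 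With $\be(0)=0$, Gronwall's inequality forces $\be \equiv 0$ on each $[0,T]$, hence $\by = \tilde{\by}$ for all $t \ge 0$. Consequently $\by = \tilde{\by} \in W_\infty$ with $\|\by\|_{W_\infty} \le 2M_\lambda\gamma$.

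The main obstacle is precisely the circularity in the nonlinear term: to bound $F(\by)$ globally in $L^2(0,\infty;V')$ via Corollary \ref{cor:estimates_oseen_time_var} one already needs $\by \in W_\infty$, which is the very conclusion sought. The device above sidesteps this by never estimating $F(\by)$ for the given $\by$ directly; instead it constructs a genuine $W_\infty$ solution abstractly through the contraction of Lemma \ref{lem:eq:non_loc_sol} and then matches it to $\by$ by a uniqueness argument needing only local regularity of $\by$. The only point warranting care in writing out the details is confirming that the notion of ``solution'' in the hypothesis indeed supplies this local $W(0,T)$ regularity, which is what makes the Gronwall comparison against $\tilde{\by}$ admissible.
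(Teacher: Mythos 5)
Your proposal is correct and follows essentially the same route as the paper: the paper's proof simply applies Lemma \ref{lem:eq:non_loc_sol} to the equivalent system $\dot{\by} = (A-\lambda I)\by - F(\by) + \tilde{\bff}$ with the frozen forcing $\tilde{\bff} = \bff + \lambda\by + Bu$, exactly as you do. Your third step (identifying the given $L^2(0,\infty;Y)$-solution with the $W_\infty$-solution produced by the lemma via a local Gronwall argument) makes explicit a detail the paper leaves implicit, and is a welcome addition rather than a deviation.
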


\begin{proof}
Since $\by \in L^2(0,\infty;Y)$, we can apply Lemma \ref{lem:eq:non_loc_sol} to the equivalent system
  \begin{align*}
   \dot{\by} = (A-\lambda I)\by- F(\by) +\tilde{\bff},
  \end{align*}
where $\tilde{\bff} = \bff + \lambda \by + Bu$. This shows the assertion.
\end{proof}

\section{Differentiability of the value function}

In this section we perform a sensitivity analysis for the stabilization problem.
The main purpose is to analyze the dependence of solutions to \eqref{eq:NLQprob} with respect to the initial condition $\by_{0}$ and to show the differentiability of the associated \emph{value function}, defined by
\begin{align*}
\mV(\by_{0}) = \inf_{\begin{subarray}{c} \by \in W_\infty \\ u \in L^2(0,\infty;U) \end{subarray}} J(\by,u), \quad \text{subject to: } e(\by,u)= (0,\by_0).
\end{align*}

\subsection{Existence of a solution and optimality conditions}

In Lemma \ref{lem:ex_opt_sol} we prove the existence of a solution $(\bar{\by},u)$ to problem \eqref{eq:NLQprob}, assuming that $\| \by_0 \|_Y$ is sufficiently small. We derive then in Proposition \ref{proposition:optiCondWeak} first-order necessary optimality conditions.

\begin{lemma}\label{lem:ex_opt_sol}
There exists $\delta_{1} >0 $  such that for all $\by_0 \in B_Y(\delta_{1})$, problem \eqref{eq:NLQprob} possesses a solution $(\bar{\by},\bar{u})$.
Moreover, there exists a constant $M> 0$ independent of $\by_{0}$ such that
\begin{align}\label{eq:sol_est_NLQ}
\max( \| \bar{u} \| _{L^2(0,\infty;U)},\| \bar{\by}\|_{W_\infty}) \le M \|\by_{0}\|_{Y}.
\end{align}
\end{lemma}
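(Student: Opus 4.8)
The plan is to use the direct method of the calculus of variations, relying on the local well-posedness and regularity results established above to control a minimizing sequence.

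First I would settle feasibility and obtain an upper bound on the value. Choosing $\delta_1 \leq \frac{1}{4CM_K^2}$ and applying Corollary \ref{cor:feas_control} with $\bff = 0$, for every $\by_0 \in B_Y(\delta_1)$ there is an admissible pair $(\by,u)$ with $\|\by\|_{W_\infty} \leq 2M_K \|\by_0\|_Y$ and $\|u\|_{L^2(0,\infty;U)} \leq 2\|K\|_{\mathcal{L}(Y)} M_K \|\by_0\|_Y$. Since $\|\by\|_{L^2(0,\infty;Y)} \leq M \|\by\|_{W_\infty}$ by the continuous embedding $V \hookrightarrow Y$, evaluating the cost at this pair gives $\mV(\by_0) \leq J(\by,u) \leq M \|\by_0\|_Y^2$. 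In particular the problem is feasible and the infimum is finite.

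Next I would take a minimizing sequence $(\by_n,u_n)$. The bound on $J(\by_n,u_n)$ yields uniform bounds on $\|\by_n\|_{L^2(0,\infty;Y)}$ and $\|u_n\|_{L^2(0,\infty;U)}$, both of order $\|\by_0\|_Y$. To upgrade the state bound to $W_\infty$ I would invoke Corollary \ref{cor:more_regularity}: with $\bff = 0$, the relevant quantity $\gamma = \|\by_0\|_Y + \|\lambda \by_n + Bu_n\|_{L^2(0,\infty;V')}$ is controlled by $\|\by_0\|_Y$, using $Y \hookrightarrow V'$, $B \in \mathcal{L}(U,Y)$, and the $L^2$ bounds just obtained. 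Shrinking $\delta_1$ so that $\gamma \leq \frac{1}{4CM_\lambda^2}$, this gives $\|\by_n\|_{W_\infty} \leq 2M_\lambda \gamma \leq M\|\by_0\|_Y$ uniformly in $n$. I then extract subsequences with $u_n \rightharpoonup \bar{u}$ in $L^2(0,\infty;U)$ and $\by_n \rightharpoonup \bar{\by}$ in $W_\infty$.

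The heart of the argument, and the main obstacle, is passing to the limit in the nonlinear term $F(\by_n) = N(\by_n,\by_n)$, where weak convergence alone is insufficient. I would use the compact embedding $V \hookrightarrow Y$ together with the Aubin--Lions lemma to obtain, on every bounded interval $(0,T)$, strong convergence $\by_n \to \bar{\by}$ in $L^2(0,T;Y)$. Writing $N(\by_n,\by_n) - N(\bar{\by},\bar{\by}) = N(\by_n - \bar{\by}, \by_n) + N(\bar{\by}, \by_n - \bar{\by})$ and testing against $\bw \in L^2(0,T;V)$, the estimates of Lemma \ref{lem:estimates_oseen_time_var} combine this strong $L^2(0,T;Y)$ convergence with the uniform $L^2(0,T;V)$ bound to send both terms to zero; the linear terms pass to the limit by weak convergence. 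As this holds for every $T$, the limit $(\bar{\by},\bar{u})$ solves the state equation on $(0,\infty)$ with $\bar{\by}(0)=\by_0$, hence is feasible. Finally, $J$ being a sum of squared Hilbert-space norms is convex and strongly continuous, therefore weakly lower semicontinuous, so $J(\bar{\by},\bar{u}) \leq \liminf_n J(\by_n,u_n) = \mV(\by_0)$, which proves optimality. The estimate \eqref{eq:sol_est_NLQ} then follows by reading $\|\bar{u}\|_{L^2(0,\infty;U)}$ off the bound $J(\bar{\by},\bar{u}) \leq M\|\by_0\|_Y^2$ and applying Corollary \ref{cor:more_regularity} once more to the optimal state $\bar{\by}$.
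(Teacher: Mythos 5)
Your proposal is correct and follows essentially the same route as the paper: feasibility via Corollary \ref{cor:feas_control}, the upgrade of the minimizing sequence to a uniform $W_\infty$ bound via Corollary \ref{cor:more_regularity}, Aubin--Lions for the nonlinear term, and weak lower semicontinuity for optimality. The one loose point is the test-function space for the nonlinearity: with $\bw$ only in $L^2(0,T;V)$, the applicable estimate of Lemma \ref{lem:estimates_oseen_time_var} involves $\| \by_n - \bar{\by} \|_{L^\infty(0,T;Y)}$, which Aubin--Lions does not control; you need $\bw \in L^\infty(0,T;V)$ to invoke the second estimate, which is why the paper tests against $H^{1}(0,T;V)$ and concludes by density in $L^{2}(0,T;V)$.
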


\begin{proof}
Let us set, for the moment, $\delta_{1} = \frac{1}{4CM_K^2}$, where $C$ is as in Lemma \ref{lemma:Lipschitz_F} and $M_K$ denotes the constant from Corollary \ref{cor:feas_control}. Applying this corollary (with $\bff=0$), we obtain that for $\by_0 \in B_Y(\delta_1)$, there exists a control $u \in L^2(0,\infty;U)$ with associated state $\by$ satisfying
\begin{align*}
\max(  \| u \|_{L^2(0,\infty;U)},\|\by \|_{W_\infty} ) \leq  M\| \by_{0} \|_{Y},
\end{align*}
where $M=2 M_K \max(1, \|K\|_{\mathcal{L}(Y)})$.
We can thus consider a minimizing sequence $(\by_n,u_n)_{n \in \mathbb N}$ with $J(\by_n,u_n) \leq M^2 \| \by_0\|_Y^2 (1+\alpha)$. We therefore have for all $n \in \mathbb{N}$ that
\begin{equation*}
\| \by_n \|_{L^2(0,\infty;Y)} \leq \sqrt{2}M \|\by_0\|_Y \sqrt{1+ \alpha} \quad \text{and} \quad
\| u_n \|_{L^2(0,\infty;U)} \leq \sqrt{2} M \|\by_0\|_Y \frac{\sqrt{1+ \alpha}}{\sqrt{\alpha}}.
\end{equation*}
Possibly after further reduction of $\delta_{1}$, we eventually obtain that
\begin{align*}
\| \by_0\|_Y + \| \lambda \by_n +Bu_n\|_{L^2(0,\infty;Y)}\leq
\left[ 1 + M \sqrt{2(1 + \alpha)} \left( \lambda + \frac{\| B \|_{\mathcal{L}(U,Y)}}{\sqrt{\alpha}} \right) \right] \delta_1
\leq \frac{1}{4CM_\lambda^2},
\end{align*}
where $M_\lambda$ is as in Corollary \ref{cor:more_regularity}. It then follows that the sequence $(\by_n)_{n \in \mathbb N}$ is bounded in $W_\infty$ with $\sup_{n\in \mathbb N} \| \by_n \|_{W_{\infty}} \leq 2M_\lambda \|\by_{0}\|_Y$. Extracting if necessary a subsequence, there exists $(\bar{\by},\bar{u}) \in W_\infty \times L^2(0,\infty;U)$ such that $(\by_n,u_n) \rightharpoonup (\bar{\by},\bar{u}) \in W_\infty \times L^2(0,\infty;U)$, and $(\bar{\by},\bar{u})$ satisfies \eqref{eq:sol_est_NLQ}.

Let us prove that $(\bar{\by},\bar{u})$ is feasible and optimal.
For any $T>0$ let us consider an arbitrary $\bz \in H^{1}(0,T;V)$. For all $n \in \mathbb N$, we have
\begin{align}\label{eq:limit}
 \int_0^T \left\langle \dot{\by}_n(t),\bz(t) \right\rangle _{V',V} \mathrm{d}t =
 \int_0^T \langle A\by _n (t) - F(\by_n(t))+B u_n(t),\bz(t) \rangle _{V',V}\, \mathrm{d}t.
\end{align}
Since $\dot{\by}_n \rightharpoonup \dot{\bar{\by}}$ in $L^2(0,T;V')$, we can pass to the limit in the l.h.s.\@ of the above equality. Moreover, since $A\by_n \rightharpoonup A\bar{\by} \in L^2(0,T;V')$,
\begin{align*}
 \int_0^T \langle A\by _n (t), \bz(t) \rangle _{V',V} \mathrm{d}t \underset{n \to
\infty}{\longrightarrow}\int_0^T \langle A \bar{\by}(t),\bz(t) \rangle_{V',V}\, \mathrm{d}t.
\end{align*}
Analogously, we obtain that
\begin{align*}
 \int_0^T \langle Bu _n (t), \bz(t) \rangle _{V',V} \mathrm{d}t \underset{n \to
\infty}{\longrightarrow}\int_0^T \langle B\bar{u}(t),\bz(t) \rangle_{V',V}\, \mathrm{d}t.
\end{align*}
We also have
\begin{align*}
&   \left| \int _0^T \langle F(\by_n(t))-F(\bar{\by}(t)),\bz(t) \rangle _{V',V} \, \mathrm{d}t \right| \\
&\qquad  =\left| \int_0^T \langle   N(\by_n(t),\by_n(t))-N(\bar{\by}(t),\bar{\by}(t)), \bz(t) \rangle_{V',V}  \, \mathrm{d}t \right| .
\end{align*}
By Lemma \ref{lem:estimates_oseen_time_var}, it then follows that
\begin{align*}
	&\left| \int _0^T \langle F(\by_n(t))-F(\bar{\by}(t)),\bz(t) \rangle _{V',V} \, \mathrm{d}t \right| \\
	&\quad \le M  \| \bz \|_{L^{\infty}(0,T;V)}   \| \by_n\|_{L^{2}(0,T;Y)}^{\frac{1}{2}} \, \| \by_n - \bar{\by} \|_{L^{2}(0,T;Y)}^{\frac{1}{2}}\,   \| \by_n\|_{L^{2}(0,T;V)}^{\frac{1}{2}} \, \| \by_n - \bar{\by} \|_{L^{2}(0,T;V)}^{\frac{1}{2}}  \\
	 &\qquad +M  \| \bz \|_{L^{\infty}(0,T;V)}   \| \bar{\by}\|_{L^{2}(0,T;Y)}^{\frac{1}{2}} \, \| \by_n - \bar{\by} \|_{L^{2}(0,T;Y)}^{\frac{1}{2}}\,   \| \bar{\by}\|_{L^{2}(0,T;V)}^{\frac{1}{2}} \, \| \by_n - \bar{\by} \|_{L^{2}(0,T;V)}^{\frac{1}{2}}.
\end{align*}
Since $V$ is compactly embedded in $Y$, we obtain that $\| \by_n -\bar{\by}\|_{L^2(0,T;Y)} \underset{n \to \infty}{\longrightarrow} 0$ with the Aubin-Lions lemma. We can pass to the limit in \eqref{eq:limit} and obtain
\begin{align*}
\int_0^T \left\langle \dot{\bar{\by}}(t),\bz(t) \right\rangle _{V',V}\mathrm{d}t= \int_0^T \langle A\bar{\by}  (t) - F(\bar{\by}(t))+B \bar{u}(t),\bz(t) \rangle _{V',V}\, \mathrm{d}t.
\end{align*}
Density of $H^{1}(0,T;V)$ in $L^{2}(0,T;V)$ implies that $e(\bar{\by},\bar{u})=(0,\by_0)$. Finally, by weak lower semi-continuity of norms it follows that $J(\bar{\by},\bar{u}) \leq \liminf_{n \to \infty} J(\by_n,u_n)$, which proves the optimality of $(\bar{\by},\bar{u})$.

Consider now an arbitrary solution $(\tilde{\by},\tilde{u})$ to \eqref{eq:NLQprob}. It then holds that $J(\tilde{\by},\tilde{u})\le M^2\|\by_0\|_Y^2 (1+\alpha)$ from which we obtain that
\begin{align*}
\| \tilde{\by} \|_{L^2(0,\infty;Y)} \le \sqrt{2}M \|\by_0\|_Y \sqrt{1+\alpha} \quad \text{and} \quad \| \tilde{u} \|_{L^2(0,\infty;U)} \le \sqrt{2}M\|\by_0\|_Y \frac{\sqrt{1+\alpha}}{\sqrt{\alpha}}.
\end{align*}
The estimate \eqref{eq:sol_est_NLQ} for $\| \tilde{\by}\|_{W_\infty}$ can now be shown by applying the same arguments as above.
\end{proof}

For the derivation of the optimality system for \eqref{eq:NLQprob} we need the following technical lemma.

\begin{lemma}(\cite[Lemma 2.5]{BreKP18})\label{lem:contractive_pert}
   Let $G \in \mathcal{L}(W_\infty ,L^2(0,\infty;V'))$ be such that $\| G\| < \frac{1}{M_K},$ where $\|G\|$ denotes the operator norm of $G$. Then, for all  $\bff \in L^2(0,\infty;V')$ and $\by_0\in Y$, there exists a unique solution to the following system:
  \begin{align*}
    \dot{\by} = (A-BK)\by(t) + (G\by)(t) + \bff(t), \ \ \by(0)=\by_0.
  \end{align*}
Moreover,
\begin{align*}
  \| \by \| _{W_\infty} \le \frac{M_K}{1-M_K \| G\|} ( \| \bff \| _{L^2(0,\infty;V')} + \| \by_0 \| _Y).
\end{align*}
\end{lemma}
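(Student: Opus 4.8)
The plan is to solve the equation by a Banach fixed-point argument, viewing the term $G\by$ as a contractive perturbation of the exponentially stable linear dynamics generated by $A-BK$. The crucial ingredient is the linear a priori estimate that underlies the very definition of $M_K$: by Assumption \ref{ass:A2} and the linear part of the argument in Lemma \ref{lem:eq:non_loc_sol} invoked in Corollary \ref{cor:feas_control} (i.e.\ estimate \eqref{eq:reg_est_stabsys} with $A_s=A-BK$), for every $\bg \in L^2(0,\infty;V')$ and every $\bz_0 \in Y$ the system $\dot{\bz}=(A-BK)\bz+\bg$, $\bz(0)=\bz_0$, admits a unique solution $\bz \in W_\infty$ satisfying $\| \bz \|_{W_\infty} \le M_K (\| \bz_0 \|_Y + \| \bg \|_{L^2(0,\infty;V')})$.

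First I would define the affine map $\mathcal{Z}\colon W_\infty \to W_\infty$ that sends $\bu \in W_\infty$ to the solution $\bz=\mathcal{Z}(\bu)$ of $\dot{\bz}=(A-BK)\bz+(G\bu)+\bff$, $\bz(0)=\by_0$. This is well defined because $G\bu+\bff \in L^2(0,\infty;V')$ (using $G \in \mathcal{L}(W_\infty,L^2(0,\infty;V'))$ and $\bff \in L^2(0,\infty;V')$), so the linear estimate above applies. A fixed point of $\mathcal{Z}$ is exactly a solution of the stated system, and conversely. To show $\mathcal{Z}$ is a contraction, take $\bu_1,\bu_2 \in W_\infty$; then $\bz:=\mathcal{Z}(\bu_1)-\mathcal{Z}(\bu_2)$ solves $\dot{\bz}=(A-BK)\bz+G(\bu_1-\bu_2)$ with $\bz(0)=0$, whence by the linear estimate $\| \mathcal{Z}(\bu_1)-\mathcal{Z}(\bu_2) \|_{W_\infty} \le M_K \| G(\bu_1-\bu_2) \|_{L^2(0,\infty;V')} \le M_K \| G \| \, \| \bu_1-\bu_2 \|_{W_\infty}$. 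Since $M_K \| G \| < 1$ by hypothesis, $\mathcal{Z}$ is a contraction on the Banach space $W_\infty$, and the Banach fixed-point theorem yields a unique $\by \in W_\infty$ with $\mathcal{Z}(\by)=\by$, giving existence and uniqueness.

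For the quantitative bound I would apply the linear estimate to the fixed point itself: from $\by=\mathcal{Z}(\by)$ we get $\| \by \|_{W_\infty} \le M_K ( \| \by_0 \|_Y + \| G\by + \bff \|_{L^2(0,\infty;V')} ) \le M_K ( \| \by_0 \|_Y + \| G \| \, \| \by \|_{W_\infty} + \| \bff \|_{L^2(0,\infty;V')} )$. Absorbing the term $M_K \| G \| \, \| \by \|_{W_\infty}$ on the left and dividing by $1-M_K \| G \|>0$ produces precisely $\| \by \|_{W_\infty} \le \frac{M_K}{1-M_K\| G \|}(\| \bff \|_{L^2(0,\infty;V')} + \| \by_0 \|_Y)$.

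I do not expect a genuine obstacle here; the argument is a textbook Neumann-series/contraction perturbation. The only points requiring care are bookkeeping ones: confirming that the solution operator for $A-BK$ maps $Y \times L^2(0,\infty;V')$ boundedly into $W_\infty$ with exactly the constant $M_K$ (so that the contraction condition reads $M_K\|G\|<1$ rather than involving a different constant), and checking that $\mathcal{Z}$ indeed maps all of $W_\infty$ into $W_\infty$ so that the fixed-point theorem applies on the full space without needing an invariant ball. Both are handled directly by \eqref{eq:reg_est_stabsys}.
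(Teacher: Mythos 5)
The paper gives no proof of this lemma: it is imported verbatim from \cite[Lemma 2.5]{BreKP18}, so there is nothing internal to compare against. Your contraction argument is correct and is the standard (and, in the cited reference, essentially the actual) proof: you correctly identify $M_K$ as the norm of the solution operator $(\by_0,\bg)\mapsto\bz$ for $\dot{\bz}=(A-BK)\bz+\bg$ (i.e.\ estimate \eqref{eq:reg_est_stabsys} with $A_s=A-BK$, which is exactly how $M_K$ enters Corollary \ref{cor:feas_control}), the affine map $\mathcal{Z}$ is then a contraction on all of $W_\infty$ with constant $M_K\|G\|<1$, and the a priori bound follows by absorbing $M_K\|G\|\,\|\by\|_{W_\infty}$ into the left-hand side. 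The one bookkeeping point worth making explicit is that uniqueness of the solution in $W_\infty$ follows because any $W_\infty$-solution of the perturbed system is necessarily a fixed point of $\mathcal{Z}$, which the Banach fixed-point theorem shows is unique; you state the equivalence, so this is covered.
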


First-order optimality conditions for finite-horizon  optimal control problems have been addressed several times in the literature, we mention e.g. \cite{AT90,MH00,HinK01,IR98}. The  finite-horizon case, and in particular the decay properties
of the state, the costate, and the optimal control,
 require independent treatment, which we provide next.  For an analysis of the linear infinite-horizon problem, we additionally refer to \cite{Ray06}.

\begin{proposition} \label{proposition:optiCondWeak}
There exists $\delta_{2} \in (0,\delta_1]$ such that for all $\by_0 \in B_Y(\delta_2)$, for all solutions $(\bar{\by},\bar{u})$ of \eqref{eq:NLQprob}, there exists a unique costate $\bp \in L^2(0,\infty;V)$ satisfying
\begin{align} \label{eq:costate_NLQ}
-\dot{\bp} - A^{*}\bp - (\bar{\by}\cdot \nabla)\bp+(\nabla \bar{\by})^T \bp  = \ & \bar{\by} \quad \text{(in $(W^0_\infty)'$)}, \\
\label{eq:control_NLQ}
\alpha \bar{u} + B^{*} \bp = \ & 0.
\end{align}
Moreover, there exists a constant $M>0$, independent of $(\bar{\by},\bar{u})$, such that
\begin{equation} \label{eq:estimCostate2}
\| \bp \|_{L^2(0,\infty;V)} \le M \|\by_0\|_Y.
\end{equation}
\end{proposition}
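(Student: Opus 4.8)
The plan is to realize the costate $\bp$ as the Lagrange multiplier associated with the state constraint in \eqref{eq:NLQprob}, and then to translate the resulting abstract multiplier equation into the adjoint system \eqref{eq:costate_NLQ}--\eqref{eq:control_NLQ}. First I would record that $e$ is of class $C^1$ (in fact $C^\infty$): its affine part $(\by,u) \mapsto (\dot\by - A\by - Bu,\by(0))$ is bounded linear by Consequence \ref{cons:C2}, while $F(\by)=N(\by,\by)$ is a bounded quadratic map from $W_\infty$ into $L^2(0,\infty;V')$ by Corollary \ref{cor:estimates_oseen_time_var}, with derivative $F'(\bar\by)\bw = A_0(\bar\by,\bw)$. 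Hence $e'(\bar\by,\bar u)(\bw,\bv) = (\dot\bw - A\bw + A_0(\bar\by,\bw) - B\bv,\ \bw(0))$.

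The decisive step is the constraint qualification, namely surjectivity of $e'(\bar\by,\bar u)$ onto $L^2(0,\infty;V')\times Y$. Given a target $(\bff,\by_1)$, I would seek a preimage of the feedback form $\bv=-K\bw$, which recasts the linearized equation as $\dot\bw = (A-BK)\bw - A_0(\bar\by,\bw) + \bff$, $\bw(0)=\by_1$. The perturbation $G\bw := -A_0(\bar\by,\bw)$ obeys $\|G\|_{\mathcal{L}(W_\infty,L^2(0,\infty;V'))} \le 2M\|\bar\by\|_{W_\infty}$ by Corollary \ref{cor:estimates_oseen_time_var}, and $\|\bar\by\|_{W_\infty}\le 2M_\lambda\|\by_0\|_Y$ by Lemma \ref{lem:ex_opt_sol}. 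Choosing $\delta_2 \in (0,\delta_1]$ so small that $4MM_\lambda\delta_2 < 1/M_K$, Lemma \ref{lem:contractive_pert} produces a unique $\bw \in W_\infty$, so that $(\bw,-K\bw)$ is a preimage; thus $e'(\bar\by,\bar u)$ is onto and admits a right inverse $R$ bounded uniformly over $\by_0 \in B_Y(\delta_2)$.

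Surjectivity of $e'(\bar\by,\bar u)$ is the regularity hypothesis of the Lagrange multiplier theorem for equality-constrained minimization in Banach spaces; since $(\bar\by,\bar u)$ solves \eqref{eq:NLQprob}, there is a unique multiplier $(\bp,\bp_0) \in (L^2(0,\infty;V')\times Y)^{*} = L^2(0,\infty;V)\times Y$ with $J'(\bar\by,\bar u) = (\bp,\bp_0)\circ e'(\bar\by,\bar u)$, which already yields $\bp \in L^2(0,\infty;V)$ and uniqueness (as $e'^{*}$ is injective). Testing this identity with $(0,\bv)$ gives $\alpha\bar u + B^*\bp = 0$, i.e. \eqref{eq:control_NLQ}. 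Testing with $(\bw,0)$, $\bw \in W^0_\infty$, gives $\langle\bar\by,\bw\rangle_{L^2(0,\infty;Y)} - \langle \bp,\dot\bw - A\bw + A_0(\bar\by,\bw)\rangle = 0$; using $\langle\bp,A\bw\rangle = \langle A^*\bp,\bw\rangle$, the skew-symmetry $s(\bar\by,\bw,\bp) = -s(\bar\by,\bp,\bw)$ of Proposition \ref{prop:estimates_oseen}(ii), and the resulting identity $\langle\bp,A_0(\bar\by,\bw)\rangle = -\langle(\bar\by\cdot\nabla)\bp,\bw\rangle + \langle(\nabla\bar\by)^T\bp,\bw\rangle$, this is exactly the weak form of \eqref{eq:costate_NLQ}, the time derivative being understood by duality in $(W^0_\infty)'$ so that no trace of $\bp$ is required.

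Finally, pairing the multiplier identity with $R(\bff,\by_1)$ gives $\langle(\bp,\bp_0),(\bff,\by_1)\rangle = J'(\bar\by,\bar u)\,R(\bff,\by_1)$, whence $\|\bp\|_{L^2(0,\infty;V)} \le \|(\bp,\bp_0)\| \le \|R\|\,\|J'(\bar\by,\bar u)\|$; since $\|J'(\bar\by,\bar u)\| \le M(\|\bar\by\|_{L^2(0,\infty;Y)} + \alpha\|\bar u\|_{L^2(0,\infty;U)}) \le M\|\by_0\|_Y$ by Lemma \ref{lem:ex_opt_sol} and $\|R\|$ is uniform over $B_Y(\delta_2)$, estimate \eqref{eq:estimCostate2} follows. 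The hard part will be the uniform surjectivity of $e'$ together with the clean identification of the weak multiplier relation with the adjoint PDE in $(W^0_\infty)'$ --- in particular justifying the integration by parts in time over the half-line with vanishing boundary contribution and the decay built into $\bp \in L^2(0,\infty;V)$, which is precisely what the space $(W^0_\infty)'$ and the auxiliary multiplier $\bp_0$ are there to accommodate.
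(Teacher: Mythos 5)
Your proposal is correct and follows essentially the same route as the paper: Fr\'echet differentiability of $e$, surjectivity of $De(\bar{\by},\bar{u})$ obtained by closing the linearized equation with the feedback $v=-K\bz$ and invoking Lemma \ref{lem:contractive_pert} (after shrinking $\delta_2$ so that the perturbation $G\bz = A_0(\bar{\by},\bz)$ has norm below $1/M_K$), then Lagrange multiplier theory, identification of the two optimality relations by testing with $(0,v)$ and $(\bz,0)$, and the duality argument against a uniformly bounded right inverse to get $\|\bp\|_{L^2(0,\infty;V)}\le M\|\by_0\|_Y$. The only cosmetic difference is that you carry out the integration by parts on the convection terms via Proposition \ref{prop:estimates_oseen}(ii) explicitly, whereas the paper leaves \eqref{eq:costate_NLQ} as a formal expression for the weak identity of Remark \ref{rem:p_weak_form}.
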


\begin{remark}\label{rem:p_weak_form}
Note that \eqref{eq:costate_NLQ} is a formal expression for
\begin{equation}\label{eq:p_weak_form}
\begin{aligned}
 &\langle -\dot{\bp} - A^{*}\bp - (\bar{\by}\cdot \nabla)\bp+(\nabla \bar{\by})^T \bp  - \bar{\by} , \mathbf{z} \rangle _{(W^0_\infty)',W^0_\infty} \\
 &\quad  = \langle \bp,\dot{\bz}-A\bz+
 (\bz \cdot \nabla)\bar{\by} + (\bar{\by}\cdot \nabla)\bz \rangle_{L^2(0,\infty;V),L^2(0,\infty;V')} - \langle \bar{\by},\bz \rangle_{L^2(0,\infty;Y)} , \ \ \forall \bz \in W_\infty^0,
\end{aligned}
\end{equation}
where $W_\infty^0:= \{ \bz \in W_\infty \,|\, \bz(0)= 0 \}$.
\end{remark}

\begin{proof}[Proof of Proposition \ref{proposition:optiCondWeak}]
Let us set $\delta_2= \delta_1$. By Lemma \ref{lem:ex_opt_sol}, problem \eqref{eq:NLQprob} has a solution $(\bar{\by},\bar{u})$. In the first part of the proof, we derive abstract optimality conditions, by proving that the mapping $e$ (used for formulating the constraints) has a surjective derivative.
For proving the differentiability of $e$, we only need to consider the nonlinear term. We have $F(\by)= N(\by,\by)$ and we know that $N$ is a bounded bilinear mapping from $W_\infty \times W_\infty$ to $L^2(0,\infty;V')$, by Lemma \ref{lem:estimates_oseen_time_var}. Thus $N$ and $F$ are Fr\'echet differentiable, and so is $e$, with
\begin{align*}
 &De(\by,u)\colon W_\infty \times L^2(0,\infty;U) \to L^2(0,\infty;V')\times Y\\
 &De(\by,u)(\bz,v)=(\dot{\bz}-(A\bz-N(\by,\bz)-N(\bz,\by)+Bv),\bz(0)).\end{align*}
Let us show that $De(\bar{\by},\bar{u})$ is surjective if $\delta_{2}$ is sufficiently small.
Let $(\mathbf{r},\mathbf{s})\in L^2(0,\infty;V') \times Y$ and consider the system
\begin{align*}
\dot{\bz} - (A\bz-N(\bar{\by},\bz)-N(\bz,\bar{\by})+Bv) &= \mathbf{r}, \ \
\bz(0)  = \mathbf{s}.
\end{align*}
Observe that by Corollary \ref{cor:estimates_oseen_time_var}
\begin{align*}
 \| N(\bar{\by},\bz)+N(\bz,\bar{\by})\|_{L^2(0,\infty;V')} &\le   M \| \bar{\by}\|_{W_{\infty}}\, \| \bz\|_{W_{\infty}} .
\end{align*}
By Lemma \ref{lem:ex_opt_sol}, it further holds that
\begin{align} \label{eq:estimate_g_aux}
  \| N(\bar{\by},\bz)+N(\bz,\bar{\by})\|_{L^2(0,\infty;V')} \le M \delta_{2} \| \bz\|_{W_\infty}.
\end{align}
For sufficiently small $\delta_{2}$, the operator $G\in \mathcal{L}(W_\infty,L^2(0,\infty;V'))$ defined by
\begin{align}\label{eq:G_aux}
 (G\bz)(t) := D F(\bar \by(t))(\bz(t))= N(\bar{\by}(t),\bz(t))+N(\bz(t),\bar{\by}(t))
\end{align}
satisfies $\|G\| \leq \frac{1}{2M_K} < \frac{1}{M_K}$. By Lemma \ref{lem:contractive_pert} there exists a unique solution $\bz \in W_\infty$  to  the system
\begin{align*}
 \dot{\bz}- (A-BK)\bz+N(\bar{\by},\bz)+N(\bz,\bar{\by}) &= \mathbf{r}, \ \
\bz(0)  = \mathbf{s}.
\end{align*}
Setting $v=-K\bz\in L^2(0,\infty;U)$ proves the surjectivity of $De(\bar{\by},\bar{u})$. Note that
\begin{equation} \label{eq:bound_surjectivity}
\| \bz \|_{W_\infty} \leq M ( \| \mathbf{r} \|_{L^2(0,\infty;V')} + \| \mathbf{s} \|_{L^2(0,\infty;V')} ),
\end{equation}
for some constant $M$ independent of $(\mathbf{r},\mathbf{s})$ and $\by_0$.

From the surjectivity of $De(\bar{\by},\bar{u})$ and Lagrange multiplier theory it follows  that there exists a unique pair $(\bp,\mu)\in L^2(0,\infty;V)\times Y$ such that for all $(\bz,v)\in W_\infty\times L^2(0,\infty;U)$,
\begin{equation}\label{eq:nonlin_aux1}
DJ(\bar{\by},\bar{u})(\bz,v) - \langle (\bp,\mu), De(\bar{\by},\bar{u})(\bz,v) \rangle_{L^2(0,\infty;V)\times Y,L^2(0,\infty;V')\times Y}=0.
\end{equation}
Using \eqref{eq:nonlin_aux1} we derive in the second part of the proof the costate equation \eqref{eq:costate_NLQ} and relation \eqref{eq:control_NLQ}.
As can be easily verified, $J$ is differentiable with
\begin{equation}\label{eq:nonlin_aux2}
 DJ(\bar{\by},\bar{u})(\bz,v) = \langle \bar{\by},\bz \rangle_{L^2(0,\infty;Y)} + \alpha \langle \bar{u},v \rangle_{L^2(0,\infty;U)}
\end{equation}
Moreover, for all $(\bz,v)\in W_\infty  \times L^2(0,\infty;U)$
\begin{equation}\label{eq:nonlin_aux3}
\begin{aligned}
  &\langle (\bp,\mu),De(\bar{\by},\bar{u})(\bz,v) \rangle_{L^2(0,\infty;V)\times Y,L^2(0,\infty;V')\times Y} \\&\qquad = \langle \bp,\dot{\bz}\rangle_{L^2(0,\infty;V),L^{2}(0,\infty;V')} - \langle \bp,A \bz - G \bz \rangle _{L^2(0,\infty;V),L^{2}(0,\infty;V')} \\
  &\qquad \qquad- \langle \bp,Bv \rangle_{L^2(0,\infty;Y)}  + \langle \mu, \bz(0) \rangle_{Y}.
\end{aligned}
\end{equation}
Taking $\bz=0$ and letting $v$ vary in $L^2(0,\infty;U)$, we deduce from \eqref{eq:nonlin_aux1}, \eqref{eq:nonlin_aux2} and \eqref{eq:nonlin_aux3} that
\begin{align*}
 \alpha \bar{u} + B^{*} \bp = 0 \text{ in } L^2(0,\infty;U),
\end{align*}
which proves \eqref{eq:control_NLQ}.
Taking now $v=0$, we obtain that
\begin{equation}\label{eq:kk2}
 \langle \bp, \dot{\bz} \rangle_{L^2(0,\infty;V),L^2(0,\infty;V')} = \langle \bp,A \bz -G \bz\rangle_{L^2(0,\infty;V),L^{2}(0,\infty;V')} + \langle \bar \by, \bz\rangle_{L^2(0,\infty;Y)}, \ \ \forall \bz \in W_\infty^0.
\end{equation}

It remains to bound $\bp$ in $L^2(0,\infty;V)$. Let $\mathbf{r} \in L^2(0,\infty;V')$ and let $(\mathbf{z},v)$ satisfy $De(\bar{\by},\bar{u})(\mathbf{z},v)= (\mathbf{r},0)$ and the bound \eqref{eq:bound_surjectivity} (with $\mathbf{s}= 0$). Using the optimality condition \eqref{eq:nonlin_aux1}, the expression \eqref{eq:nonlin_aux2} of $DJ(\bar{\by},\bar{u})$, estimate \eqref{eq:bound_surjectivity}, and estimate \eqref{eq:sol_est_NLQ} on $(\bar{\by},\bar{u})$, we obtain the following inequalities:
\begin{align*}
&\langle \bp, \mathbf{r} \rangle_{L^2(0,\infty;V),L^2(0,\infty;V')}
=  \ \langle (\bp,\mu), (\mathbf{r},0) \rangle_{L^2(0,\infty;V) \times Y,L^2(0,\infty;V') \times Y} \\
&\quad =  \ \langle De(\bar{\by},\bar{u})'(\bp,\mu), (\bz,v) \rangle_{W_\infty' \times L^2(0,\infty;U),W_\infty \times L^2(0,\infty;U)} \\
  & \quad = \ DJ(\bar{\by},\bar{u})(\bz,v) \\
 &  \quad \leq\ M( \| \bar{\by} \|_{L^2(0,\infty;Y)} + \| \bar{u} \|_{L^2(0,\infty;U)} ) (\| \bz \|_{L^2(0,\infty;Y)} + \| v \|_{L^2(0,\infty;U)} ) \\
 & \quad \leq \ M \| \by_0 \|_{Y} \| \mathbf{r} \|_{L^2(0,\infty;V')}.
\end{align*}
Since $\mathbf{r}$ was arbitrary and since $M$ does not depend on $\mathbf{r}$, we obtain that $\| \bp \|_{L^2(0,\infty;V)} \leq M \| \by_0 \|_{Y}$.
\end{proof}

\subsection{Sensitivity analysis}

We define a mapping $\Phi$ via
\begin{equation}\label{eq:defPhi}
\begin{aligned}
\Phi\colon W_\infty \times  L^2(0,\infty;U) \times &L^2(0,\infty;V)\to  Y \times L^2(0,\infty;V') \times (W^0_\infty)' \times L^2(0,\infty;U)=:X, \\[1ex]
\Phi(\by,u,\bp)& =
\begin{pmatrix}
\by(0) \\ \dot{\by}- A\by + F(\by)-Bu \\ - \dot{\bp} - A^{*}\bp -(\by\cdot \nabla)\bp+(\nabla \by)^T \bp - \by \\
\alpha u + B^{*}\bp
\end{pmatrix},
\end{aligned}
\end{equation}
where the third line again has to be understood formally, see Remark \ref{rem:p_weak_form}.
We endow the space $X$ with the $l_\infty$-product norm.
The well-posedness of $\Phi$ follows from the considerations on $e(\by,u)$ and the costate equation \eqref{eq:costate_NLQ} that have been given in the proof of Proposition \ref{proposition:optiCondWeak}.

%

\begin{lemma} \label{lemma:inverseMappingWeak}
There exist $\delta_3 > 0$, $\delta_3'>0$, and three $C^\infty$-mappings
\begin{equation*}
\by_0 \in B_Y(\delta_3) \mapsto \big( \mathcal{Y}(\by_0),\mathcal{U}(\by_0),\mathcal{P}(\by_0) \big)
\in W_\infty \times L^2(0,\infty;U) \times L^2(0,\infty;V)
\end{equation*}
such that for all $\by_0 \in B_Y(\delta_3)$, the triplet $\big( \mathcal{Y}(\by_0),\mathcal{U}(\by_0),\mathcal{P}(\by_0) \big)$ is the unique solution to
\begin{equation}\label{eq:inverse_aux}
\Phi(\by,u,\bp)= (\by_0,0,0,0), \quad
\max \big( \| \by \|_{W_\infty}, \| u \|_{L^2(0,\infty;U)}, \| \bp \|_{L^2(0,\infty;V)} \big) \leq \delta_3'
\end{equation}
in $W_\infty \times L^2(0,\infty;U) \times L^2(0,\infty;V)$.
Moreover, there exists a constant $M>0$ such that for all $\by_0 \in B_Y(\delta_3)$,
\begin{equation} \label{eq:lipschitzReg}
\max \big( \| \mathcal{Y}(\by_0) \|_{W_\infty}, \| \mathcal{U}(\by_0) \|_{L^2(0,\infty;U)}, \| \mathcal{P}(\by_0) \|_{L^2(0,\infty;V)} \big) \leq M \| \by_0 \|_Y.
\end{equation}
\end{lemma}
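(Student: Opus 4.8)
The plan is to obtain $(\mathcal{Y},\mathcal{U},\mathcal{P})$ as the local inverse of $\Phi$ near the trivial solution, via the inverse function theorem in Banach spaces (as used in \cite{BreKP18}). First I would record that $\Phi(0,0,0)=(0,0,0,0)$, since $F(0)=0$ and the bilinear costate terms vanish at the origin. Next, $\Phi$ is of class $C^\infty$: it is the sum of a bounded affine part and finitely many bounded multilinear terms, namely $\by \mapsto F(\by)=N(\by,\by)$, bounded bilinear from $W_\infty$ into $L^2(0,\infty;V')$ by Corollary \ref{cor:estimates_oseen_time_var}, and $(\by,\bp)\mapsto -(\by\cdot\nabla)\bp+(\nabla\by)^T\bp$, bounded bilinear from $W_\infty\times L^2(0,\infty;V)$ into $(W^0_\infty)'$ by the estimates underlying Remark \ref{rem:p_weak_form}. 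Bounded multilinear maps are $C^\infty$, so the whole difficulty reduces to showing that $L:=D\Phi(0,0,0)$ is a bounded linear isomorphism of $W_\infty\times L^2(0,\infty;U)\times L^2(0,\infty;V)$ onto $X$.

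Computing the derivative at the origin (where all base-point-dependent contributions of the bilinear terms drop out), one finds
\[
L(\bz,v,\mathbf{q}) = \big( \bz(0),\ \dot{\bz}-A\bz-Bv,\ -\dot{\mathbf{q}}-A^*\mathbf{q}-\bz,\ \alpha v + B^*\mathbf{q} \big),
\]
which is exactly the first-order optimality system of the \emph{linear}-quadratic regulator associated with $(A,B)$ and the cost of \eqref{eq:NLQprob}; boundedness of $L$ is immediate from the trace theorem and the mapping properties of $A,A^*,B,B^*$. For injectivity I would take $(\bz,v,\mathbf{q})\in\ker L$ and note that the third equation forces $\dot{\mathbf{q}}\in L^2(0,\infty;V')$, so $\mathbf{q}\in W_\infty$ and both $\bz,\mathbf{q}$ decay at infinity. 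Substituting $v=-\alpha^{-1}B^*\mathbf{q}$ and differentiating $\langle\bz,\mathbf{q}\rangle_Y$ in time, the skew term $\langle A\bz,\mathbf{q}\rangle-\langle\bz,A^*\mathbf{q}\rangle$ cancels by the adjoint relation, and after integrating over $(0,\infty)$ with the vanishing boundary contributions ($\bz(0)=0$ and decay) one obtains $\int_0^\infty(\|\bz\|_Y^2+\alpha^{-1}\|B^*\mathbf{q}\|_U^2)\,\mathrm{d}t=0$. Hence $\bz=0$, $v=0$ and $B^*\mathbf{q}=0$.

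It remains to deduce $\mathbf{q}=0$. Here I would use the weak form of the costate equation (Remark \ref{rem:p_weak_form} with vanishing source), i.e.\ $\langle\mathbf{q},\dot{\bz'}-A\bz'\rangle_{L^2(0,\infty;V),L^2(0,\infty;V')}=0$ for all $\bz'\in W^0_\infty$, together with Assumption \ref{ass:A2}: for arbitrary $\bff'\in L^2(0,\infty;V')$ let $\bz'\in W^0_\infty$ solve $\dot{\bz'}=(A-BK)\bz'+\bff'$ (solvable by Lemma \ref{lem:contractive_pert}); then $\dot{\bz'}-A\bz'=\bff'-BK\bz'$ and, since $B^*\mathbf{q}=0$, the identity collapses to $\langle\mathbf{q},\bff'\rangle=0$ for every $\bff'$, whence $\mathbf{q}=0$. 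Surjectivity of $L$ with a bound on the inverse is the genuine obstacle and is where the infinite-horizon linear theory enters: given data $(\by_0,\bff,\bg,\mathbf{h})\in X$ I would produce $(\bz,v,\mathbf{q})$ as the solution of the corresponding linear optimality system, which is well posed precisely because \ref{ass:A2} yields the finite-cost condition while full-state observation yields detectability; concretely the coupled forward–backward system is decoupled by the stabilizing solution $\Pi$ of the algebraic Riccati equation for $(A,B)$ (classical for the Stokes–Oseen operator, cf.\ \cite{Ray06}), turning it into a forward problem governed by the exponentially stable operator $A-\alpha^{-1}BB^*\Pi$, solvable on $(0,\infty)$ with the estimates of Consequence \ref{cons:C2}, after which the costate is recovered and bounded from its own stable backward equation.

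With $L$ an isomorphism, the inverse function theorem provides $\delta_3>0$, a $C^\infty$ map $\by_0\mapsto(\mathcal{Y},\mathcal{U},\mathcal{P})(\by_0)$ on $B_Y(\delta_3)$ inverting $\Phi$ near the origin, and local uniqueness within a ball of radius $\delta_3'$, which is \eqref{eq:inverse_aux}. Finally, since $(\mathcal{Y},\mathcal{U},\mathcal{P})(0)=(0,0,0)$ and the map is $C^1$ with derivative bounded on a (possibly smaller) ball, the mean value inequality yields the linear estimate \eqref{eq:lipschitzReg}. I expect the surjectivity-with-bound step to be the main difficulty, since it is the only part not reducible to the already-established nonlinear fixed-point machinery and instead requires the linear-quadratic/Riccati theory under \ref{ass:A2}.
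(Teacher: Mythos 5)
Your proposal is correct and follows essentially the same route as the paper: apply the inverse function theorem to $\Phi$ at the origin, reduce everything to showing that $D\Phi(0,0,0)$ is an isomorphism onto $X$, and obtain \eqref{eq:lipschitzReg} from the vanishing of the mappings at $0$ together with a local bound on their derivatives. The only difference is that the paper delegates the isomorphism of the linearized optimality system to the techniques of \cite[Proposition 3.1, Lemma 4.4]{BreKP18}, whereas you sketch that step yourself (energy identity for injectivity, Riccati decoupling for surjectivity), which is precisely the content of the cited argument.
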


\begin{proof}
The result is a consequence of the inverse function theorem. Since $\Phi$ contains only linear terms and three bilinear terms, it is infinitely differentiable. We also have $\Phi(0,0,0)= (0,0,0,0)$. It remains to prove that $D\Phi(0,0,0)$ is an isomorphism. Let $(\bw_1,\bw_2,\bw_3,w_4) \in X$ and let $(\by,u,\bp) \in W_\infty \times L^2(0,\infty;U) \times L^2(0,\infty;V)$.
We have the following equivalence
\begin{equation} \label{eq:equivalenceWeak}
D\Phi(0,0,0) (\by,u,\bp)= (\bw_1,\bw_{2},\bw_{3},w_4) \Longleftrightarrow
\begin{cases}
\begin{array}{rcl}
\by(0) & = & \bw_1 \\
\dot{\by} - A \by - Bu & = & \bw_2 \\
-\dot{\bp} - A^{*} \bp - \by & = & \bw_3 \\
\alpha u + B^{*} \bp & = & w_4.
\end{array}
\end{cases}
\end{equation}
It can be proved with the same techniques as for  \cite[Proposition 3.1, Lemma 4.4]{BreKP18} that the linear system on the left-hand side has a unique solution $(\by,u,\bp)$, moreover,
\begin{equation*}
\| (\by,u,\bp) \|_{W_\infty \times L^2(0,\infty;U) \times L^2(0,\infty;V)}
\leq  M \| (\bw_1,\bw_2,\bw_3,w_4) \|_X .
\end{equation*}
This proves that $D\Phi(0,0,0)$ is an isomorphism. The inverse function theorem ensures the existence of $\delta_3>0$, $\delta_3'>0$, and $C^\infty$-mappings $\mathcal{Y}$, $\mathcal{U}$, and $\mathcal{P}$ with the properties announced in  \eqref{eq:inverse_aux}.

It remains to prove \eqref{eq:lipschitzReg}. Reducing if necessary $\delta_3$, we can assume that the norms of the derivatives of the three mappings are bounded on $B_Y(\delta_3)$ by some constant $M>0$. The three mappings are therefore Lipschitz continuous with modulus $M$. Estimate \eqref{eq:lipschitzReg} follows, since $\big( \mathcal{Y}(0),\mathcal{U}(0),(\mathcal{P}(0) \big)= (0,0,0)$.
\end{proof}


\begin{proposition} \label{proposition:UisOptimal}
There exists $\delta_4 \in (0,\min(\delta_2,\delta_3)]$ such that for all $\by_0 \in B_Y(\delta_4)$, the pair $(\mathcal{Y}(y_0),\mathcal{U}(y_0))$ is the unique solution to \eqref{eq:NLQprob} with initial condition $\by_0$. Moreover, $\mathcal{P}(\by_0)$ is the unique associated costate.
\end{proposition}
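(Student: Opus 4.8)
The plan is to identify any solution of \eqref{eq:NLQprob} with the triplet $(\mathcal{Y}(\by_0),\mathcal{U}(\by_0),\mathcal{P}(\by_0))$ furnished by Lemma \ref{lemma:inverseMappingWeak}. Since problem \eqref{eq:NLQprob} is nonconvex (the constraint involves the quadratic term $F$), I would not attempt to deduce optimality of $(\mathcal{Y}(\by_0),\mathcal{U}(\by_0))$ directly from a first-order or sufficiency argument. Instead the idea is to combine \emph{existence} of a minimizer (Lemma \ref{lem:ex_opt_sol}) with the \emph{necessity} of the first-order system (Proposition \ref{proposition:optiCondWeak}): every optimal pair, together with its costate, solves the equation $\Phi(\by,u,\bp)=(\by_0,0,0,0)$, and the local uniqueness of this equation then pins every such triplet down to $(\mathcal{Y}(\by_0),\mathcal{U}(\by_0),\mathcal{P}(\by_0))$.

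Concretely, I would first fix the constant. Let $M$ be a common bound dominating both \eqref{eq:sol_est_NLQ} (for the optimal state and control) and \eqref{eq:estimCostate2} (for the associated costate), and set $\delta_4 := \min\big(\delta_2,\delta_3,\delta_3'/M\big)$, so that $\delta_4 \le \min(\delta_2,\delta_3)$ as required and, for $\by_0 \in B_Y(\delta_4)$, any optimal triplet has norm at most $M\|\by_0\|_Y \le \delta_3'$. Then, taking an arbitrary $\by_0 \in B_Y(\delta_4)$, Lemma \ref{lem:ex_opt_sol} (since $\delta_4 \le \delta_1$) gives a solution $(\bar{\by},\bar{u})$, and Proposition \ref{proposition:optiCondWeak} (since $\delta_4 \le \delta_2$) gives a unique costate $\bp$ satisfying \eqref{eq:costate_NLQ} and \eqref{eq:control_NLQ}. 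These relations, together with the state equation $\dot{\bar{\by}}-A\bar{\by}+F(\bar{\by})-B\bar{u}=0$ and the initial condition $\bar{\by}(0)=\by_0$, are precisely the statement $\Phi(\bar{\by},\bar{u},\bp)=(\by_0,0,0,0)$, with $\Phi$ as in \eqref{eq:defPhi}. The bounds \eqref{eq:sol_est_NLQ} and \eqref{eq:estimCostate2} give $\max\big(\|\bar{\by}\|_{W_\infty},\|\bar{u}\|_{L^2(0,\infty;U)},\|\bp\|_{L^2(0,\infty;V)}\big) \le M\|\by_0\|_Y \le \delta_3'$, so the triplet satisfies the size constraint in \eqref{eq:inverse_aux}, and the uniqueness clause of Lemma \ref{lemma:inverseMappingWeak} forces $(\bar{\by},\bar{u},\bp)=(\mathcal{Y}(\by_0),\mathcal{U}(\by_0),\mathcal{P}(\by_0))$.

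Since this applies to \emph{any} optimal pair, every solution of \eqref{eq:NLQprob} coincides with $(\mathcal{Y}(\by_0),\mathcal{U}(\by_0))$ and its costate with $\mathcal{P}(\by_0)$; in particular the minimizer provided by Lemma \ref{lem:ex_opt_sol} is $(\mathcal{Y}(\by_0),\mathcal{U}(\by_0))$, which therefore is optimal and, by the same identification, is the unique optimal pair, with unique associated costate $\mathcal{P}(\by_0)$. I expect the main obstacle to be purely the bookkeeping of constants guaranteeing that the optimal triplet lands inside the local uniqueness ball $\{\max(\cdots)\le\delta_3'\}$; conceptually the only point requiring care is that one must route through existence plus necessity of the optimality system, rather than through any convexity or sufficiency property that the nonconvex problem \eqref{eq:NLQprob} does not possess.
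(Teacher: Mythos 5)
Your proposal is correct and follows essentially the same route as the paper: invoke existence (Lemma \ref{lem:ex_opt_sol}) and the necessary optimality system (Proposition \ref{proposition:optiCondWeak}) to see that any optimal triplet solves $\Phi(\by,u,\bp)=(\by_0,0,0,0)$, shrink $\delta_4$ so the a priori bounds place the triplet inside the ball of radius $\delta_3'$, and conclude by the local uniqueness clause of Lemma \ref{lemma:inverseMappingWeak}. The only cosmetic difference is that you fix $\delta_4=\min(\delta_2,\delta_3,\delta_3'/M)$ explicitly where the paper simply says ``by further reduction of $\delta_4$.''
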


\begin{proof}
Let us set $\delta_4= \min(\delta_2,\delta_3)$ for the moment. Let $\by_0 \in B_Y(\delta_4)$. By Lemma \ref{lem:ex_opt_sol} and Proposition \ref{proposition:optiCondWeak}, there exist a solution $(\bar{\by},\bar{u})$ to \eqref{eq:NLQprob} with associated costate $\bar{\bp}$ which necessarily satisfies
\begin{align*}
\max(\| \bar{\by} \|_{W_{\infty}}, \| \bar{u} \|_{L^{2}(0,\infty;U)}, \|\bar{\bp}\|_{L^2(0,\infty;V)} ) \leq M \| \by_{0} \|_{Y}.
\end{align*}
By further reduction of $\delta_{4}$, we obtain that
\begin{equation*}
\max(\| \bar{\by} \|_{W_{\infty}}, \| \bar{u} \|_{L^{2}(0,\infty;U)}, \|\bar{\bp}\|_{L^2(0,\infty;V)} ) \leq \delta_3'.
\end{equation*}
Since $\Phi(\bar{\by},\bar{u},\bar{\bp})= (\by_0,0,0,0)$, Lemma \ref{lemma:inverseMappingWeak} implies that $(\bar{\by},\bar{u},\bar{\bp})= (\mathcal{Y}(\by_0),\mathcal{U}(\by_0),\mathcal{P}(\by_0))$. The proposition is proved.
\end{proof}

\begin{corollary} \label{coro:diff}
The value function $\mathcal{V}$ is infinitely differentiable on $B_Y(\delta_4)$.
\end{corollary}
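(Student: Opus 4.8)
The plan is to write the value function as the composition of the smooth data-to-solution map furnished by Lemma \ref{lemma:inverseMappingWeak} with the cost functional $J$, and then to exploit that $J$ is a bounded quadratic form, hence automatically $C^\infty$. First I would invoke Proposition \ref{proposition:UisOptimal}: for every $\by_0 \in B_Y(\delta_4)$ the pair $(\mathcal{Y}(\by_0),\mathcal{U}(\by_0))$ is the \emph{unique} solution to \eqref{eq:NLQprob}, so that by the very definition of the value function
\[
\mathcal{V}(\by_0) = J\big(\mathcal{Y}(\by_0),\mathcal{U}(\by_0)\big), \qquad \by_0 \in B_Y(\delta_4).
\]
Since $\delta_4 \leq \delta_3$, the maps $\mathcal{Y}$ and $\mathcal{U}$ are $C^\infty$ on $B_Y(\delta_4)$ by Lemma \ref{lemma:inverseMappingWeak}, so the whole task reduces to checking that $J$ is a smooth functional on the product space $W_\infty \times L^2(0,\infty;U)$.

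The key observation is that $J$ is a bounded quadratic form. The first term $\tfrac12 \int_0^\infty \|\by\|_Y^2 \dd t$ is, up to the constant, the square of the $L^2(0,\infty;Y)$-norm of $\by$, which is controlled by $\| \by \|_{W_\infty}$ through the continuous embedding $W_\infty \hookrightarrow L^2(0,\infty;Y)$; the second term is simply the square of the $L^2(0,\infty;U)$-norm of $u$. Thus $J$ is the restriction to the diagonal of a bounded symmetric bilinear form on $(W_\infty \times L^2(0,\infty;U))^2$. A bounded quadratic form on a Banach space is infinitely Fr\'echet differentiable, with first derivative linear in the increment, second derivative constant, and all derivatives of order three and higher identically zero, so $J$ is $C^\infty$.

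Finally I would conclude by the chain rule for Fr\'echet derivatives: $\mathcal{V}$ is the composition $J \circ (\mathcal{Y},\mathcal{U})$ of $C^\infty$ mappings between Banach spaces, and is therefore itself $C^\infty$ on $B_Y(\delta_4)$. I do not expect any genuine obstacle in this corollary; the only point that requires (routine) care is the boundedness of $J$ on $W_\infty \times L^2(0,\infty;U)$, i.e.\@ the continuity of the embedding into $L^2(0,\infty;Y)$, which is exactly what legitimizes treating $J$ as a smooth quadratic form and hence what makes the composition argument valid.
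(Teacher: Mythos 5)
Your proposal is correct and follows exactly the paper's argument: the paper likewise writes $\mathcal{V}(\by_0)=J(\mathcal{Y}(\by_0),\mathcal{U}(\by_0))$ (justified by Proposition \ref{proposition:UisOptimal}) and concludes by composing the $C^\infty$ maps from Lemma \ref{lemma:inverseMappingWeak} with the smooth quadratic functional $J$. Your only addition is spelling out why $J$ is smooth via the embedding $W_\infty \hookrightarrow L^2(0,\infty;Y)$, which the paper dismisses as clear.
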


 \begin{proof}
 The cost function $J$ is clearly infinitely differentiable. Since $\mathcal{V}(\by_0)= J(\mathcal{Y}(\by_0),\mathcal{U}(\by_0))$, $\mathcal{V}$ is then the composition of infinitely differentiable mappings, which shows the assertion.
 \end{proof}

 \subsection{Additional regularity for $\bp$}

We next assert that for small initial data $\by_0$ the adjoint state is more regular than $\bp\in L^2(0,\infty;V)$. For this, we need more smoothness of the boundary $\Gamma$.

\begin{customthmm}{A3}\label{ass:A3}
 Let $\Omega \subset \mathbb R^2$ denote a bounded domain with smooth boundary $\Gamma$.
 \end{customthmm}

 \begin{proposition} \label{proposition:more_reg}
There exists $\tilde{\delta}_{4}\in (0,\delta_4] $ such that for all $\by_0 \in B_Y(\tilde{\delta}_4)$, for all solutions $(\bar{\by},\bar{u})$ of
(P), there exists a unique costate $\bp \in W_\infty$ satisfying
\begin{align} \label{eq:costate_NLQ_improved}
-\dot{\bp} - A^{*}\bp - (\bar{\by}\cdot \nabla)\bp+(\nabla \bar{\by})^T \bp  = \ & \bar{\by} \quad \text{(in $L^2(0,\infty;V')$)}.
\end{align}
Moreover, there exists a constant $M>0$, independent of $(\bar{\by},\bar{u})$, such that
\begin{equation} \label{eq:estimCostate2_bis}
\| \bp \|_{W_\infty} \le M \|\by_0\|_Y.
\end{equation}
\end{proposition}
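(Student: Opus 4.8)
The plan is to upgrade the costate from $\bp \in L^2(0,\infty;V)$, already provided by Proposition \ref{proposition:optiCondWeak}, to $\bp \in W_\infty$. The only missing information is that $\dot{\bp} \in L^2(0,\infty;V')$, and since $A^*\bp \in L^2(0,\infty;V')$ and $\bar{\by} \in L^2(0,\infty;V')$ hold trivially, everything reduces to showing that the bilinear terms $(\bar{\by}\cdot\nabla)\bp - (\nabla\bar{\by})^T\bp$ belong to $L^2(0,\infty;V')$. To exploit stability I would rewrite \eqref{eq:costate_NLQ} with the \emph{stabilized} adjoint operator,
\[
-\dot{\bp} - (A-BK)^*\bp = \bar{\by} + (\bar{\by}\cdot\nabla)\bp - (\nabla\bar{\by})^T\bp + K^*B^*\bp =: \bg.
\]
By Assumption \ref{ass:A2} the semigroup $e^{(A-BK)t}$, hence its adjoint, is exponentially stable; since $BK \in \mathcal{L}(Y)$ the operator $(A-BK)^*$ is a bounded perturbation of $A^*$, so it is analytic and satisfies a coercivity estimate of the form \eqref{eq:VY-coercivity} after a shift. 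Consequently the backward equation driven by $(A-BK)^*$ enjoys the same maximal regularity as in Consequence \ref{cons:C2} (read as a backward Cauchy problem), so that once $\bg \in L^2(0,\infty;V')$ is established, its unique decaying solution lies in $W_\infty$ and satisfies $\|\bp\|_{W_\infty} \le M\|\bg\|_{L^2(0,\infty;V')}$; by uniqueness of decaying solutions this coincides with the given $\bp$.

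The terms $\bar{\by}$ and $K^*B^*\bp$ are in $L^2(0,\infty;V')$ with norm bounded by $M\|\by_0\|_Y$ thanks to \eqref{eq:sol_est_NLQ} and \eqref{eq:estimCostate2}. The heart of the matter is the pair of bilinear terms. Using Proposition \ref{prop:estimates_oseen}, the antisymmetry of $s$, and the second inequality of Lemma \ref{lem:estimates_oseen_time_var}, I expect the bound
\[
\big\| (\bar{\by}\cdot\nabla)\bp - (\nabla\bar{\by})^T\bp \big\|_{L^2(0,\infty;V')} \le M \, \|\bar{\by}\|_{L^\infty(0,\infty;V)}\, \|\bp\|_{L^2(0,\infty;V)},
\]
so it suffices to show that the optimal state satisfies $\bar{\by} \in L^\infty(0,\infty;V)$. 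This is exactly where Assumption \ref{ass:A3} is used: with a smooth boundary one has $\mD(A) = \bbH^2(\Omega)\cap V$ with graph norm equivalent to the $\bbH^2$-norm, and a parabolic regularity argument for $\dot{\bar{\by}} = A\bar{\by} - F(\bar{\by}) + B\bar{u}$ yields $\bar{\by} \in L^2(0,\infty;\mD(A)) \cap H^1(0,\infty;Y) \hookrightarrow L^\infty(0,\infty;V)$. The nonlinearity is controlled via the two-dimensional estimate $\|F(\bar{\by})\|_{L^2(0,\infty;Y)} \le \varepsilon \|\bar{\by}\|_{L^2(0,\infty;\mD(A))} + C\|\bar{\by}\|_{L^\infty(0,\infty;Y)}\|\bar{\by}\|_{L^2(0,\infty;V)}^2$ (Ladyzhenskaya's inequality together with Young's inequality), the first term being absorbed by the left-hand side for small data, while $B\bar{u} \in L^2(0,\infty;Y)$ since $B \in \mathcal{L}(U,Y)$.

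The main obstacle I anticipate is precisely the regularity of $\bar{\by}$ up to $t=0$: since only $\by_0 \in Y$ is assumed, the smoothing of the analytic semigroup is singular at the origin, so $\bar{\by} \in L^2(0,\infty;\mD(A)) \cap H^1(0,\infty;Y)$ can hold only away from $t=0$ and the clean bound above degenerates near the origin. To close this I would split the time axis into $(0,1]$ and $[1,\infty)$: on $[1,\infty)$ the smoothing gives $\bar{\by} \in L^\infty(1,\infty;V)$ and the tail of the bilinear terms is controlled as above; on $(0,1]$ one instead uses that $\bp$, being the solution of a \emph{backward} equation with vanishing data at $+\infty$, is regular at every finite time, in particular $\bp \in L^\infty(0,1;V)$, which allows one to bound $\int_0^1 \|(\bar{\by}\cdot\nabla)\bp - (\nabla\bar{\by})^T\bp\|_{V'}^2 \dd t$ using only $\bar{\by} \in L^2(0,1;V)$ and $\|\bp\|_{L^\infty(0,1;V)}$. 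This interplay between the forward smoothing of $\bar{\by}$ and the backward smoothing of $\bp$ is the delicate point; once $\bg \in L^2(0,\infty;V')$ is secured, the maximal-regularity conclusion and the estimate \eqref{eq:estimCostate2_bis} follow routinely, with $\tilde{\delta}_4$ chosen small enough for the absorption and smallness requirements.
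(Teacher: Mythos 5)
Your overall strategy---show that the coupling terms lie in $L^2(0,\infty;V')$ and then invoke backward maximal regularity for a stabilized adjoint operator---is the natural first attempt, and the reduction itself is sound. The genuine gap is in how you control the coupling terms near $t=0$. Your key estimate needs either $\bar{\by}\in L^\infty(V)$ or $\bp\in L^\infty(V)$ on the relevant time interval, and on $(0,1]$ you appeal to ``$\bp\in L^\infty(0,1;V)$ because $\bp$ solves a backward equation with vanishing data at $+\infty$.'' This is circular: for a backward equation, $\bp$ on $(0,1)$ is determined by the source on all of $(t,\infty)\supset (t,1)$, and that source contains precisely the terms $(\bar{\by}\cdot\nabla)\bp-(\nabla\bar{\by})^T\bp$ whose integrability you are trying to establish. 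Moreover, even if the full source were known to lie in $L^2(0,\infty;V')$, maximal regularity only yields $\bp\in L^2(0,\infty;V)\cap C_b([0,\infty);Y)$, not $L^\infty(0,\infty;V)$; the latter would require the source in $L^2(0,\infty;Y)$, which again presupposes more regularity of the very products in question. So the splitting $(0,1]\cup[1,\infty)$ does not close. (The bootstrap $\bar{\by}\in L^\infty(1,\infty;V)$ on the tail is plausible but also nontrivial, and is not what Assumption \ref{ass:A3} is used for.)

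The paper circumvents exactly this obstruction by giving up half a derivative on each factor rather than demanding $L^\infty(V)$ bounds: it works in the intermediate scale $W_\infty(V_0^{1+\varepsilon},(V_0^{1-\varepsilon})')$, $\varepsilon\in(0,\tfrac{1}{2})$, and uses the Grubb--Solonnikov product estimates (Proposition \ref{prop:grub}, Lemma \ref{lem:grubsol}) to bound $(\nabla\bar{\by})^T\bp$ and $(\bar{\by}\cdot\nabla)\bp$ in $L^2(0,\infty;(V_0^{1-\varepsilon})')$ by $\|\bar{\by}\|_{W_\infty}$ times $\|\bp\|_{W_\infty(V_0^{1+\varepsilon},(V_0^{1-\varepsilon})')}$; no extra regularity of $\bar{\by}$ beyond Lemma \ref{lem:ex_opt_sol} is needed. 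A backward maximal-regularity result in this scale (Lemma \ref{lem:backwards}, Corollary \ref{cor:backwards_eps}) and a contraction argument for small $\|\by_0\|_Y$ produce a fixed point $\br\in W_\infty(V_0^{1+\varepsilon},(V_0^{1-\varepsilon})')\subset W_\infty$, which is then identified with the costate $\bp$ of Proposition \ref{proposition:optiCondWeak} using the smallness of the operator $G$ from \eqref{eq:G_aux}. Assumption \ref{ass:A3} enters through the characterization $\mD((-A_\alpha^*)^\theta)=V_0^{2\theta}$ and the product estimates, not through an $\bbH^2$ bootstrap of the optimal state. To salvage your route you would have to set up a fixed point for $\bp$ in a space encoding the extra regularity---which is in effect what the paper does, only in a fractional scale where the required product estimates actually hold with the information available on $\bar{\by}$.
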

The proof is given in the appendix.

\section{Derivatives of the value function}

By standard arguments, we can derive a Hamilton-Jacobi-Bellman equation which provides an optimal feedback control based on the derivative of the value function.

All along the section, the first-order derivative $D\mV(\by_0)$ is either seen as a linear form on $Y$ or is identified with its Riesz representative in $Y$. The identification is done for example in the term $\| B^{*} D\mV(\by_0) \|_U^2$ appearing in the HJB equation below.

\begin{proposition} \label{prop:hjb_form}
There exists $\delta_5 \in (0,\tilde{\delta_4}]$ such that for all $\by_0 \in B_{Y}(\delta_5) \cap \mD(A)$, the following Hamilton-Jacobi-Bellman equation holds:
\begin{equation} \label{eq:HJB}
  D\mV(\by_0) (A \by_0 - F(\by_0) ) + \frac{1}{2} \| \by_0 \|_Y^2 - \frac{1}{2\alpha} \| B^{*} D\mV(\by_0) \|_U^2=0.
\end{equation}
Moreover,
\begin{equation} \label{eq:optimFeedback}
\bar{u}(t)= -\frac{1}{\alpha} B^{*}D\mathcal{V}(\bar{\by}(t)), \quad \text{for all $t \geq 0$},
\end{equation}
where $(\bar{\by},\bar{u})= (\mathcal{Y}(\by_0),\mathcal{U}(\by_0)).$
\end{proposition}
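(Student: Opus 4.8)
The plan is to first establish the fundamental relation $D\mV(\by_0) = \bp(0)$ between the gradient of the value function and the costate at the initial time, and then to deduce both \eqref{eq:optimFeedback} and \eqref{eq:HJB} from it. Throughout, I write $(\bar{\by},\bar u,\bp) = (\mathcal{Y}(\by_0),\mathcal{U}(\by_0),\mathcal{P}(\by_0))$, which is the optimal triple by Proposition \ref{proposition:UisOptimal}; crucially, by Proposition \ref{proposition:more_reg} we have $\bp \in W_\infty$, so the trace $\bp(0)\in Y$ is well-defined and $\bp,\bz$ decay to $0$ in $Y$ at $+\infty$. I would take $\delta_5 \le \tilde{\delta}_4$, reduced if necessary so that $\bar{\by}(t)$ stays in the differentiability domain of $\mV$.

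For the gradient–costate identity, I would differentiate $\mV(\by_0)= J(\mathcal{Y}(\by_0),\mathcal{U}(\by_0))$: by the chain rule $D\mV(\by_0)\delta\by_0 = DJ(\bar{\by},\bar u)(\bz,v)$ with $(\bz,v)=(D\mathcal{Y}(\by_0)\delta\by_0,D\mathcal{U}(\by_0)\delta\by_0)$, and differentiating the constraint $\Phi(\mathcal{Y},\mathcal{U},\mathcal{P})=(\by_0,0,0,0)$ shows that $(\bz,v)$ solves the linearized equation $\dot{\bz}-A\bz+N(\bar{\by},\bz)+N(\bz,\bar{\by})=Bv$, $\bz(0)=\delta\by_0$. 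I would then pair the strong costate equation \eqref{eq:costate_NLQ_improved} with $\bz$, integrate over $(0,\infty)$ and integrate by parts in time; the boundary term at $+\infty$ vanishes since $\bp,\bz\in W_\infty$, leaving the trace $\langle\bp(0),\delta\by_0\rangle_Y$. Using $\langle A\bz,\bp\rangle_{V',V}=\langle\bz,A^*\bp\rangle$ together with the antisymmetry $s(\bu,\bv,\bw)=-s(\bu,\bw,\bv)$ of Proposition \ref{prop:estimates_oseen}(ii), one checks that $\langle -(\bar{\by}\cdot\nabla)\bp+(\nabla\bar{\by})^T\bp,\bz\rangle = \langle N(\bar{\by},\bz)+N(\bz,\bar{\by}),\bp\rangle$, so the spatial terms recombine, via the linearized equation, into $\langle\bp,\dot{\bz}-Bv\rangle$. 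After cancellation this yields $\langle\bar{\by},\bz\rangle_{L^2(0,\infty;Y)}=\langle\bp(0),\delta\by_0\rangle_Y+\langle v,B^*\bp\rangle_{L^2(0,\infty;U)}$. Inserting $B^*\bp=-\alpha\bar u$ from \eqref{eq:control_NLQ} and comparing with the expression \eqref{eq:nonlin_aux2} for $DJ$ gives $D\mV(\by_0)\delta\by_0=\langle\bp(0),\delta\by_0\rangle_Y$, i.e.\ $D\mV(\by_0)=\bp(0)$ after Riesz identification.

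Next, \eqref{eq:optimFeedback} follows from the time-invariance of the autonomous problem \eqref{eq:NLQprob}: for every $\tau\ge 0$, the shifted pair $(\bar{\by}(\tau+\cdot),\bar u(\tau+\cdot))$ is the unique optimal solution for the initial datum $\bar{\by}(\tau)\in B_Y(\delta_5)$, with costate $\bp(\tau+\cdot)$. Applying the identity of the previous paragraph to this shifted problem gives $D\mV(\bar{\by}(\tau))=\bp(\tau)$ for all $\tau\ge 0$, and combining this with \eqref{eq:control_NLQ} yields $\bar u(\tau)=-\frac{1}{\alpha}B^*\bp(\tau)=-\frac{1}{\alpha}B^*D\mV(\bar{\by}(\tau))$. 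For the HJB equation, I would fix $\by_0\in B_Y(\delta_5)\cap\mD(A)$ and use the dynamic programming principle $\mV(\by_0)=\int_0^t\big(\frac{1}{2}\|\bar{\by}\|_Y^2+\frac{\alpha}{2}\|\bar u\|_U^2\big)\dd s+\mV(\bar{\by}(t))$, then differentiate at $t=0$. The condition $\by_0\in\mD(A)$ guarantees $A\by_0-F(\by_0)\in Y$, so $D\mV(\by_0)(A\by_0-F(\by_0))$ is a well-defined pairing in $Y$, and that $\bar{\by}$ is right-differentiable at $0$ in $Y$ with $\dot{\bar{\by}}(0)=A\by_0-F(\by_0)+B\bar u(0)$. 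The chain rule gives $D\mV(\by_0)\dot{\bar{\by}}(0)+\frac{1}{2}\|\by_0\|_Y^2+\frac{\alpha}{2}\|\bar u(0)\|_U^2=0$; substituting $\bar u(0)=-\frac{1}{\alpha}B^*D\mV(\by_0)$, using $\langle D\mV(\by_0),BB^*D\mV(\by_0)\rangle_Y=\|B^*D\mV(\by_0)\|_U^2$ and $\frac{\alpha}{2}\|\bar u(0)\|_U^2=\frac{1}{2\alpha}\|B^*D\mV(\by_0)\|_U^2$, the two quadratic contributions combine into $-\frac{1}{2\alpha}\|B^*D\mV(\by_0)\|_U^2$, and \eqref{eq:HJB} follows.

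The main obstacle is the rigorous justification of the differentiation at $t=0$: one must show that for $\by_0\in\mD(A)$ the optimal state is right-differentiable at the origin in $Y$, so that the chain rule applies to $t\mapsto\mV(\bar{\by}(t))$ with $\dot{\bar{\by}}(0)=A\by_0-F(\by_0)+B\bar u(0)$. This relies on maximal parabolic regularity of the analytic Stokes–Oseen semigroup together with sufficient temporal regularity of the data $-F(\bar{\by})+B\bar u$ near $t=0$. A secondary technical point is the adjoint calculus of the first paragraph, where the decay and integrability afforded by $\bp\in W_\infty$ (Proposition \ref{proposition:more_reg}) are exactly what make both the time-integration by parts and the vanishing of the boundary term at infinity rigorous.
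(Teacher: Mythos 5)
Your proposal is correct and follows essentially the same route as the paper: the paper's proof delegates the core argument to \cite[Proposition 9]{BreKP19}, which rests on exactly the ingredients you spell out --- the costate identity $D\mV(\by_0)=\bp(0)$ obtained by pairing the adjoint equation with the linearized state equation, the optimality condition \eqref{eq:control_NLQ}, and the dynamic programming principle differentiated at $t=0$ (together with the shift argument for $t>0$, which is verbatim the paper's). The technical point you flag (right-differentiability of $\bar{\by}$ at $t=0$ in $Y$ for $\by_0\in\mD(A)$) is likewise left to the cited reference in the paper, so your treatment is at the same level of detail.
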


\begin{remark}
  Note that by, e.g., \cite[Proposition 1.7]{Bar11}, we have that $F\colon \mD(A)\times \mD(A) \to Y$ and, as a consequence, the term $D\mV(\by_{0}) F(\by_{0})$ is well-defined.
\end{remark}

\begin{proof}
Let us set $\delta_5= \delta_4$. Let $\by_0 \in B_Y(\delta_5) \cap \mD(A)$.
Let us consider the Hamiltonian of the system, defined by
\begin{equation*}
H(\by_0,u,\bp)= \frac{1}{2} \| \by \|_Y^2 + \frac{\alpha}{2} \| u \|_U^2 + \langle \bp, A \by - F(\by) + Bu \rangle_Y, \ \ \forall (\by,u,\bp) \in \mD(A) \times U \times Y.
\end{equation*}
Using the arguments provided in the proof of \cite[Proposition 9]{BreKP19}, one can prove that
\begin{equation*}
\min_{u \in U} H(\by_0,u,D \mathcal{V}(\by_0))= 0,
\end{equation*}
from which \eqref{eq:HJB} derives. One can also prove that
$$\bar{u}(0)= \text{arg min}_{u \in U} H(\by_0,u,D\mV(\by_0)),$$
 which proves \eqref{eq:optimFeedback} for $t=0$.
Let us emphasize that the assumptions which are required in \cite[Proposition 9]{BreKP19} are satisfied. In particular,
the optimality condition $\bar{u}(t)= -\frac{1}{\alpha} B^* \bar{\bp}(t)$ which holds in $L^2(0,\infty;U)$ implies that $\bar{u}$ is almost everywhere equal to a continuous function. We can thus assume that $\bar{u}$ is continuous.
 For proving \eqref{eq:optimFeedback} for all $t \geq 0$, one has first to reduce $\delta_5$ so that $\| \bar{\by}(t) \|_Y \leq \delta_4$, for all $t \geq 0$. For a given $t \geq 0$, we have by dynamic programming that $(\bar{\by}(t+ \cdot),\bar{u}(t+\cdot))$ is the solution to \eqref{eq:NLQprob} with initial condition $\bar{\by}(t)$ and thus \eqref{eq:optimFeedback} holds true at $t$.
\end{proof}


For deriving a Taylor series expansion of $\mV$, let us follow the approach from \cite{Alb61} and differentiate \eqref{eq:HJB} in some direction $\bz_{1} \in \mD(A)$. To alleviate the calculations, we denote the variable $\by_0$ in \eqref{eq:HJB} by $\by$. We then obtain
\begin{align*}
 &D^{2}\mV(\by)\left(A\by-F(\by),\bz_{1}\right)+D\mV(\by)\left(A\bz_{1}-A_{0}(\by,\bz_{1})\right)	+ \langle \by,\bz_{1} \rangle_{Y}\\
 &\qquad - \frac{1}{\alpha} \langle B^{*}D^{2}\mV(\by)(\cdot,\bz_{1}),B^{*}D\mV(\by) \rangle_{U}=0.
\end{align*}
 A second differentiation in the directions $(\bz_{1},\bz_{2}) \in \mD(A)^2$ yields the equation
 \begin{align*}
&	 D^{3}\mV(\by)\left(A\by-F(\by),\bz_{1},\bz_{2}\right)+D^{2}\mV(\by)\left(A\bz_{2}-A_{0}(\by,\bz_{2}),\bz_{1}\right)+D^{2}\mV(\by)\left(A\bz_{1}-A_{0}(\by,\bz_{1}),\bz_{2}\right) \\
&\qquad -D\mV(\by)\left(A_{0}(\bz_{2},\bz_{1})\right) + \langle \bz_{2},\bz_{1} \rangle_{Y} -\frac{1}{\alpha}\langle B^{*} D^{3}\mV(\by)\left(\cdot,\bz_{1},\bz_{2}\right),B^{*} D \mV(\by) \rangle_{U}\\
&\qquad  - \frac{1}{\alpha} \langle B^{*} D^{2}\mV(\by)\left(\cdot,\bz_{1}\right), B^{*} D^{2}\mV(\by)\left(\cdot,\bz_{2}\right) \rangle _{U} = 0.
 \end{align*}
Since $\mV(0)=0$ and $\mV(\by)\ge 0$ for all $\by \in Y$, it follows that $D\mV(0)=0$. We can thus evaluate the last equation for $\by=0$ to obtain
\begin{equation}\label{eq:are_mlf}
\begin{aligned}
  & D^{2}\mV(0)\left(A\bz_{2},\bz_{1}\right)+D^{2}\mV(0)\left(A\bz_{1},\bz_{2}\right)   + \langle \bz_{2},\bz_{1} \rangle_{Y}  \\
  &\qquad\qquad   - \frac{1}{\alpha} \langle B^{*} D^{2}\mV(0)\left(\cdot,\bz_{1}\right), B^{*} D^{2}\mV(0)\left(\cdot,\bz_{2}\right) \rangle _{U} = 0.
\end{aligned}
\end{equation}
We recall that $D^{2}\mV(0)\in \mathcal{M}(Y\times Y,\mathbb R)$ is a bounded and symmetric bilinear form on $Y$ and thus can be represented (see, e.g., \cite[Chapter 5, Section 2]{Kat80}) by an operator $\Pi \in \mL(Y)$ such that
\begin{align*}
	D^{2}\mV(0)(\by,\bz) = \langle \Pi \by ,\bz \rangle _{Y}, \ \ \text{for all } \by,\bz \in Y.
\end{align*}
As a consequence, we can formulate \eqref{eq:are_mlf} as
\begin{align}\label{eq:are}
 \langle \bz_{2},A^{*} \Pi \bz_{1 } \rangle_{Y} + \langle \Pi A\bz_{1},\bz_{2}\rangle _{Y}+\langle \bz_{2},\bz_{1} \rangle_{Y} - \frac{1}{\alpha} \langle B^{*} \Pi \bz_{1},B^{*}\Pi \bz_{2} \rangle_{U} = 0.
\end{align}
Equation \eqref{eq:are} is the well-known \emph{algebraic operator Riccati equation} which has been studied in detail in, e.g., \cite{CurZ95,LasT00}. From the stabilizability assumption \ref{ass:A2}, and the fact that the pair $(A,\mathrm{id})$ is  exponentially detectable as a consequence of \eqref{eq:VY-coercivity}, we conclude that \eqref{eq:are} has a unique stabilizing solution $\Pi \in \mathcal{L}(Y)$.
 In the discussion below, we denote by
$$A_{\pi} := A-\frac{1}{\alpha}BB^{*}\Pi$$
the closed-loop operator associated with the linearized stabilization problem. In particular, let us mention that $A_{\pi}$ generates an analytic exponentially stable semigroup $e^{A_{\pi}t}$ on $Y$. Hence, for trajectories of the form $\tilde{\by}=e^{A\cdot}\by$, $\by\in Y$ it follows that $\tilde{\by}\in W_{\infty}$.

For higher order derivatives of $\mV$, we follow the exposition from \cite{BreKP19}. For this purpose, let us briefly recall the symmetrization technique introduced there. Let $i$ and $j \in \mathbb{N}$, consider
\begin{equation*}
S_{i,j}= \big\{ \sigma \in S_{i+j} \,|\, \sigma(1)< \dots < \sigma(i) \text{ and }
\sigma(i+1) < \dots < \sigma(i+j) \big \},
\end{equation*}
where $S_{i+j}$ is the set of permutations of $\{ 1,\dots,i+j \}$.
A permutation $\sigma \in S_{i,j}$ is uniquely defined by the subset
$\{\sigma(1),\dots,\sigma(i) \}$, therefore, the cardinality of $S_{i,j}$ is equal
to the number of subsets of cardinality $i$ of $\{ 1,\dots,i+j\}$, that is to say
$|S_{i,j}|= \binom{i+j}{i}$.
For a multilinear mapping $\mathcal{T}$ of order $i+j$, we set
\begin{equation} \label{eq:defSym}
\text{Sym}_{i,j}(\mathcal{T})(\bz_1,\dots,\bz_{i+j}) =
{\binom{i+j}{i}}^{-1} \Big[ \sum_{\sigma \in S_{i,j}} \mathcal{T}
(\bz_{\sigma(1)},\dots,\bz_{\sigma(i+j)} ) \Big].
\end{equation}
The following proposition is a generalization of the Leibniz formula for the differentiation of the product of two functions.

\begin{proposition}\label{prop:leibniz}
Let $Z$ be a Hilbert space.
Let $f\colon Y\to Z$ and $g\colon Y\to Z$ be two $k$-times continuously differentiable functions. Then, for all $k\ge 1$, for all $\by\in Y$ and $(\bz_1,\dots,\bz_k)\in Y^k$,
\begin{align*}
 D^k [\langle f(\by) , g(\by) \rangle_Z](\bz_1,\dots,\bz_k) =
  \sum_{i=0}^k \binom{k}{i} \Symm_{i,k-i} (D^i f(\by)\otimes D^{k-i}g(\by) )(\bz_{1},\dots,\bz_{k}).
 \end{align*}
\end{proposition}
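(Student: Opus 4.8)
The plan is to avoid the combinatorics of the symmetrization operators altogether by reducing the multilinear identity to the elementary scalar Leibniz rule along a single direction, and then upgrading the resulting diagonal identity to the full statement by polarization.

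First I would fix $\by \in Y$ and an arbitrary direction $\bz \in Y$, and introduce the $Z$-valued curves $F(t) := f(\by + t\bz)$ and $G(t) := g(\by + t\bz)$, which are $k$-times differentiable near $t=0$. Setting $\phi(t) := \langle F(t), G(t) \rangle_Z$, the continuity and bilinearity of the inner product give the classical Leibniz formula
\begin{equation*}
\phi^{(k)}(0) = \sum_{i=0}^k \binom{k}{i} \langle F^{(i)}(0), G^{(k-i)}(0) \rangle_Z,
\end{equation*}
proved by a one-line induction on $k$ using the product rule for the bounded bilinear map $\langle \cdot, \cdot \rangle_Z$. The chain rule along the line $t \mapsto \by + t\bz$ then yields $F^{(i)}(0) = D^i f(\by)(\bz,\dots,\bz)$, $G^{(k-i)}(0) = D^{k-i} g(\by)(\bz,\dots,\bz)$, and $\phi^{(k)}(0) = D^k[\langle f(\by),g(\by)\rangle_Z](\bz,\dots,\bz)$, with $\bz$ repeated the appropriate number of times. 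Recalling the definition of $\otimes$, this already establishes the asserted identity on the diagonal $\bz_1 = \dots = \bz_k = \bz$:
\begin{equation*}
D^k[\langle f(\by),g(\by)\rangle_Z](\bz,\dots,\bz) = \sum_{i=0}^k \binom{k}{i} \big( D^i f(\by)\otimes D^{k-i}g(\by) \big)(\bz,\dots,\bz).
\end{equation*}

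Next I would remove the symmetrization operators on the diagonal: for any $\mathcal{T} \in \mathcal{M}(Y^k,\R)$ one has $\Symm_{i,k-i}(\mathcal{T})(\bz,\dots,\bz) = \mathcal{T}(\bz,\dots,\bz)$, since all $\binom{k}{i}$ permuted terms in \eqref{eq:defSym} coincide when the arguments are equal. Hence the right-hand side of the claimed formula, evaluated on the diagonal, agrees term by term with the displayed expression, and it only remains to pass from the diagonal to arbitrary $(\bz_1,\dots,\bz_k)$. Both sides are bounded symmetric $k$-linear forms: the left-hand side $D^k[\langle f(\by),g(\by)\rangle_Z]$ is symmetric as a higher Fréchet derivative, while each summand on the right is symmetric because $\Symm_{i,k-i}(D^i f(\by)\otimes D^{k-i}g(\by))$ in fact coincides with the \emph{full} symmetrization of $D^i f(\by)\otimes D^{k-i}g(\by)$. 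Once symmetry of both sides is in hand, the polarization identity---a continuous symmetric $k$-linear form is uniquely determined by its diagonal values---promotes the diagonal equality to the full identity, finishing the proof.

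The one genuinely substantive point, and the place I would be most careful, is the claim that the partial symmetrization $\Symm_{i,k-i}$ of the tensor product equals the full symmetrization. This follows from the coset decomposition $S_{k} = \bigsqcup_{\sigma \in S_{i,k-i}} \sigma\,(S_i \times S_{k-i})$ together with the invariance of $D^i f(\by)\otimes D^{k-i}g(\by)$ under the Young subgroup $S_i \times S_{k-i}$, which itself uses that $D^i f(\by)$ and $D^{k-i}g(\by)$ are individually symmetric. I note that one could instead argue by induction on $k$, differentiating the order-$k$ formula once more and invoking bilinearity of $\otimes$ and Pascal's rule $\binom{k}{i-1}+\binom{k}{i}=\binom{k+1}{i}$; however, that route forces one to carry a frozen direction inside the partial symmetrizations and to reindex the two resulting families of terms into the order-$(k+1)$ symmetrizations, which is precisely the bookkeeping the polarization argument circumvents.
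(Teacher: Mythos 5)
Your proof is correct, but it takes a genuinely different route from the paper, which simply invokes the inductive argument of \cite[Lemma 10]{BreKP19} (induction on $k$, differentiating the order-$k$ identity once more and reassembling the terms into order-$(k+1)$ symmetrizations via Pascal's rule). Your polarization strategy replaces that bookkeeping with three clean observations: the scalar Leibniz rule along a line gives the identity on the diagonal $\bz_1=\dots=\bz_k$; the operators $\Symm_{i,k-i}$ are invisible on the diagonal since all $\binom{k}{i}$ terms in \eqref{eq:defSym} coincide there; and both sides are bounded \emph{symmetric} $k$-linear forms, so the diagonal determines them. The one load-bearing step is exactly the one you flag: that $\Symm_{i,k-i}\big(D^if(\by)\otimes D^{k-i}g(\by)\big)$ is the full symmetrization of the tensor product, which follows from the coset decomposition of $S_k$ over the Young subgroup $S_i\times S_{k-i}$ together with the symmetry of $D^if(\by)$ and $D^{k-i}g(\by)$ separately (this is also why the normalizations $\binom{k}{i}^{-1}$ versus $\frac{i!(k-i)!}{k!}$ agree). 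Your argument does presuppose that $\by\mapsto\langle f(\by),g(\by)\rangle_Z$ is $k$-times Fr\'echet differentiable so that $\phi^{(k)}(0)$ really is the diagonal value of $D^k[\langle f(\by),g(\by)\rangle_Z]$; this is standard for the composition of $C^k$ maps with a bounded bilinear form and is implicitly assumed in the statement, so it is not a gap. What your approach buys is conceptual: it explains why the partial symmetrizations appear at all (they are the unique symmetric representatives matching the diagonal data), at the mild cost of importing the polarization theorem for symmetric multilinear forms; the paper's inductive route is more self-contained but requires carrying the $\Symm$ operators through each differentiation.
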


\begin{proof}
The proof is analogous to the one given in \cite[Lemma 10]{BreKP19} for $Z= \R$.
\end{proof}

\begin{theorem} \label{thm:DifferentiabilityImpliesLyapunov}
Let $k \geq 3$. For all $\bz_1,\dots,\bz_k \in \mathcal{D}(A)$,
\begin{equation} \label{eq:Lyapunov1}
\sum_{i=1}^k
\mathcal{D}^k\mathcal{V}(0)(\bz_1,\dots,\bz_{i-1},A_{\pi}\bz_i,\bz_{i+1},\dots,\bz_k)
=  \mathcal{R}_k(\bz_1,\dots,\bz_k),
\end{equation}
where the multilinear form $\mathcal{R}_{k}\colon \mD(A)^k \rightarrow \R$ is given by
\begin{align*}
& \mathcal{R}_{k}(\bz_1,\dots,\bz_k)=\frac{1}{2\alpha} \sum_{i=2}^{k-2} \begin{pmatrix} k \\ i \end{pmatrix}
\Symm\limits_{i,k-i} \big( \mathcal{C}_i \otimes
\mathcal{C}_{k-i} \big)(\bz_1,\dots,\bz_k) \\
&\hspace{2.8cm} 
+ \frac{k(k-1)}{2} \Symm\limits_{k-2,2}\big( D^{k-1}\mV(0)\otimes
D^{2}F(0) \big)(\bz_1,\dots,\bz_k)
\end{align*}
with $
 \mathcal{C}_{i}(\bz_1,\dots,\bz_i) =  \displaystyle{B^{*} D^{i+1}
\mathcal{V}(0)(\cdot,\bz_1,\dots,\bz_i)}   $ and $D^{2}F(0)(\bz_{1},\bz_{2})=A_{0}(\bz_{1},\bz_{2})$.
\end{theorem}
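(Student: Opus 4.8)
The plan is to obtain \eqref{eq:Lyapunov1} by differentiating the Hamilton--Jacobi--Bellman equation \eqref{eq:HJB} a total of $k$ times in directions $\bz_1,\dots,\bz_k \in \mD(A)$ and evaluating at $\by = 0$, exactly in the spirit of the computation that produced \eqref{eq:are_mlf} in the case $k=2$. Since $\mV \in C^\infty(B_Y(\delta_5))$ by Corollary \ref{coro:diff} and, for $\by \in \mD(A)$, the map $\by \mapsto A\by - F(\by)$ is a polynomial from $\mD(A)$ (with the graph norm) into $Y$ (recall $F \colon \mD(A)\times\mD(A)\to Y$), while $\by\mapsto B^* D\mV(\by)$ is smooth from $Y$ into $U$, the left-hand side of \eqref{eq:HJB} defines a smooth scalar function on a graph-norm neighbourhood of $0$ in $\mD(A)$, on which it vanishes identically. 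Hence all of its derivatives at $0$ in $\mD(A)$-directions vanish, and these agree with the restrictions of the $Y$-multilinear forms $D^j\mV(0)$ because $\mD(A)\hookrightarrow Y$ continuously. I would then identify $D\mV(\by)$ with its Riesz representative $f(\by)\in Y$ and rewrite the three summands of \eqref{eq:HJB} as inner products: $\langle f(\by), g(\by)\rangle_Y$ with $g(\by):=A\by-F(\by)$, then $\tfrac12\langle\by,\by\rangle_Y$, and finally $-\tfrac{1}{2\alpha}\langle h(\by),h(\by)\rangle_U$ with $h(\by):=B^*f(\by)$.

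The core is to apply the Leibniz formula of Proposition \ref{prop:leibniz} to each product and evaluate at $0$. The relevant values at $0$ are: $f(0)=0$ and $h(0)=0$ (since $D\mV(0)=0$); $g(0)=0$, $Dg(0)=A$, $D^2g(0)=-D^2F(0)=-A_0$, and $D^jg(0)=0$ for $j\ge 3$; the derivatives of $f$ are the Riesz identifications of $D^{i+1}\mV(0)$ and those of $h$ are precisely $\mathcal{C}_i=B^*D^{i+1}\mV(0)(\cdot,\dots)$, with $\mathcal{C}_0=0$ and $\mathcal{C}_1=B^*\Pi$. The quadratic term $\tfrac12\langle\by,\by\rangle_Y$ contributes nothing for $k\ge 3$. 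In the first product only the factors $i=k-1$ (pairing $D^{k-1}f(0)$ with $Dg(0)=A$) and $i=k-2$ (pairing with $D^2g(0)=-A_0$) survive; using the symmetry of $D^k\mV(0)$ to convert the $\Symm_{k-1,1}$-sum into a slot-by-slot sum, these produce $\sum_{i=1}^k D^k\mV(0)(\bz_1,\dots,A\bz_i,\dots,\bz_k)$ together with $-\tfrac{k(k-1)}{2}\Symm_{k-2,2}(D^{k-1}\mV(0)\otimes D^2F(0))$. In the third product the terms $i=0$ and $i=k$ drop out since $\mathcal{C}_0=0$.

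The decisive bookkeeping step is to isolate, in the expansion of $-\tfrac{1}{2\alpha}\langle h,h\rangle_U$, the summands $i=1$ and $i=k-1$, which both involve $\mathcal{C}_1=B^*\Pi$. Using the symmetry of $D^k\mV(0)$ and of the inner product on $U$, each equals $-\tfrac{1}{2\alpha}\sum_{i=1}^k D^k\mV(0)(\bz_1,\dots,BB^*\Pi\bz_i,\dots,\bz_k)$, so together they yield $-\tfrac{1}{\alpha}\sum_{i=1}^k D^k\mV(0)(\bz_1,\dots,BB^*\Pi\bz_i,\dots,\bz_k)$. Added to the pure $A$-contribution of the first product, this assembles exactly $\sum_{i=1}^k D^k\mV(0)(\bz_1,\dots,A_\pi\bz_i,\dots,\bz_k)$, while the leftover summands $i=2,\dots,k-2$ give $\tfrac{1}{2\alpha}\sum_{i=2}^{k-2}\binom{k}{i}\Symm_{i,k-i}(\mathcal{C}_i\otimes\mathcal{C}_{k-i})$. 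Moving everything but the $A_\pi$-sum to the right-hand side and recombining with the $\Symm_{k-2,2}$-term from the first product reproduces $\mathcal{R}_k$, which is \eqref{eq:Lyapunov1}.

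I expect the main obstacle to be not the algebra but the functional-analytic justification of the differentiation: one must argue that \eqref{eq:HJB}, which is only asserted on $\mD(A)$, may legitimately be differentiated $k$ times in $\mD(A)$-directions, that the resulting $\mD(A)$-derivatives coincide with the restrictions of the $Y$-forms $D^j\mV(0)$, and that every term — in particular $D^k\mV(0)(\dots,A\bz_i,\dots)$ and $D^2F(0)=A_0$ — is well defined and bounded on $\mD(A)^k$. The remaining combinatorics, matching the $\Symm_{i,k-i}$ sums to the slot-by-slot form via symmetry of $D^k\mV(0)$, is routine but must be tracked carefully to keep the binomial coefficients correct.
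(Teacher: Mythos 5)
Your proposal is correct and follows essentially the same route as the paper's proof: $k$-fold differentiation of the HJB equation \eqref{eq:HJB}, the Leibniz formula of Proposition \ref{prop:leibniz}, evaluation at $\by=0$ using $D\mV(0)=0$, $F(0)=DF(0)=0$ and $D^{j}F=0$ for $j\ge 3$, and the same regrouping of the $i=1$ and $i=k-1$ contributions of the squared-norm term with the $A$-term to assemble $A_{\pi}$. The only cosmetic difference is that you apply Leibniz to $\langle f(\by),A\by-F(\by)\rangle_Y$ as a single product, whereas the paper treats $D\mV(\by)(A\by)$ and $D\mV(\by)F(\by)$ separately.
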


\begin{proof}
The proof relies on successive differentiations of \eqref{eq:HJB}. For a bilinear control problem, a similar result has been obtained in \cite[Theorem 12]{BreKP19}.
In particular, it was shown that
\begin{align}\label{eq:der_aterm}
 \left( D^k [\mV(\by)(A\by)]_{\by=0}\right)(\bz_1,\dots,\bz_k) = \sum_{i=1}^k D^k \mV(0)(\bz_1,\dots,\bz_{i-1},A\bz_i,\bz_{i+1},\dots,\bz_k).
\end{align}
Obviously, for $k \geq 3$,  we have $D^k(\frac{1}{2}\|\by\|_Y^2)=0$. Let us discuss the structure of the derivatives of the remaining terms appearing in \eqref{eq:HJB}. Applying Proposition \ref{prop:leibniz} to the term $\| B^{*}  D\mV (\by) \|_U^2 $, we obtain
  \begin{align*}
  D^k \| B^{*} D\mV (\by) \|_U^2 = \sum_{i=0}^k \begin{pmatrix} k \\ i \end{pmatrix} \Symm_{i,k-i} (D^i (B^{*} D\mV(\by)) \otimes D^{k-i}(B^{*} D \mV(\by))) .
  \end{align*}
Since $\mV$ has a minimum at the origin,  we have $D\mV(0)=0$ and the terms for $i=0$ and $i=k$ vanish when evaluated in $\by=0$. By definition of the $\Symm$-operator, for $i=1$ we obtain
\begin{align*}
   & \left .\begin{pmatrix} k \\ 1 \end{pmatrix} \Symm_{1,k-1} (D (B^{*} D\mV(\by)) \otimes D^{k-1}(B^{*}  D \mV(\by))) (\bz_1,\dots,\bz_k)\right|_{\by=0} \\
    &\qquad =\sum_{\sigma\in S_{1,k-1}}\langle B^{*} D^2\mV(0)(\cdot,\bz_{\sigma(1)}),B^{*} D^k \mV(0)(\cdot,\bz_{\sigma(2)},\dots,\bz_{\sigma(k)}) \rangle_U \\
     &\qquad =\sum_{\sigma\in S_{1,k-1}}\langle BB^{*} D^2\mV(0)(\cdot,\bz_{\sigma(1)}),D^k \mV(0)(\cdot,\bz_{\sigma(2)},\dots,\bz_{\sigma(k)}) \rangle_Y \\
     &\qquad =\sum_{\sigma\in S_{1,k-1}} D^k \mV(0)( BB^{*} D^2\mV(0)(\cdot,\bz_{\sigma(1)}),\bz_{\sigma(2)},\dots,\bz_{\sigma(k)})
\end{align*}
As explained previously, we can represent $D^2\mV(0)$ in terms of the solution $\Pi$ of the algebraic operator Riccati equation. This shows
\begin{equation}\label{eq:aux_der_hjb_bterm}
\begin{aligned}
& \left .\begin{pmatrix} k \\ 1 \end{pmatrix} \Symm_{1,k-1} (D (B^{*} D\mV(\by)) \otimes D^{k-1}(B^{*}  D \mV(\by))) (\bz_1,\dots,\bz_k)\right|_{\by=0} \\
&\qquad = \sum_{\sigma\in S_{1,k-1}} D^k \mV(0)( BB^{*}\Pi \bz_{\sigma(1)},\bz_{\sigma(2)},\dots,\bz_{\sigma(k)}) \\
& \qquad = \sum_{i=1}^k D^k \mathcal{V}(0)(\bz_1,\dots,\bz_{i-1},BB^* \bz_{i},\bz_{i+1},\dots,\bz_k).
\end{aligned}
\end{equation}
A similar relation can be derived for $i=k-1$.
Finally we consider the term $D^k(D\mV(\by)F(\by))$. By Proposition \ref{prop:leibniz}, we get
\begin{align*}
& D^{k} (D\mV(\by)F(\by)) (\bz_{1},\dots,\bz_{k}) \\
 &\quad = D^{k} \langle D\mV(\by),F(\by) \rangle_{Y} (\bz_{1},\dots,\bz_{k})\\
& \quad=\sum_{i=0}^{k} \begin{pmatrix} k \\ i \end{pmatrix} \Symm\limits_{i,k-i} \left( D^{i+1}\mV(\by) \otimes D^{k-i}F(\by)\right) (\bz_{1},\dots,\bz_{k}).
\end{align*}
Since $D^{3+\ell}F(\by)=0$ for all $\ell \ge 0$, the previous equation simplifies as follows
\begin{align*}
D^{k} (D\mV(\by)F(\by))(\bz_{1},\dots,\bz_{k}) =\sum_{i=k-2}^{k} \begin{pmatrix} k \\ i \end{pmatrix} \Symm\limits_{i,k-i} \left( D^{i+1}\mV(\by) \otimes D^{k-i}F(\by)\right) (\bz_{1},\dots,\bz_{k}).
\end{align*}
Evaluating the last expression in $\by=0$ yields
\begin{align}\label{eq:aux_der_hjb_nonl}
\left(D^{k} [D\mV(\by)F(\by)]_{\by=0}\right)(\bz_{1},\dots,\bz_{k}) =\frac{k(k-1)}{2}  \Symm\limits_{k-2,2} \left( D^{k-1}\mV(0) \otimes D^{2}F(0)\right) (\bz_{1},\dots,\bz_{k}),
\end{align}
since $F(0)$ and $DF(0)$ are both null.
Combining \eqref{eq:der_aterm}, \eqref{eq:aux_der_hjb_bterm} and \eqref{eq:aux_der_hjb_nonl} proves the assertion.
\end{proof}

\section{Polynomial feedback laws}

\subsection{Estimates for the velocity}

In this section we analyze the polynomial feedback law $u_d$ derived from the Taylor series approximation of the value function
\begin{align*}
\mV_d(\by) := \sum_{k=2}^d \frac{1}{k!}D^{k}\mV(0)(\by,\dots,\by),
\end{align*}
for a given $d \geq 2$. The feedback $u_d \colon Y \rightarrow U$ is obtained by approximating $\mV$ with $\mV_d$ in formula \eqref{eq:optimFeedback}, that is
\begin{align*}
u_{d}(\by) = -\frac{1}{\alpha} B^{*}D\mathcal{V}_d(\by)= - \frac{1}{\alpha} \sum_{k=2}^{d} \frac{1}{(k-1)!} B^{*} D^{k}\mV(0)(\cdot,\by,\dots,\by).
\end{align*}
The associated closed-loop system is given by
\begin{equation} \label{eq:cls}
\dot{\by}_d = A \by_d - F(\by_d) + Bu_{d}(\by_d), \quad \by_d(0) = \by_0.
\end{equation}
Below we will also derive an estimate for the open-loop control, i.e., the function defined by
\begin{align}\label{eq:opvsclosed}
u_d\colon [0,\infty) \to U, \ t\mapsto u_d(t):=u_d(\by_d(t))
\end{align}
 which is obtained via closed-loop dynamcics here. With slight abuse of notation, the open-loop control $u_d(t)$ as well as its closed-loop interpretation $u_d(\by_d(t))$ will both be denoted with $u_d$.

We begin with some local Lipschitz continuity estimates for the nonlinear part of the feedback law. For this purpose, we set
\begin{align}\label{eq:nonl_feed}
 G_k(\by):=-\frac{1}{\alpha (k-1)!}BB^{*}D^{k}\mV(0)(\cdot,\by,\dots,\by),
\end{align}
for all $k \geq 3$. The closed-loop system can be reformulated as follows:
\begin{align}
\dot{\by}_d
& =A_\pi \by_d - F(\by_d) -\frac{1}{\alpha} \sum_{k=3}^d \frac{1}{(k-1)!}BB^{*}D^{k} \mV(0)(\cdot,\by_d,\dots,\by_d) \notag \\
& = A_{\pi} \by_d - F(\by_d) + \sum_{k=3}^{d} G_{k}(\by_d).
\label{eq:loc_stab_cls}
\end{align}

\begin{lemma} \label{lemma:Lipschitz_G}
For all $k \geq 3$, there exists a constant $C(k)>0$ such that for all $\by$ and $\bz \in Y$,
\begin{equation*}
\| G_k(\by)- G_k(\bz) \|_Y \leq C(k) \| \by - \bz \|_Y \max( \| \by \|_Y, \| \bz \|_Y )^{k-2}.
\end{equation*}
Moreover, for all $\delta \in [0,1]$, for all $\by$ and $\bz \in W_\infty$ such that $\| \by \|_{W_\infty} \leq \delta$ and $\| \bz \|_{W_\infty} \leq \delta$,
\begin{equation*}
\| G_k(\by)-G_k(\bz) \|_{L^2(0,\infty;V')}
\leq C(k) \delta \| \by - \bz \|_{W_\infty}.
\end{equation*}
\end{lemma}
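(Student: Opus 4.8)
The plan is to reduce everything to a Lipschitz estimate for the homogeneous polynomial $\by \mapsto D^k\mV(0)(\cdot,\by,\dots,\by)$. By Corollary~\ref{coro:diff}, $\mV$ is $C^\infty$ near the origin, so $\mathcal{T} := D^k\mV(0)$ is a bounded symmetric element of $\mathcal{M}(Y^k,\R)$ with finite norm $\|\mathcal{T}\|$, and after Riesz identification $G_k(\by) = -\tfrac{1}{\alpha(k-1)!}BB^*\mathcal{T}(\cdot,\by,\dots,\by)$ with $BB^* \in \mathcal{L}(Y)$.

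First I would prove the spatial estimate. The key step is the telescoping identity, valid for every $\bw \in Y$:
\begin{equation*}
\mathcal{T}(\bw,\by,\dots,\by) - \mathcal{T}(\bw,\bz,\dots,\bz) = \sum_{j=1}^{k-1} \mathcal{T}\big(\bw,\underbrace{\bz,\dots,\bz}_{j-1},\by-\bz,\underbrace{\by,\dots,\by}_{k-1-j}\big).
\end{equation*}
Bounding each summand via the multilinear bound $\|\mathcal{T}(z_1,\dots,z_k)\|\le \|\mathcal{T}\|\prod_i\|z_i\|_Y$ and taking the supremum over $\bw \in B_Y(1)$ --- which by \eqref{eq:Riesz_identification} returns the $Y$-norm of the Riesz representative of the difference --- yields
\begin{equation*}
\big\| \mathcal{T}(\cdot,\by,\dots,\by) - \mathcal{T}(\cdot,\bz,\dots,\bz) \big\|_Y \le (k-1)\|\mathcal{T}\|\,\|\by-\bz\|_Y \max(\|\by\|_Y,\|\bz\|_Y)^{k-2}.
\end{equation*}
Multiplying by $\tfrac{1}{\alpha(k-1)!}\|BB^*\|_{\mathcal{L}(Y)}$ gives the first inequality, with $C(k) := \tfrac{(k-1)\|BB^*\|_{\mathcal{L}(Y)}\|D^k\mV(0)\|}{\alpha(k-1)!}$.

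Then the time-dependent estimate follows by applying the first one pointwise in $t$ and invoking the standard embeddings. Since the Gelfand triple gives $Y \hookrightarrow V'$ and since $W_\infty \hookrightarrow L^\infty(0,\infty;Y)$, the hypotheses $\|\by\|_{W_\infty},\|\bz\|_{W_\infty} \le \delta$ yield $\max(\|\by(t)\|_Y,\|\bz(t)\|_Y) \le M\delta$ uniformly in $t$ (with $M$ generic). Integrating the resulting pointwise bound in time and using $W_\infty \hookrightarrow L^2(0,\infty;Y)$,
\begin{equation*}
\|G_k(\by)-G_k(\bz)\|_{L^2(0,\infty;V')} \le M\,C(k)\,(M\delta)^{k-2}\,\|\by-\bz\|_{W_\infty}.
\end{equation*}
Because $k \ge 3$ and $\delta \in [0,1]$, we have $\delta^{k-2} \le \delta$, so enlarging $C(k)$ to absorb the powers of $M$ and taking the maximum with the constant from the first part gives the second inequality with a single $C(k)$.

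I do not anticipate a genuine obstacle: the argument is multilinear bookkeeping combined with the Gelfand-triple and $W_\infty$ embeddings already used throughout the paper. The only points needing care are correctly identifying $\mathcal{T}(\cdot,\by,\dots,\by)$ with its Riesz representative so that \eqref{eq:Riesz_identification} applies, and absorbing the surplus factor $\delta^{k-2}$ into $\delta$ --- which is precisely where the hypotheses $k\ge 3$ and $\delta\le 1$ enter.
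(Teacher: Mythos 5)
Your proposal is correct and follows essentially the same route as the paper: the identical telescoping decomposition of $D^k\mathcal{V}(0)(\cdot,\by,\dots,\by)-D^k\mathcal{V}(0)(\cdot,\bz,\dots,\bz)$, the multilinear operator-norm bound yielding the same constant (your $\frac{k-1}{(k-1)!}=\frac{1}{(k-2)!}$ matches the paper's $C(k)$), and the absorption $\delta^{k-2}\le\delta$ using $k\ge 3$ and $\delta\le 1$ for the second inequality. Your treatment of the time-dependent estimate is merely a more explicit spelling-out of the embeddings the paper leaves implicit.
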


\begin{proof}
We have the identity
\begin{align*}
& D^k \mathcal{V}(0)(\cdot, \by,\dots,\by) -
D^k \mathcal{V}(0)(\cdot, \bz,\dots,\bz)
 = D^k \mathcal{V}(0)(\cdot, \by - \bz, \by,\dots,\by) \\
&  \qquad \qquad \qquad + D^k \mathcal{V}(0)(\cdot, \bz, \by - \bz,\dots,\by)
+ \dots
+ D^k \mathcal{V}(0)(\cdot, \bz,\dots,\bz, \by- \bz).
\end{align*}
The first inequality easily follows, with $C(k)= \frac{1}{\alpha (k-2)!} \| B\|_{\mathcal{L}(U,Y)}^2  \|D^k \mathcal{V}(0) \|$ and $\|D^k \mathcal{V}(0) \|$ as defined in \eqref{eq:OperatorNormTensor}. We also obtain that for all $\by$ and $\bz \in W_\infty$,
\begin{equation*}
\| G_k(\by)-G_k(\bz) \|_{L^2(0,\infty;V')}
\leq C(k) \| \by - \bz \|_{W_\infty} \max( \| \by \|_{W_\infty}, \| \bz \|_{W_\infty} )^{k-2}.
\end{equation*}
The second inequality follows, since $k \geq 3$ and $\delta \leq 1$.
\end{proof}

The well-posedness of the closed-loop system can be now established with the same tools as those used in Lemma \ref{lem:eq:non_loc_sol}.

\begin{theorem} \label{thm:cls_well_posed}
Let $d \geq 2$. Let $C$ and $C(k)$ denote the constants from Lemma \ref{lemma:Lipschitz_F} and Lemma \ref{lemma:Lipschitz_G}. There exists a constant $M_{\mathrm{cls}}$ such that for all $\by_0 \in Y$ with
\begin{align*}
\| \by _0\| _Y   \le \frac{1}{4(C+ \sum_{k=3}^d C(k))M_{\mathrm{cls}}^2},
\end{align*}
the closed-loop system \eqref{eq:cls} has a unique solution $\by_d$ in $W_\infty$, which satisfies
\begin{equation} \label{eq:estimate_cls}
\| \by_d\|_{W_\infty}
\leq 2 M_{\mathrm{cls}} \| \by_0 \|_Y.
\end{equation}
\end{theorem}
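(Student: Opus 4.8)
The plan is to reproduce the fixed-point argument of Lemma~\ref{lem:eq:non_loc_sol}, now carried out with the closed-loop generator $A_\pi$ in place of the abstract $A_s$, and with the augmented nonlinearity $\mathcal{N}(\by) := -F(\by) + \sum_{k=3}^d G_k(\by)$ in place of $-F$. First I would record the linear regularity estimate for $A_\pi$. Since $A_\pi = A - \frac{1}{\alpha}BB^{*}\Pi$ differs from $A$ by the bounded operator $\frac{1}{\alpha}BB^{*}\Pi \in \mathcal{L}(Y)$, the coercivity \eqref{eq:VY-coercivity} is inherited by $A_\pi$ up to enlarging the shift $\lambda$, and $A_\pi$ generates an exponentially stable analytic semigroup (as already noted after \eqref{eq:are}). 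Hence, exactly as in Consequence~\ref{cons:C2} and estimate \eqref{eq:reg_est_stabsys}, there is a constant $M_{\mathrm{cls}}$ such that for every $\by_0 \in Y$ and every $\bg \in L^2(0,\infty;V')$ the linear equation $\dot{\bz} = A_\pi \bz + \bg$, $\bz(0) = \by_0$, has a unique solution $\bz \in W_\infty$ with $\|\bz\|_{W_\infty} \le M_{\mathrm{cls}}(\|\by_0\|_Y + \|\bg\|_{L^2(0,\infty;V')})$. Without loss of generality I take $M_{\mathrm{cls}} \ge \frac{1}{2(C + \sum_{k=3}^d C(k))}$.

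Next comes the contraction step. Writing $\gamma := \|\by_0\|_Y$ and using the hypothesis, set $\delta := 2 M_{\mathrm{cls}}\gamma$, so that $\delta \le \frac{1}{2(C + \sum_{k=3}^d C(k))M_{\mathrm{cls}}} \le 1$. On the ball $\mathcal{S} := \{\by \in W_\infty : \|\by\|_{W_\infty} \le \delta\}$ define $\mathcal{Z}(\by) = \bz$, where $\bz$ solves $\dot{\bz} = A_\pi \bz + \mathcal{N}(\by)$, $\bz(0) = \by_0$. Because $F(0) = 0$ and $G_k(0) = 0$, the bound of Lemma~\ref{lemma:Lipschitz_F} and the second inequality of Lemma~\ref{lemma:Lipschitz_G} give $\|\mathcal{N}(\by)\|_{L^2(0,\infty;V')} \le (C + \sum_{k=3}^d C(k))\delta\|\by\|_{W_\infty} \le \frac{1}{2M_{\mathrm{cls}}}\|\by\|_{W_\infty}$; combined with the linear estimate this yields $\|\mathcal{Z}(\by)\|_{W_\infty} \le M_{\mathrm{cls}}(\frac{1}{2M_{\mathrm{cls}}}\delta + \gamma) = 2M_{\mathrm{cls}}\gamma = \delta$, so $\mathcal{Z}(\mathcal{S}) \subseteq \mathcal{S}$. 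The same two Lipschitz estimates applied to $\mathcal{Z}(\by_1) - \mathcal{Z}(\by_2)$, which solves the linear equation with datum $\mathcal{N}(\by_1) - \mathcal{N}(\by_2)$ and zero initial value, give the contraction factor $M_{\mathrm{cls}}(C + \sum_{k=3}^d C(k))\delta \le \frac{1}{2}$. Banach's fixed-point theorem then produces a unique $\by_d \in \mathcal{S}$ with $\mathcal{Z}(\by_d) = \by_d$, i.e.\ a solution of \eqref{eq:loc_stab_cls} satisfying \eqref{eq:estimate_cls}.

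Finally I would upgrade uniqueness from the ball $\mathcal{S}$ to all of $W_\infty$, repeating the energy/Gronwall argument at the end of the proof of Lemma~\ref{lem:eq:non_loc_sol}. For two solutions $\by, \bz \in W_\infty$, the difference $\be = \by - \bz$ satisfies $\dot{\be} = A_\pi \be - (F(\by) - F(\bz)) + \sum_{k=3}^d (G_k(\by) - G_k(\bz))$ with $\be(0) = 0$; testing with $\be$ and using \eqref{eq:VY-coercivity} for $A_\pi$ together with Proposition~\ref{prop:estimates_oseen} handles the $A_\pi$ and $F$ contributions exactly as before. The new polynomial terms are in fact benign: by the first (pointwise-in-$Y$) estimate of Lemma~\ref{lemma:Lipschitz_G} and the embedding $W_\infty \hookrightarrow L^\infty(0,\infty;Y)$, each term $\langle G_k(\by) - G_k(\bz), \be\rangle_Y$ is bounded by $C(k)\max(\|\by\|_{L^\infty(0,\infty;Y)}, \|\bz\|_{L^\infty(0,\infty;Y)})^{k-2}\|\be\|_Y^2$, i.e.\ a term of the form $c(t)\|\be\|_Y^2$ with $c \in L^\infty(0,\infty)$, which merely enlarges the Gronwall constant. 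Hence $\be \equiv 0$, proving uniqueness in $W_\infty$. The only genuinely delicate point is the bookkeeping that ties the combined Lipschitz constant $C + \sum_{k=3}^d C(k)$ to the admissibility threshold on $\|\by_0\|_Y$ and to the choice of $M_{\mathrm{cls}}$; everything else is a direct transcription of Lemma~\ref{lem:eq:non_loc_sol}, with the polynomial feedback terms absorbed through Lemma~\ref{lemma:Lipschitz_G}.
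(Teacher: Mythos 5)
Your proposal is correct and follows essentially the same route as the paper: the paper likewise obtains existence by rerunning the fixed-point argument of Lemma~\ref{lem:eq:non_loc_sol} with $A_\pi$ and the nonlinearity augmented by the $G_k$ terms (absorbed via Lemma~\ref{lemma:Lipschitz_G} into the combined constant $C+\sum_{k=3}^d C(k)$), and proves uniqueness in all of $W_\infty$ by the same energy/Gronwall argument, with the polynomial feedback terms contributing only a bounded-in-time factor $\max(\|\by\|_Y,\|\bz\|_Y)^{k-2}$ to the Gronwall coefficient. Your write-up simply makes explicit the bookkeeping that the paper compresses into ``exactly as in Lemma~\ref{lem:eq:non_loc_sol}''.
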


\begin{proof}
The existence of a solution $\by \in W_\infty$, satisfying \eqref{eq:estimate_cls}, can be obtained exactly as in Lemma \ref{lem:eq:non_loc_sol}. Thus we only discuss uniqueness.
Let $\by$ and $\bz$ denote two solutions to \eqref{eq:cls} in $W_\infty$. Let us set $\mathbf{e}= \by - \bz$. Arguing as in the proof of Lemma \ref{lem:eq:non_loc_sol}, one can prove the existence of $M>0$ such that
\begin{equation*}
\frac{1}{2} \frac{\text{d}}{\text{d} t} \| \mathbf{e} \|_Y^2
\leq M \Big( 1 + \| \by \|_Y^2 \| \by \|_V^2 + \| \bz \|_Y \|^2 \bz \|_V^2 +
\sum_{k=3}^d C(k)^2 \max ( \| \by \|_Y, \| \bz \|_Y )^{2(k-2)} \Big) \| \mathbf{e} \|_Y^2,
\end{equation*}
for all $t \geq 0$.
Since $\by$ and $\bz \in W_\infty$ and $\mathbf{e}(0)=0$, we obtain with Gronwall's inequality that $\mathbf{e}=0$, which proves the uniqueness of the solution to the closed-loop system.
\end{proof}

\begin{theorem}\label{theo19}
Let $d \geq 2$. There exist $\delta_6 > 0$ and $M > 0$ such that for all $\by_{0} \in B_Y(\delta_6)$, it holds that
\begin{align*}
\| \bar{\by}-\by_{d} \|_{W_{\infty}} & \le M \| \by _{0} \|_{Y}^{d}, \\
\max\left( \| \bar{u}-u_{d} \|_{L^{2}(0,\infty;U)},\| \bar{u}-u_{d} \|_{L^{\infty}(0,\infty;U)}\right)& \le  M \| \by _{0} \|_{Y}^{d},
\end{align*}
where $(\bar{\by},\bar{u})= (\mathcal{Y}(\by_0),\mathcal{U}(\by_0))$, $\by_{d}$ is the solution of the closed-loop system \eqref{eq:cls} with initial condition $\by_0$, and $u_d$ is as defined in \eqref{eq:opvsclosed}.
\end{theorem}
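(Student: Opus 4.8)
The plan is to compare $\bar{\by}$ and $\by_d$ by writing both trajectories as perturbations of the exponentially stable linearized closed loop governed by $A_\pi = A - \frac{1}{\alpha}BB^*\Pi$, treating the difference of the two feedback laws as a small Lipschitz term plus a Taylor-remainder source of size $\|\by_0\|_Y^d$. First I would record the a priori bounds on a common ball $B_Y(\delta_6)$: by Lemma \ref{lem:ex_opt_sol}, $\|\bar{\by}\|_{W_\infty} \le M\|\by_0\|_Y$, and by Theorem \ref{thm:cls_well_posed}, $\|\by_d\|_{W_\infty} \le 2M_{\mathrm{cls}}\|\by_0\|_Y$, both for $\|\by_0\|_Y$ small. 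Using the optimal feedback \eqref{eq:optimFeedback}, $\bar u = -\frac{1}{\alpha}B^* D\mV(\bar{\by})$, and splitting $D\mV(\by) = \Pi\by + (D\mV(\by) - \Pi\by)$, the optimal state solves $\dot{\bar{\by}} = A_\pi\bar{\by} - F(\bar{\by}) + \mathcal G(\bar{\by})$ with $\mathcal G(\by) := -\frac{1}{\alpha}BB^*(D\mV(\by) - \Pi\by)$, while \eqref{eq:loc_stab_cls} reads $\dot{\by}_d = A_\pi\by_d - F(\by_d) + \mathcal G_d(\by_d)$ with $\mathcal G_d(\by) := \sum_{k=3}^d G_k(\by) = -\frac{1}{\alpha}BB^*(D\mV_d(\by) - \Pi\by)$. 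Setting $\be := \bar{\by} - \by_d$ gives $\be(0) = 0$ and $\dot{\be} = A_\pi\be - (F(\bar{\by}) - F(\by_d)) + (\mathcal G_d(\bar{\by}) - \mathcal G_d(\by_d)) + (\mathcal G(\bar{\by}) - \mathcal G_d(\bar{\by}))$, where I deliberately split the feedback difference so that only the already-established Lipschitz estimates of Lemmas \ref{lemma:Lipschitz_F} and \ref{lemma:Lipschitz_G} are needed, the last bracket being a pure Taylor remainder evaluated along $\bar{\by}$.

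Second, I would establish the key source estimate. Since $D\mV(0) = 0$, the map $D\mV_d$ is exactly the Taylor polynomial of order $d-1$ of $D\mV$ at the origin; as $\mV \in C^\infty(B_Y(\delta_4))$ by Corollary \ref{coro:diff}, Taylor's theorem together with the boundedness of $D^{d+1}\mV$ on a smaller ball yields $\|D\mV(\by) - D\mV_d(\by)\|_Y \le M\|\by\|_Y^d$ for $\by \in B_Y(\delta_6)$, hence $\|\mathcal G(\bar{\by}(t)) - \mathcal G_d(\bar{\by}(t))\|_Y \le M\|\bar{\by}(t)\|_Y^d$ pointwise in $t$. The passage to an $L^2(0,\infty;V')$ bound of the right order is the crux: writing $\|\bar{\by}(t)\|_Y^{2d} \le \|\bar{\by}(t)\|_Y^2\,\|\bar{\by}\|_{L^\infty(0,\infty;Y)}^{2(d-1)}$ and using the continuous embedding $W_\infty \hookrightarrow L^\infty(0,\infty;Y)$ with the a priori bound gives $\int_0^\infty \|\bar{\by}(t)\|_Y^{2d}\,\mathrm dt \le (M\|\by_0\|_Y)^{2(d-1)}\|\bar{\by}\|_{L^2(0,\infty;Y)}^2 \le M\|\by_0\|_Y^{2d}$, so that $\|\mathcal G(\bar{\by}) - \mathcal G_d(\bar{\by})\|_{L^2(0,\infty;V')} \le M\|\by_0\|_Y^d$. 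This is the mechanism that upgrades a pointwise $d$-th order bound to the global rate.

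Third, I would close the estimate. The operator $A_\pi$ is a bounded perturbation of $A$, so after enlarging $\lambda$ it satisfies a coercivity inequality of the form \eqref{eq:VY-coercivity} and generates an exponentially stable analytic semigroup; the linear regularity estimate \eqref{eq:reg_est_stabsys} therefore applies to $A_\pi$ and, since $\be(0) = 0$, gives $\|\be\|_{W_\infty} \le M_\pi\|\bg\|_{L^2(0,\infty;V')}$, where $\bg$ collects the three brackets above. By Lemma \ref{lemma:Lipschitz_F} and Lemma \ref{lemma:Lipschitz_G} with $\delta = M\|\by_0\|_Y$, the first two brackets are bounded by $M_\pi(C + \sum_{k=3}^d C(k))\,\delta\,\|\be\|_{W_\infty}$, while the third is $\le M\|\by_0\|_Y^d$ by the previous step. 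Reducing $\delta_6$ so that $M_\pi(C + \sum_{k=3}^d C(k))\,M\delta_6 \le \tfrac{1}{2}$, the $\|\be\|_{W_\infty}$ term is absorbed into the left-hand side, yielding $\|\bar{\by} - \by_d\|_{W_\infty} \le M\|\by_0\|_Y^d$.

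Finally, for the control I would write $\bar u - u_d = -\frac{1}{\alpha}B^*\big[(D\mV(\bar{\by}) - D\mV_d(\bar{\by})) + (D\mV_d(\bar{\by}) - D\mV_d(\by_d))\big]$. The first term is the Taylor remainder already controlled pointwise by $M\|\bar{\by}\|_Y^d$, giving the $L^\infty(0,\infty;U)$ bound via $W_\infty \hookrightarrow L^\infty(0,\infty;Y)$ and the $L^2(0,\infty;U)$ bound via the same time-integration trick. The second term is handled by the local Lipschitz continuity of the polynomial $D\mV_d$ on the ball of radius $M\|\by_0\|_Y$, whose modulus is $O(1)$, so it is bounded by $M\|\be(t)\|_Y$ pointwise; both its $L^2$ and $L^\infty$ norms are then controlled by $\|\be\|_{W_\infty} \le M\|\by_0\|_Y^d$ from the third step, and combining gives the claimed control estimates. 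I expect the main obstacle to be precisely the source-term step: making the Taylor remainder of $D\mV$ uniform on a ball (which hinges on the $C^\infty$-regularity of $\mV$ from Corollary \ref{coro:diff}) and, above all, converting its pointwise $d$-th order smallness into the $L^2(0,\infty)$ and $L^\infty(0,\infty)$ rates, since the remaining contraction and absorption argument merely mirrors the fixed-point scheme of Lemma \ref{lem:eq:non_loc_sol}.
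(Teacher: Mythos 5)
Your proposal is correct and follows essentially the same route as the paper: the bracket $\mathcal G(\bar{\by})-\mathcal G_d(\bar{\by})$ is exactly the paper's Taylor remainder term $-\frac{1}{\alpha}BB^*R_d(\bar{\by})$, the $L^\infty$--$L^2$ interpolation giving $\|R_d(\bar{\by})\|_{L^2(0,\infty;V')}\le M\|\by_0\|_Y^d$ is the paper's key source estimate, and the closing absorption via the exponential stability of $A_\pi$ together with Lemmas \ref{lemma:Lipschitz_F} and \ref{lemma:Lipschitz_G}, as well as the final splitting of $\bar u-u_d$, coincide with the paper's argument.
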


\begin{proof}
Let us fix $\delta_6= \min \big( \delta_5,(4(C+ \sum_{k=3}^d C(k)) M_{\text{cls}}^2)^{-1} \big)$, so that Proposition \ref{prop:hjb_form} and Theorem \ref{thm:cls_well_posed} apply for $\by_0 \in B_Y(\delta_6)$.
By Taylor's theorem, see, e.g., \cite[Theorem 4A]{Zei86}, there exists $\delta > 0$ such that for all $\by \in B_Y(\delta)$,
\begin{equation} \label{eq:taylor_expansion}
D\mV({\by}) = \sum_{k=2}^{d}\frac{1}{(k-1)!} D^{k} \mV(0) (\cdot,{\by},\dots,{\by}) + R_{d} ({\by}),
\end{equation}
where the remainder term $R_{d}$ satisfies
\begin{align*}
\| R_{d}({\by}) \|_{Y} \leq M \| {\by} \|^{d}_{Y},
\end{align*}
for some constant $M$ independent of $\by$. Reducing if necessary $\delta_6$, we have that $\| \bar{\by}(t) \|_Y \leq \delta$ for all $t \geq 0$. Combining then \eqref{eq:optimFeedback} and the Taylor expansion \eqref{eq:taylor_expansion}, we obtain that
\begin{equation} \label{eq:abstract_opt_traj}
\dot{\bar{\by}}
= A \bar{\by} - F(\bar{\by}) - \frac{1}{\alpha} B B^* D \mathcal{V}(\bar{\by})
= A_\pi \bar{\by} - F(\bar{\by})
+ \sum_{k=3}^d G_k(\bar{\by}) - \frac{1}{\alpha} B B^* R_d(\bar{\by}).
\end{equation}
Let us now consider the error dynamics $\mathbf{e}:= \bar{\by}-\by_{d}$. We have $\mathbf{e}(0)=0$, moreover by \eqref{eq:loc_stab_cls} and \eqref{eq:abstract_opt_traj},
\begin{align*}
 \dot{ \mathbf{e} } & = A_\pi \mathbf{e} - F(\bar{\by})+F(\by_{d}) + \sum_{k=3}^{d} (G_{k}(\bar{\by})-G_{k}(\by_{d})) - \frac{1}{\alpha} B B^* R_d(\bar{\by}).
\end{align*}
Alternatively, $\mathbf{e}$ can be expressed as the solution of the system
\begin{align}\label{eq:aux_error}
\dot{\mathbf{e}}= A_{\pi} \mathbf{e} + \mathbf{f}, \quad \mathbf{e}(0)=0,
\end{align}
where the source term $\mathbf{f}$ is given by
\begin{align*}
\mathbf{f}=
-F(\bar{\by})+F(\by_{d})
+ \sum_{k=3}^{d} (G_{k}(\bar{\by})-G_{k}(\by_{d}))
- \frac{1}{\alpha} B B^* R_d(\bar{\by}).
\end{align*}
Consider $\tilde{\delta} \in (0,1]$. The precise value of $\tilde{\delta}$ will be fixed later.
By Lemma \ref{lemma:inverseMappingWeak} and Theorem \ref{thm:cls_well_posed}, we can reduce $\delta_6$ so that $\max(\| \bar{\by} \|_{W_{\infty}},\| \by_{d} \|_{W_{\infty}} ) \leq \tilde{\delta}$.
We first observe that
\begin{equation*}
\Big\| \frac{1}{\alpha} B B^* R_d(\bar{\by}) \Big\|_{L^2(0,\infty;V')}
\leq M \| \bar{\by} \|_{L^\infty(0,\infty;Y)}^{d-1} \| \bar{\by} \|_{L^2(0,\infty;Y)}
\leq M \| \by_0 \|_Y^d.
\end{equation*}
Applying further Lemma \ref{lemma:Lipschitz_F} and Lemma \ref{lemma:Lipschitz_G}, we obtain
\begin{align*}
\|\mathbf{f}\|_{L^2(0,\infty;V')}
& \leq M \Big(\| F(\bar{\by})-F(\by_{d})\|_{L^2(0,\infty;V')}
+ \sum_{k=3}^{d} \| G_{k}(\bar{\by})-G_{k}(\by_{d})\|_{L^2(0,\infty;V')}
+ \| \by_0 \|_Y^d \Big) \\
& \leq M (\tilde{\delta} \| \mathbf{e} \|_{W_{\infty}} + \| \by_{0} \|_{Y}^{d} ).
\end{align*}
For the solution of system \eqref{eq:aux_error} we thus obtain the estimate
\begin{align*}
\| \mathbf{e} \|_{W_{\infty}} \le M \|\mathbf{f}\|_{L^2(0,\infty;V')} \le M (\tilde{\delta} \| \mathbf{e} \|_{W_{\infty}} + \| \by_{0} \|_{Y}^{d} ).
\end{align*}
The constant $M> 0$ in the above estimate is independent of $\tilde{\delta}$. We can now define $\tilde{\delta}= \min \big( 1, \frac{1}{2M} \big)$. The first estimate on $\| \bar{\by}-\by_{d} \|_{W_{\infty}}$ follows.

Let us estimate $\bar{u}-u_d$. By \eqref{eq:optimFeedback} and by definition of the generated open-loop control $u_d$, we have that
\begin{align*}
\bar{u}- u_d
= - \frac{1}{\alpha} B^* \big( D \mathcal{V}(\bar{\by})- D \mathcal{V}_d(\by_d) \big)
= - \frac{1}{\alpha} B^* \big ( R_d(\bar{\by}) + D \mathcal{V}_d(\bar{\by})- D \mathcal{V}_d(\by_d) \big).
\end{align*}
Let us estimate the two terms of the right-hand side.
It is easy to check that
\begin{equation*}
\max \big(
\| R_d(\bar{\by}) \|_{L^\infty(0,\infty;Y)},
\| R_d(\bar{\by}) \|_{L^2(0,\infty;Y)} \big) \leq M \| \by_0 \|_Y^d.
\end{equation*}
Using the techniques of Lemma \ref{lemma:Lipschitz_G} and the estimate on $\| \bar{\by} - \by_d \|_{W_\infty}$, we also obtain that
\begin{align*}
&\max \big(
\| D \mathcal{V}_d(\bar{\by})- D \mathcal{V}_d(\by_d) \|_{L^2(0,\infty;Y)},
\| D \mathcal{V}_d(\bar{\by})- D \mathcal{V}_d(\by_d) \|_{L^\infty(0,\infty;Y)}
\big) \\
&\qquad \leq M \| \bar{\by}- \by_d \|_{W_\infty}
\leq  M \| \by_0 \|_Y^d.
\end{align*}
The second estimate on $\bar{u}-u_d$ follows.
\end{proof}


\subsection{Estimates for the pressure}

It is well-known that for $\by_0 \in Y$, the pressure term that can be associated to the Navier-Stokes equations is a distribution only (see, e.g., \cite{Sim99}, \cite[Chapter III-\S3]{Tem79}).
In the following, we redemonstrate this fact and we argue that a result analogous to Theorem \ref{theo19} also holds for the pressure, provided the latter is considered in $W^{-1,\infty}(0,\infty; L_0^2(\Omega)) = W^{1,1}_0(0,\infty; L_0^2(\Omega))'$ with
\begin{equation*}
W^{1,1}_0(0,\infty; L_0^2(\Omega)) = \left\{v \in W^{1,1}(0,\infty;L_0^2(\Omega)) \ | \ v(0) = 0 \right\}
\end{equation*}
and
\begin{equation*}
L_0^2(\Omega) = \left\{v \in L^2(\Omega) \ | \ \int_\Omega{v(x)\, \mathrm{d}x = 0}\right\}.
\end{equation*}
We define similarly $W_0^{1,1}(0,\infty;\mathbb{H}_0^1(\Omega))$.
We recall here that $W^{1,1}_0(0,\infty; \mathbb{H}_0^1(\Omega))$ embeds continuously into $L^\infty(0,\infty;\mathbb{H}_0^1(\Omega)) \cap L^2(0,\infty;\mathbb{H}_0^1(\Omega))$. Further the elements $\bphi$ of $W^{1,1}_0(0,\infty; \mathbb{H}_0^1(\Omega))$ can be identified a.e.\@ with continuous functions on $[0,\infty)$ and satisfy $\lim_{t\to \infty} \| \bphi(t)\|_{\mathbb{H}_0^1(\Omega)}=0$. We use the properties of Banach-space valued functions as summarized in \cite[Chapter II-\S 5]{Boy13}.

\begin{lemma} \label{lemma:pressure}
Let $(\by,u) \in W_\infty \times L^2(0,\infty;U)$ be such that $\dot{\by}= A \by - F(\by) + Bu$. Then, there exists a unique $p \in W^{-1,\infty}(0,\infty;L_0^2(\Omega))$ such that
\begin{equation*}
\dot{\by}= A \by - F(\by) + Bu - \nabla p \quad
\text{in $W^{1,1}(0,\infty;\mathbb{H}_0^1(\Omega))'$},
\end{equation*}
that is,
\begin{align}
-\int_0^\infty \langle \by(t), \dot{\bphi}(t) \rangle_Y \, \mathrm{d} t
= & \ \int_0^\infty \big\langle A \by(t) - F(\by(t)) + Bu(t), \bphi(t) \big\rangle_{\mathbb{H}^{-1}(\Omega),\mathbb{H}_0^1(\Omega)} \, \mathrm{d} t \notag \\
& \qquad + \langle p, \divv \bphi \rangle_{W^{-1,\infty}(0,\infty,L_0^2(\Omega)),W_0^{1,1}(0,\infty;L_0^2(\Omega))}, \label{eq:weakNS}
\end{align}
for all $\bphi \in W_0^{1,1}(0,\infty;\mathbb{H}_0^1(\Omega))$.
Moreover,
\begin{equation} \label{eq:estimate_pi}
\| p \|_{W^{-1,\infty}(0,\infty,L_0^2(\Omega))}
\leq M \big( \| \by \|_{W_\infty} + \| \by \|_{W_\infty}^2 + \| u \|_{L^2(0,\infty;U)} \big),
\end{equation}
for a constant $M$ independent of $(\by,u)$.
\end{lemma}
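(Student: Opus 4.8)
The plan is to regard this as a time-dependent, distributional version of the classical de Rham theorem: the assumed identity $\dot{\by}= A\by - F(\by) + Bu$ holds only after testing against divergence-free functions, and the pressure is precisely the obstruction to removing the Leray projector when testing against general $\mathbb{H}_0^1(\Omega)$-fields. The reason $p$ is forced into the negative-order-in-time space $W^{-1,\infty}(0,\infty;L_0^2(\Omega))$ is that $\dot{\by}$ is a priori only in $L^2(0,\infty;V')$, so no pointwise-in-time de Rham argument is available. First I would introduce, for every $\bphi \in W_0^{1,1}(0,\infty;\mathbb{H}_0^1(\Omega))$, the residual functional
\[
\ell(\bphi) := -\int_0^\infty \langle \by(t),\dot{\bphi}(t)\rangle_Y \, \mathrm{d}t - \int_0^\infty \big\langle A\by(t)-F(\by(t))+Bu(t),\bphi(t)\big\rangle_{\mathbb{H}^{-1}(\Omega),\mathbb{H}_0^1(\Omega)} \, \mathrm{d}t,
\]
where $A$, $F$, $B$ are read through their canonical non-projected $\mathbb{H}^{-1}(\Omega)$-valued representatives so that the pairing against a non-solenoidal $\bphi$ is meaningful; the goal is to produce $p$ with $\ell(\bphi)=\langle p,\divv\bphi\rangle$.

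Two analytic inputs are needed. First, $\ell$ is bounded: using the Gelfand-triple embedding $W_\infty \hookrightarrow L^\infty(0,\infty;Y)\cap L^2(0,\infty;V)$, the stated embedding $W_0^{1,1}(0,\infty;\mathbb{H}_0^1(\Omega)) \hookrightarrow L^\infty(0,\infty;\mathbb{H}_0^1(\Omega))\cap L^2(0,\infty;\mathbb{H}_0^1(\Omega))$, the estimate $\|A\by\|_{\mathbb{H}^{-1}(\Omega)} \le M\|\by\|_V$, the trilinear bound of Proposition \ref{prop:estimates_oseen} applied to $\langle F(\by),\bphi\rangle = s(\by,\by,\bphi)$, and $\tilde{B}\in\mathcal{L}(U,\mathbb{L}^2(\Omega))$ for the term $Bu$, I would derive $|\ell(\bphi)| \le M(\|\by\|_{W_\infty}+\|\by\|_{W_\infty}^2+\|u\|_{L^2(0,\infty;U)})\|\bphi\|_{W_0^{1,1}(0,\infty;\mathbb{H}_0^1)}$, which is exactly the right-hand side of \eqref{eq:estimate_pi}, the quadratic term arising from the nonlinearity. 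Second, $\ell$ vanishes on divergence-free test fields: for $\bphi \in W_0^{1,1}(0,\infty;V)$ an integration by parts in time (legitimate since $\bphi(0)=0$ and $\bphi(t)\to 0$ in $\mathbb{H}_0^1(\Omega)$ while $\by$ stays bounded in $Y$) rewrites the first integral as $\int_0^\infty \langle \dot{\by},\bphi\rangle_{V',V}\,\mathrm{d}t$, and on solenoidal $\bphi$ the Leray projector is transparent, so the two integrals cancel by the assumed equation. Hence $\ell(\bphi)$ depends only on $\divv\bphi$.

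To turn this into a functional of $\divv\bphi$ alone, I would invoke a bounded right inverse $\mathcal{B}\colon L_0^2(\Omega)\to\mathbb{H}_0^1(\Omega)$ of the divergence (Bogovskii operator), available on the smooth domain of Assumption \ref{ass:A3}; applied in the time variable it yields a bounded lift $\mathcal{B}\colon W_0^{1,1}(0,\infty;L_0^2(\Omega))\to W_0^{1,1}(0,\infty;\mathbb{H}_0^1(\Omega))$ with $\divv(\mathcal{B}q)=q$. I would then set $\langle p,q\rangle := \ell(\mathcal{B}q)$: this is well defined by the vanishing property (independence of the lift), bounded by the two inputs, and hence identifies $p\in W^{-1,\infty}(0,\infty;L_0^2(\Omega)) = (W_0^{1,1}(0,\infty;L_0^2(\Omega)))'$ satisfying \eqref{eq:estimate_pi}. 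Decomposing an arbitrary $\bphi$ as $\mathcal{B}(\divv\bphi)$ plus a divergence-free remainder and using the vanishing property once more gives $\ell(\bphi)=\langle p,\divv\bphi\rangle$, which is the weak formulation \eqref{eq:weakNS}; uniqueness follows because surjectivity of $\divv$ forces $\langle p,q\rangle=0$ for all $q$ whenever $\nabla p=0$ in the dual.

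The main obstacle is the interplay between the weak-in-time regularity and the pressure recovery: since $\dot{\by}$ only belongs to $L^2(0,\infty;V')$, de Rham cannot be applied pointwise and the pressure is unavoidably distributional in time, which is what makes the choice of the dual pairing $(W_0^{1,1},W^{-1,\infty})$ and of a lift respecting the $W^{1,1}$-in-time structure the technical crux. A secondary point requiring care is the consistent reading of the projected operators $A$, $F$, $B$ as $\mathbb{H}^{-1}(\Omega)$-valued representatives, so that their pairing against non-solenoidal $\bphi$ is meaningful while remaining transparent to the Leray projector on $V$; getting this bookkeeping right is exactly what makes the cancellation in the vanishing step hold.
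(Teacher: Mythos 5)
Your argument is correct, and it reaches the conclusion by a genuinely different (though related) route. The paper first integrates the equation in time, setting $\bG(t)= \by(t)-\by_0+\int_0^t \bg(s)\,\mathrm{d}s$ with $\bg = A\by - F(\by)+Bu \in L^2(0,\infty;\mathbb{H}^{-1}(\Omega))$; since $\bG(t)$ annihilates $V$ for each fixed $t$, the classical de Rham theorem applies \emph{pointwise in time} to the primitive, producing a continuous pressure primitive $\mathcal{P}\in C([0,\infty);L_0^2(\Omega))$ with $\bG(t)=-\nabla\mathcal{P}(t)$, and the pressure is then defined as the distributional derivative $p=\frac{\mathrm{d}}{\mathrm{d}t}\mathcal{P}$ and extended to $W^{-1,\infty}(0,\infty;L_0^2(\Omega))$. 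You instead bypass the primitive entirely: you define the space-time residual functional $\ell$, verify its boundedness and its vanishing on $W_0^{1,1}(0,\infty;V)$, and obtain $p$ directly by composing $\ell$ with a lift of the divergence. The key analytic ingredient is the same in both proofs, namely the bounded right inverse of the divergence on the smooth domain (your $\mathcal{B}$ is the paper's $\mathcal{K}$ from Boyer--Fabrie), and it is used for the same two purposes (constructing/identifying $p$ and proving uniqueness). What the paper's route buys is the additional structural fact that $p$ is the time derivative of a \emph{continuous} $L_0^2(\Omega)$-valued function, which is the standard way of expressing that the Navier--Stokes pressure is a distribution in time; what your route buys is brevity and a more transparent derivation of the estimate \eqref{eq:estimate_pi}, since the three terms of the bound appear directly in the boundedness estimate for $\ell$. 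Your bookkeeping of the non-projected $\mathbb{H}^{-1}(\Omega)$-valued representatives of $A$, $F$, $B$, and of the decay of the boundary terms in the time integration by parts, matches what the paper needs implicitly, so I see no gap.
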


\begin{proof}
We follow the technique consisting in integrating the state equation, see, e.g., \cite[Chapter V-\S 1]{Boy13} and introduce
\begin{equation}\label{eq:103}
\bG(t)= \by(t) {- \by_0} + \int_0^t \bg(s) \, \mathrm{d} s, \quad \text{with: }
\bg(s)= A \by(s) - F(\by(s)) + Bu(s).
\end{equation}
It can be easily shown that $\bg \in L^2(0,\infty;\mathbb{H}^{-1}(\Omega))$ and that there exists a constant $M>0$ independent of $(\by,u)$ such that
\begin{equation} \label{estimate:g}
\| \bg \|_{L^2(0,\infty;\mathbb{H}^{-1}(\Omega))}
\leq M \big( \| \by \|_{W_\infty} + \| \by \|_{W_\infty}^2 + \| u \|_{L^2(0,\infty;U)} \big).
\end{equation}
This estimate can be obtained with the Cauchy-Schwarz inequality and Proposition \ref{prop:estimates_oseen}(i), which also holds true in $\mathbb{H}^{-1}(\Omega)$ (in place of $V'$). Since $\by \in W_\infty$, it further follows that $\bG$ is a continuous function of time with values in $\mathbb{H}^{-1}(\Omega)$.
Moreover, $\langle \bG(t),\boldsymbol{\psi} \rangle_{\mathbb{H}^{-1}(\Omega),\mathbb{H}_0^1(\Omega)} = 0$ for all $t \in [0,\infty)$ and $\boldsymbol{\psi} \in V$. Hence for all $t \in [0, \infty)$, there exists a unique $\mathcal{P}(t) \in L_0^2(\Omega)$ such that $\bG(t) = -\nabla \mathcal{P}(t)$, see, e.g., \cite[Theorem IV.2.3]{Boy13}. Let us prove that $\mathcal{P} \in C([0,\infty), L^2_0(\Omega))$. Recall first that there exists an operator $\mathcal{K} \in \mathcal{L}(L_0^2(\Omega),\mathbb{H}_0^1(\Omega))$ with the property that
\begin{equation*}
\divv(\mathcal{K} \rho) = \rho, \quad \forall \rho \in L_0^2(\Omega),
\end{equation*}
see \cite[Theorem IV.3.1]{Boy13}. Let $\rho \in L_0^2(\Omega)$ be arbitrary and let $\bphi= \mathcal{K} \rho$. For all $t$ and $\tau$ in $[0, \infty)$, we have
\begin{align*}
\langle\mathcal{P}(t) - \mathcal{P}(\tau), \rho \rangle_{L_0^2(\Omega)}
& = -\langle \nabla \mathcal{P}(t) - \nabla \mathcal{P}(\tau), \bphi \rangle_{\mathbb{H}^{-1}(\Omega),\mathbb{H}_0^1(\Omega)} \\
&= \langle \bG(t) - \bG(\tau), \bphi \rangle_{\mathbb{H}^{-1}(\Omega),\mathbb{H}_0^1(\Omega)} \\
& \leq \| \mathcal{K} \|_{\mathcal{L}(L_0^2(\Omega),\mathbb{H}_0^1(\Omega))} \| \bG(t) - \bG(\tau) \|_{\mathbb{H}^{-1}(\Omega)} \| \rho \|_{L_0^2(\Omega)}.
\end{align*}
It follows that $\| \mathcal{P}(t)-\mathcal{P}(\tau) \|_{L_0^2(\Omega)} \leq M \| \bG(t)-\bG(\tau) \|_{\mathbb{H}^{-1}(\Omega)}$, which concludes the proof of continuity of $\mathcal{P}$.
We now introduce the distributional derivative $p= \frac{\mathrm{d}}{\mathrm{d} t} \mathcal{P}$ and establish that $p \in W^{-1,\infty}(0,\infty;L_0^2(\Omega))$. Let $\rho \in \mathcal{C}_c^\infty(0,\infty;L_0^2(\Omega))$ be arbitrary and set ${\bphi}(t) = \mathcal{K} \rho(t)$. Note that ${\bphi} \in  \mathcal{C}_c^\infty(0,\infty;\mathbb{H}^{1}_0(\Omega))$. We have
\begin{align*}
\langle p, \rho \rangle
= \ & - \int_0^\infty \langle \mathcal{P}(t), \dot{\rho}(t) \rangle_{L_0^2(\Omega)} \, \mathrm{d} t
= - \int_0^\infty \langle \mathcal{P}(t), \divv \dot{\bphi}(t) \rangle_{L_0^2(\Omega)} \, \mathrm{d} t \\
= \ & \int_0^\infty \langle \nabla \mathcal{P}(t), \dot{\bphi}(t) \rangle_{\mathbb{H}^{-1}(\Omega),\mathbb{H}_0^1(\Omega)} \, \mathrm{d} t \\
= \ & - \int_0^\infty \langle \by(t) {- \by_0} - {\textstyle \int_0^t } \bg(s)\, \mathrm{d} s, \dot{\bphi}(t) \rangle_{\mathbb{H}^{-1}(\Omega),\mathbb{H}_0^1(\Omega)} \, \mathrm{d} t \\
= \ & -\int_0^\infty \langle \by(t) {- \by_0}, \dot{\bphi}(t) \rangle_{\mathbb{H}^{-1}(\Omega),\mathbb{H}_0^1(\Omega)}
+ \langle \bg(t), \bphi(t) \rangle_{\mathbb{H}^{-1}(\Omega),\mathbb{H}_0^1(\Omega)} \, \mathrm{d} t.
\end{align*}
Recalling the embedding of $W_0^{1,1}(0,\infty;\mathbb{H}_0^1(\Omega))$ in $L^2(0,\infty;\mathbb{H}_0^1(\Omega))$, we deduce that
\begin{equation*}
\langle p, \rho \rangle
\leq M \big( \| \by \|_{L^\infty(0,\infty;\mathbb{H}^{-1}(\Omega))} + \| \bg \|_{L^2(0,\infty;\mathbb{H}^{-1}(\Omega))} \big) \| \bphi \|_{W_0^{1,1}(0,\infty;\mathbb{H}_0^1(\Omega))}.
\end{equation*}
Using then estimate \eqref{estimate:g}, we obtain that $p$ can be extended to an element of $W^{-1,\infty}(0,\infty;L_0^2(\Omega))$ satisfying estimate \eqref{eq:estimate_pi}.

With the same calculations as above, we can show that for all $\bphi \in W_0^{1,1}(0,\infty;\mathbb{H}_0^1(\Omega))$,
\begin{equation*}
\langle p, \divv \bphi \rangle=
- \int_0^\infty \langle \by(t), \dot{\bphi}(t) \rangle_{\mathbb{H}^{-1}(\Omega),\mathbb{H}_0^1(\Omega)}
+ \langle \bg(t), \bphi(t) \rangle_{\mathbb{H}^{-1}(\Omega),\mathbb{H}_0^1(\Omega)} \, \mathrm{d} t,
\end{equation*}
which proves that $p$ satisfies \eqref{eq:weakNS}. Let us prove the uniqueness. Let $\tilde{p} \in W^{-1,\infty}(0,\infty;L_0^2(\Omega))$ satisfy \eqref{eq:weakNS}. Let $\rho \in W_0^{1,1}(0,\infty;L_0^2(\Omega))$ be arbitrary and let us set $\bphi= \mathcal{K} \rho$. Then, by \eqref{eq:weakNS}, we have
\begin{equation*}
\begin{aligned}
0 &= \langle p- \tilde{p}, \divv \bphi \rangle_{W^{-1,\infty}(0,\infty;L_0^2(\Omega)),W_0^{1,1}(0,\infty;L_0^2(\Omega))} \\
&= \langle p- \tilde{p}, \rho \rangle_{W^{-1,\infty}(0,\infty;L_0^2(\Omega),W_0^{1,1}(0,\infty;L_0^2(\Omega))},
\end{aligned}
\end{equation*}
which proves that $p= \tilde{p}$ and concludes the proof.
\end{proof}

We have the following result, extending Theorem \ref{theo19}.

\begin{proposition}\label{prop:pressure}

Let $d \geq 2$. There exists $M > 0$ such that for all $\mathbf{y}_0 \in Y$ with $\|\mathbf{y}_0 \|_Y \leq \delta_6$,
\begin{equation*}
\| \bar{p} - p_d \|_{W^{-1,\infty}(L_0^2)} \leq M \| \mathbf{y}_0 \|_Y^d,
\end{equation*}
where $\bar{p}$ and $p_d$ denote the pressure terms associated with $(\bar{\by},\bar{u})$ and $(\by_d,u_d)$ respectively.
\end{proposition}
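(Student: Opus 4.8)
The plan is to exploit the fact that the pressure reconstruction of Lemma \ref{lemma:pressure} is \emph{linear} in its data, so that $\bar{p}-p_d$ can be obtained by the very same construction applied to the difference of the two state--control pairs. Both pairs satisfy an equation of the form $\dot{\by}=A\by-F(\by)+Bu$: the pair $(\bar{\by},\bar{u})=(\mathcal{Y}(\by_0),\mathcal{U}(\by_0))$ because it solves \eqref{eq:NLQprob}, and the pair $(\by_d,u_d)$, with $u_d(t):=u_d(\by_d(t))$ as in \eqref{eq:opvsclosed}, because of \eqref{eq:cls}. By Theorem \ref{thm:cls_well_posed} and Theorem \ref{theo19} both pairs lie in $W_\infty\times L^2(0,\infty;U)$, so Lemma \ref{lemma:pressure} produces the unique pressures $\bar{p},p_d\in W^{-1,\infty}(0,\infty;L_0^2(\Omega))$, together with continuous potentials $\bar{\mathcal{P}},\mathcal{P}_d$ satisfying $\bar{\bG}=-\nabla\bar{\mathcal{P}}$ and $\bG_d=-\nabla\mathcal{P}_d$ in the notation of \eqref{eq:103}.

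First I would subtract the two reconstructions. Let $\bar{\bg}$ and $\bg_d$ denote the source terms $\bg$ from \eqref{eq:103} associated with the two pairs, and set $\be:=\bar{\by}-\by_d$ and $\bg^{\mathrm{diff}}:=\bar{\bg}-\bg_d=A\be-(F(\bar{\by})-F(\by_d))+B(\bar{u}-u_d)$. Since both states share the initial datum $\by_0$, it cancels and $\bar{\bG}-\bG_d=\be+\int_0^{\cdot}\bg^{\mathrm{diff}}(s)\dd s$. Because $\langle\dot{\bar{\by}}-\bar{\bg},\boldsymbol{\psi}\rangle=\langle\dot{\by}_d-\bg_d,\boldsymbol{\psi}\rangle=0$ for all $\boldsymbol{\psi}\in V$, the difference still satisfies $\langle\bar{\bG}(t)-\bG_d(t),\boldsymbol{\psi}\rangle=0$ for all $\boldsymbol{\psi}\in V$, which is the only structural property used in Lemma \ref{lemma:pressure}. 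Hence $\bar{\mathcal{P}}-\mathcal{P}_d$ is the potential associated with the difference data, and repeating the linear estimate established at the end of the proof of Lemma \ref{lemma:pressure} yields
\begin{equation*}
\| \bar{p}-p_d \|_{W^{-1,\infty}(0,\infty;L_0^2(\Omega))}
\leq M\big( \| \be \|_{L^\infty(0,\infty;\mathbb{H}^{-1}(\Omega))} + \| \bg^{\mathrm{diff}} \|_{L^2(0,\infty;\mathbb{H}^{-1}(\Omega))} \big).
\end{equation*}

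It then remains to bound the right-hand side by $M\|\by_0\|_Y^d$ via Theorem \ref{theo19}. The state term obeys $\|\be\|_{L^\infty(0,\infty;\mathbb{H}^{-1}(\Omega))}\le M\|\be\|_{W_\infty}\le M\|\by_0\|_Y^d$ through the embedding $W_\infty\hookrightarrow L^\infty(0,\infty;Y)\hookrightarrow L^\infty(0,\infty;\mathbb{H}^{-1}(\Omega))$. In $\bg^{\mathrm{diff}}$ the two linear contributions are controlled directly: $\|A\be\|_{L^2(0,\infty;\mathbb{H}^{-1}(\Omega))}\le M\|\be\|_{W_\infty}\le M\|\by_0\|_Y^d$ by boundedness of the Stokes-Oseen operator into $\mathbb{H}^{-1}(\Omega)$ (as already used in Lemma \ref{lemma:pressure}), and $\|B(\bar{u}-u_d)\|_{L^2(0,\infty;\mathbb{H}^{-1}(\Omega))}\le M\|\bar{u}-u_d\|_{L^2(0,\infty;U)}\le M\|\by_0\|_Y^d$. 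The nonlinear contribution is treated by the bilinear structure $F=N(\cdot,\cdot)$, writing $F(\bar{\by})-F(\by_d)=N(\be,\bar{\by})+N(\by_d,\be)$ and invoking the $\mathbb{H}^{-1}(\Omega)$-version of Corollary \ref{cor:estimates_oseen_time_var} (that is, Proposition \ref{prop:estimates_oseen}(i) in $\mathbb{H}^{-1}(\Omega)$); since $\max(\|\bar{\by}\|_{W_\infty},\|\by_d\|_{W_\infty})\le M\|\by_0\|_Y$, this gives $\|F(\bar{\by})-F(\by_d)\|_{L^2(0,\infty;\mathbb{H}^{-1}(\Omega))}\le M\|\by_0\|_Y\,\|\be\|_{W_\infty}\le M\|\by_0\|_Y^{d+1}$, which for $\|\by_0\|_Y\le\delta_6\le 1$ is dominated by $M\|\by_0\|_Y^d$. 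Collecting the three bounds proves the claim.

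The main obstacle is conceptual rather than computational: the difference $\be=\bar{\by}-\by_d$ does \emph{not} solve a Navier-Stokes-type equation, so Lemma \ref{lemma:pressure} cannot be applied as a black box to $\be$. The resolution is the observation above that only the annihilation of $V$ by $\dot{\be}-\bg^{\mathrm{diff}}$ and the linearity of the pressure map are actually needed, and both are preserved under subtraction. The single genuinely nonlinear term $F(\bar{\by})-F(\by_d)$ is the only place requiring care, but its bilinear Lipschitz estimate makes it enter at the harmless order $d+1$ rather than $d$.
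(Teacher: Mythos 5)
Your proof is correct and follows essentially the same route as the paper: both subtract the two source terms $\bar{\bg}-\bg_d$, bound the difference in $L^2(0,\infty;\mathbb{H}^{-1}(\Omega))$ by $M\|\by_0\|_Y^d$ via Theorem \ref{theo19}, and then rerun the linear pressure-reconstruction estimate of Lemma \ref{lemma:pressure} on the difference data. The paper's own proof is just a two-line sketch of this; your write-up supplies the verification (linearity of the map $\bG\mapsto p$, preservation of the annihilation of $V$ under subtraction, and the order-$(d+1)$ bilinear estimate for $F(\bar{\by})-F(\by_d)$) that the paper leaves to the reader.
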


\begin{proof}
We have introduced in the proof of Lemma \ref{lemma:pressure} the term $\bg$ associated with a feasible pair $(\by,u)$. Let us denote by $\bar{\bg}$ and $\bg_d$ the corresponding terms associated with $(\bar{\by},\bar{u})$ and $(\by_d,u_d)$. One can verify that as a consequence of Theorem \ref{theo19},
$\| \bar{\bg} - \bg_d \|_{L^2(0,\infty;\mathbb{H}^{-1}(\Omega))}
\leq M \| \by_0 \|_Y^d$.
Proposition \ref{prop:pressure} follows then with similar calculations to those performed in the proof of Lemma \ref{lemma:pressure}.
\end{proof}

\section{A numerical example}

In this section, we present numerical simulations for the two-dimensional Navier-Stokes equations and computed feedback laws of order 2 and 3. The discretization procedure and the example setups are classical and are taken from \cite{BehBH17}. The main purpose is to show that the computation of higher order feedback  laws is possible and, depending on the chosen parameters, visible differences to a Riccati-based feedback law can be observed.

\subsection{Setup and discretization}

We briefly summarize the numerical implementation provided in \cite{BehBH17}. Therein a \emph{Taylor-Hood} $P_{2}$-$P_{1}$ finite element discretization for a two dimensional wake behind a cylinder is discussed. The computational domain $\Omega=(0,2.2)\times (0,0.41)$ as well as a non uniform grid are shown in Figure \ref{fig:cylinder_grid}. For all simulations, we use the \emph{Reynolds number} $\mathrm{Re}:=\frac{1}{\nu}=90$ and the parabolic inflow profile discussed in \cite{BehBH17}.
\begin{figure}
 \begin{center}
	 \includegraphics[scale=1.6]{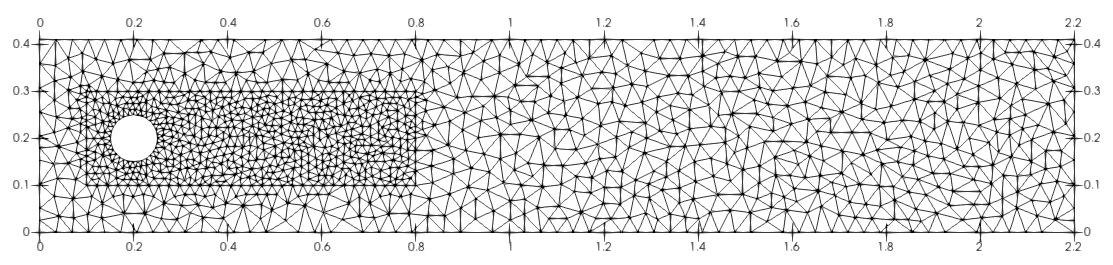}
 \end{center}	
  \caption{Geometry and non uniform grid.}
 \label{fig:cylinder_grid}
\end{figure}
For the upper and lower end of the geometry, \emph{no slip boundary conditions} are employed. The outflow is modeled by \emph{do nothing boundary conditions} on the right end of the geometry. For the desired stabilization, we utilize a distributed, separable control acting in the control domain $\Omega_{c}:= [0.27,0.32]\times [0.15,0.25]$. In particular, the control operator is of the form
\begin{align*}
Bu=\sum_{\ell=1}^3	\begin{bmatrix} 0 \\ w_{\ell}(x_2) \end{bmatrix} u_{\ell}(t) + \begin{bmatrix} w_{\ell}(x_2) \\ 0 \end{bmatrix} u_{\ell+3}(t) ,
\end{align*}
where the control shape functions $w_{1},w_{2}$ and $w_{3}$ are piecewise linear functions which are constant along the $x_1$-direction.

 The finite element discretization is computed in \emph{FEniCS} and the resulting matrices associated with the spatial semidiscretization are exported to \mbox{MATLAB}. As described in detail in \cite{BehBH17}, the (spatially) discrete system takes the form
 \begin{equation}\label{eq:discrete_notshifted}
\begin{aligned}
	E\dot{z}(t)&= -Kz(t) + H (z(t)\otimes z(t)) + Bu(t) + Gq(t)+ f_{z}, \\
	0&= G^{T}z(t) + f_{q},
\end{aligned}
\end{equation}
where $E,K \in \mathbb R^{n_v \times n_{v}}$ are the mass and stiffness matrices, $G^T \in \mathbb R^{n_p \times n_v}$ represents the discrete divergence operator, the tensor matricization $H \in \mathbb R^{n_{v}\times n_{v}^2}$ represents the trilinear form \eqref{eq:trilinear_form} and $B\in \mathbb R^{n_v \times 6}$ is the discrete control operator. Note that $H$ can be constructed in such a way that $H (z_{1}\otimes z_{2})=H(z_{2}\otimes z_{1})$ for any $z_{1},z_{2} \in \mathbb R^{n_{v}}$. The time invariant vectors $f_{z} \in \mathbb R^{n_v}$ and $f_{q}\in \mathbb R^{n_p}$ are due to the elimination of the boundary nodes. The following results correspond to a discretization level with $n_{v}=9356$ and $n_{p}=1289$. The velocity profile of the unstable steady state solution $\bar{z}$ shown in Figure \ref{fig:steady_flow} is obtained by a Picard iteration applied to the uncontrolled stationary system, i.e., system \eqref{eq:discrete_notshifted} with $\dot{z}(t)=0$ and $u(t)=0$.
To illustrate that the
controller stabilizes this steady state solution, we start the transient simulations of the closed-loop systems from the slightly randomly perturbed
steady state $z(0)=\bar{z}+\frac{\|\bar{z}\|_2}{2000}\cdot \texttt{randn}(n_v,1)$.

\begin{figure}[htb]
 \begin{center}
	 \includegraphics[scale=1.8]{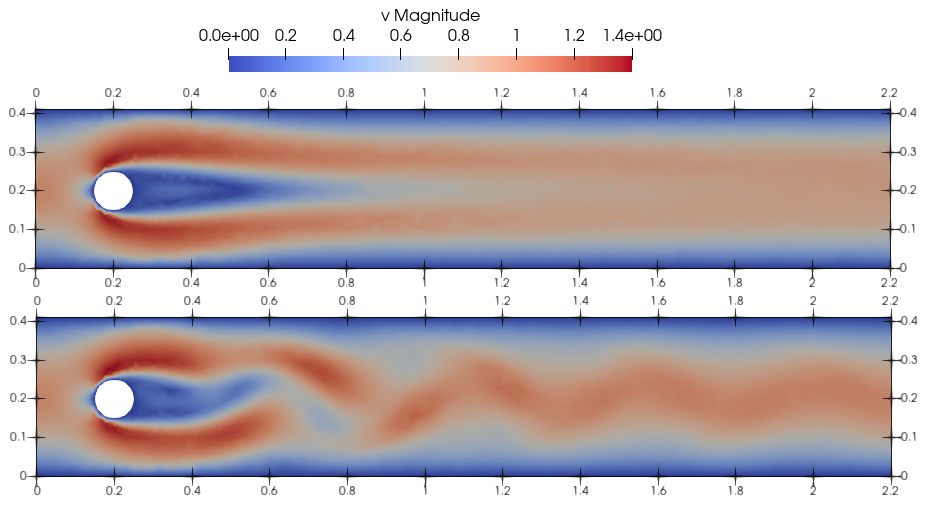}
 \end{center}	
 \caption{The steady state and a snapshot of the transient flow regime.}
 \label{fig:steady_flow}
\end{figure}

\subsection{Reformulation as an ODE system}

System \eqref{eq:discrete_notshifted} is a system of differential-algebraic equations (DAEs) and hence the results from above are not readily applicable. While a thorough analysis in the framework of control of DAEs is certainly of interest, at this point we employ a reformulation initially proposed in \cite{HeiSS08} that allows to rewrite the dynamics as a set of ODEs for the velocity vector $z$. As in \eqref{eq:gen_hom_NSE}, we consider the shifted variables $y=z-\bar{z}$ and $p=q-\bar{q}$, respectively. Consequently, we obtain
 \begin{equation}\label{eq:discrete_shifted}
\begin{aligned}
	E\dot{y}(t)&= Ay(t) + H (y(t)\otimes y(t)) + Bu(t) + Gp(t), \\
	0&= G^{T}y(t),
\end{aligned}
\end{equation}
where $A=-K+H(\bar{z}\otimes I+I\otimes \bar{z})$. Let us note that the second equation implies $G^{T}\dot{y}(t)=0$. Following \cite[Section 3]{HeiSS08}, from the first equation, we thus obtain
\begin{align*}
0=G^{T}\dot{y}(t) = G^{T}E^{-1}\left( A y(t) + H (y(t)\otimes y(t)) + Bu(t) + Gp(t) \right).
\end{align*}
We can now eliminate the pressure from \eqref{eq:discrete_shifted} using  the relation
\begin{align*}
	 p(t)= - (G^{T}E^{-1}G)^{-1}G^{T}E^{-1} \left( Ay (t) + H(y(t)\otimes y(t)) + Bu(t)\right).
\end{align*}
With the notation $P=I- G(G^{T}E^{-1}G)^{-1}G^{T}E^{-1}$ this yields the system
\begin{align*}
	E \dot{y}(t) = PAy(t) + P H(y(t)\otimes y(t)) + PBu(t).
\end{align*}
In fact, as has been discussed in \cite{BenSSW13}, the matrix $P=P^{2}$ as a discrete realization of the \emph{Leray projector}. Since $G^Ty=0$, we have  $P^{T}y(t)=y(t)$ so that we can multiply the last equation by $P$ to obtain
\begin{align*}
  (PEP^{T}) \dot{y}(t) = (PAP^{T})y(t) + \left(PHP^{T}\otimes P^{T}\right)(y(t)\otimes y(t)) + (PB) u(t).
\end{align*}
Finally, by means of a decomposition $P=\Theta_{\ell}\Theta_{r}^{T}$ with $\Theta_{\ell}^{T} \Theta_{r}=I$ we can project onto the $n_{v}-n_{p}$ dimensional subspace $\mathrm{range}(P)$ and arrive at the ODE system
\begin{align}\label{eq:disc_divergence_free}
\underbrace{(\Theta_{r}^{T}E\Theta_{r})}_{\widetilde{E}} \dot{\tilde{y}}(t) = \underbrace{(\Theta_{r}^{T }A \Theta_{r})}_{\widetilde{A}} \tilde{y}(t) +\underbrace{(\Theta_{r}^{T} H \Theta_{r}\otimes \Theta_{r})}_{\widetilde{H}} \tilde{y}(t)\otimes \tilde{y}(t) + \underbrace{(\Theta_{r}^{T} B)}_{\widetilde{B}} u(t),
\end{align}
where $\tilde{y}=\Theta_{\ell}^{T} y(t)$. For the initialization, we use $\tilde{y}(0)=\Theta_{\ell}^{T} y_{0}$. At this point, we emphasize that the explicit formulas yield dense matrices and thus are rather a theoretical tool. In particular, an explicit computation of $\widetilde{H}$ is infeasible for the problem dimension considered here. As a remedy, we work with an implementation that applies the above operations whenever a matrix vector multiplication is needed.

\subsection{Computing the feedback gain}

With the previous considerations in mind, we focus on the stabilization problem
\begin{equation} \label{eq:discrete_NLQprob}
\inf_{\begin{subarray}{c}   u \in L^2(0,\infty;\mathbb R^{6}) \end{subarray}} J(\tilde{y}_{0},u), \quad \text{subject to: } e(\tilde{y}_{u},u)= (0,\tilde{y}_0)
\end{equation}
where
\begin{align*}
J(\tilde{y}_{u},u)= & \frac{1}{2} \int_0^\infty \| \Theta_{r}\tilde{y}_{u}(t) \|^2_{\mathbb R^{n_{v}}} \dd t + \frac{\alpha}{2} \int_0^\infty \| u(t) \|_{\mathbb R^{6}}^2 \dd t \\
e(\tilde{y}_{u},u)= & \big( \widetilde{E}\dot{\tilde{y}}_{u}-(\widetilde{A}\tilde{y}_{u} + \widetilde{H}(\tilde{y}_{u}\otimes \tilde{y}_{u}) + \widetilde{B}u), \tilde{y}(0) \big).
\end{align*}
We illustrate the effect of higher order feedback laws by computing the first two non trivial derivatives $D^{2}\mV(0)$ and $D^{3}\mV(0)$, respectively. For the computation of $D^{2}\mV(0)\equiv \Pi\in \mathbb R^{(n_{v}-{n_p})\times (n_{v}-n_{p})}$, we have to solve the algebraic matrix Riccati equation
\begin{align*}
	\widetilde{A}^{T} \Pi \widetilde{E} + \widetilde{E}^{T}\Pi \widetilde{A} - \widetilde{E}^{T} \Pi \widetilde{B} \widetilde{B}^{T} \Pi \widetilde{E} + \Theta_{r}^{T} \Theta_{r} = 0,
\end{align*}
which in our case was done by means of the \mbox{MATLAB} function $\texttt{care}$. For the third order tensor $D^{3}\mV(0)\equiv \mathcal{X} \in \mathbb R^{(n_{v}-n_{p})^{3}}$ we have to solve a linear system of the form  $\mathcal{A}^{T}\mathcal{X} = \mathcal{F}$
where
\begin{equation}\label{eq:numerics_aux1}
\begin{aligned}
\mathcal{A}&= \widetilde{E}\otimes \widetilde{E} \otimes \widetilde{A}_{\pi}	 + \widetilde{E} \otimes \widetilde{A}_{\pi}\otimes \widetilde{E}+ \widetilde{A}_{\pi}\otimes \widetilde{E} \otimes \widetilde{E}, \ \ \ \widetilde{A}_{\pi} = \widetilde{A}-\frac{1}{\alpha}\widetilde{B}\widetilde{B}^{T} \Pi \widetilde{E}, \\
\mathcal{F}&=-2\left(\widetilde{H}^{T}\otimes \widetilde{E}^{T} + \widetilde{E}^{T} \otimes \widetilde{H}^{T}+ (I\otimes \mathcal{P}^{T})(\widetilde{H}^{T}\otimes \widetilde{E}^{T}) \right) \pi,
\end{aligned}
\end{equation}
where $\pi=\mathrm{vec}(\Pi)$ denotes the vectorization of $\Pi$ and the permutation matrix $\mathcal{P}$ is given by
\begin{align*}
  \mathcal{P}= \begin{bmatrix} I\otimes  e_{1},\dots,I\otimes  e_{n_v-n_p} \end{bmatrix} \in \mathbb R^{(n_v-n_p)^{2} \times (n_v-n_p)^{2} }.
\end{align*}
Let us emphasize that $\mathcal{F}$ is the discrete realization of the term $\mathcal{R}_{3}$ in \eqref{eq:Lyapunov1}. In particular, the tensor $\mathcal{F}$ is symmetric. Note that computing a solution $\mathcal{X}$ to $\mathcal{A}^{T}\mathcal{X}=\mathcal{F}$ is infeasible without using further tools such as model order reduction or tensor calculus as storing the vector $\mathcal{X}\in \mathbb R^{(n_v-n_p)^{3}}$ already requires more than $4$ TB of data. As a remedy, we aim for a direct computation of the corresponding feedback gain
\begin{align}\label{eq:numerics_aux2}
\widetilde{K} = (\widetilde{E}^{T} \otimes \widetilde{E}^{T} \otimes \widetilde{B}^{T})\mathcal{X}
\end{align}
without explicitly computing $\mathcal{X}$. With this in mind, we proceed as in \cite{BreKP17c} and utilize a quadrature-based approximation that has been analyzed in \cite{Gra04}. From \cite[Lemma 3]{Gra04}, it follows that
\begin{align*}
	\mathcal{A}^{-1}  = -\int_{0}^{\infty}  \left( e^{t \widetilde{E}^{-1} \widetilde{A}_{\pi}} \widetilde{E}^{-1} \right) \otimes
	 \left( e^{t \widetilde{E}^{-1} \widetilde{A}_{\pi}} \widetilde{E}^{-1} \right)
	 \otimes  \left( e^{t \widetilde{E}^{-1} \widetilde{A}_{\pi}} \widetilde{E}^{-1} \right) \,\mathrm{d}t.
\end{align*}
As shown in \cite[Theorem 9]{Gra04}, the previous integral can be well approximated by a tensor sum of the form
\begin{align}\label{eq:numerics_aux3}
\mathcal{A}^{-1} \approx - \sum_{j=-r}^{r} \frac{2 w_{j}}{\lambda} \left( e^{\frac{t_{j}}{\lambda} \widetilde{E}^{-1} \widetilde{A}_{\pi}} \widetilde{E}^{-1} \right)
\otimes \left( e^{\frac{t_{j}}{\lambda} \widetilde{E}^{-1} \widetilde{A}_{\pi}} \widetilde{E}^{-1} \right) \otimes \left( e^{\frac{t_{j}}{\lambda} \widetilde{E}^{-1} \widetilde{A}_{\pi}} \widetilde{E}^{-1} \right)
\end{align}
where $t_{j}$ and $w_{j}$ are suitable quadrature points and weights and $\lambda$ denotes a constant determined by the spectrum of the matrix pencil $(\widetilde{E},\widetilde{A})$. Combining the representation in \eqref{eq:numerics_aux1}, \eqref{eq:numerics_aux2} and \eqref{eq:numerics_aux3}, we obtain the following approximation formula for the feedback gain
\begin{align*}
	\widetilde{K} &=-\sum_{j=-r}^{r} \frac{2w_{j}}{\lambda}   \left(    (e^{\frac{t_{j}}{\lambda} \widetilde{E}^{-1} \widetilde{A}_{\pi}})^{T}\right)
\otimes \left( (e^{\frac{t_{j}}{\lambda} \widetilde{E}^{-1} \widetilde{A}_{\pi}})^{T} \right) \otimes \left( \widetilde{B}^{T} \widetilde{E}^{-T} (e^{\frac{t_{j}}{\lambda} \widetilde{E}^{-1} \widetilde{A}_{\pi}})^{T} \right)\mathcal{F} \\[1ex]
&= \sum_{j=-r}^{r} \frac{4w_{j}}{\lambda}   \left(    (e^{\frac{t_{j}}{\lambda} \widetilde{E}^{-1} \widetilde{A}_{\pi}})^{T}\right)
\otimes \left( (e^{\frac{t_{j}}{\lambda} \widetilde{E}^{-1} \widetilde{A}_{\pi}})^{T} \right) \otimes \left( \widetilde{B}^{T} \widetilde{E}^{-T} (e^{\frac{t_{j}}{\lambda} \widetilde{E}^{-1} \widetilde{A}_{\pi}})^{T} \right)
\\ &\qquad  \times
\left(\widetilde{H}^{T}\otimes \widetilde{E}^{T} + \widetilde{E}^{T} \otimes \widetilde{H}^{T}+ (I\otimes \mathcal{P}^{T})(\widetilde{H}^{T}\otimes \widetilde{E}^{T}) \right) \pi,
\end{align*}
with $r=30$ in the numerical examples.
By use of algebraic manipulations such as reshaping and transposition of matrices, the computation of the permutation matrix $\mathcal{P}$ as well as computation of the dense matricization $\widetilde{H}$ can be avoided. As a consequence, we obtain an approximation of $\widetilde{K}\in \mathbb R^{6 (n_{v}-n_{p})^{2}}$ whose storage requires less than 4 GB of data. Let us point out that the above considerations do not fully break the curse of dimensionality but nevertheless allow us to compute a third order feedback law even for a spatially discretized PDE. For the simulation of the time-varying systems, we make use of the MATLAB function \texttt{ode23} with the standard relative error tolerance $10^{-3}$. In each time step, the control laws $u_2(\tilde{y})$ and $u_{3}(\tilde{y})$ are obtained via
\begin{align*}
	u_2(\tilde{y}) &=  -\frac{1}{\alpha} \widetilde{B}^T\Pi \widetilde{E}\tilde{y}, \\
	u_{3}(\tilde{y}) &= -\frac{1}{\alpha} \widetilde{B}^T\Pi \widetilde{E}\tilde{y} -\frac{1}{\alpha} \left(I_{6} \otimes \tilde{y}^{T} \otimes \tilde{y}^{T}\right) \widetilde{K},
\end{align*}
where $I_{6}$ denotes the identity matrix for the control space $\mathbb R^{6}$.

\subsection{Results}

Below, we present a numerical comparison for two different values of $\alpha$. In Figure \ref{fig:control_big_alpha}, the control laws corresponding to \eqref{eq:discrete_NLQprob} with $\alpha=1$ are shown. We observe that both feedback laws $u_{2}$ and $u_{3}$, respectively, exhibit a similar behavior and create  vortices which induce the desired control. Indeed, the control velocities in $x_{1}$-direction are of opposite sign (with the centered velocitiy field being negligible) while the control velocities in $x_{2}$-direction all have the same sign.

\begin{figure}[ht]
 	 \includegraphics[scale=0.5]{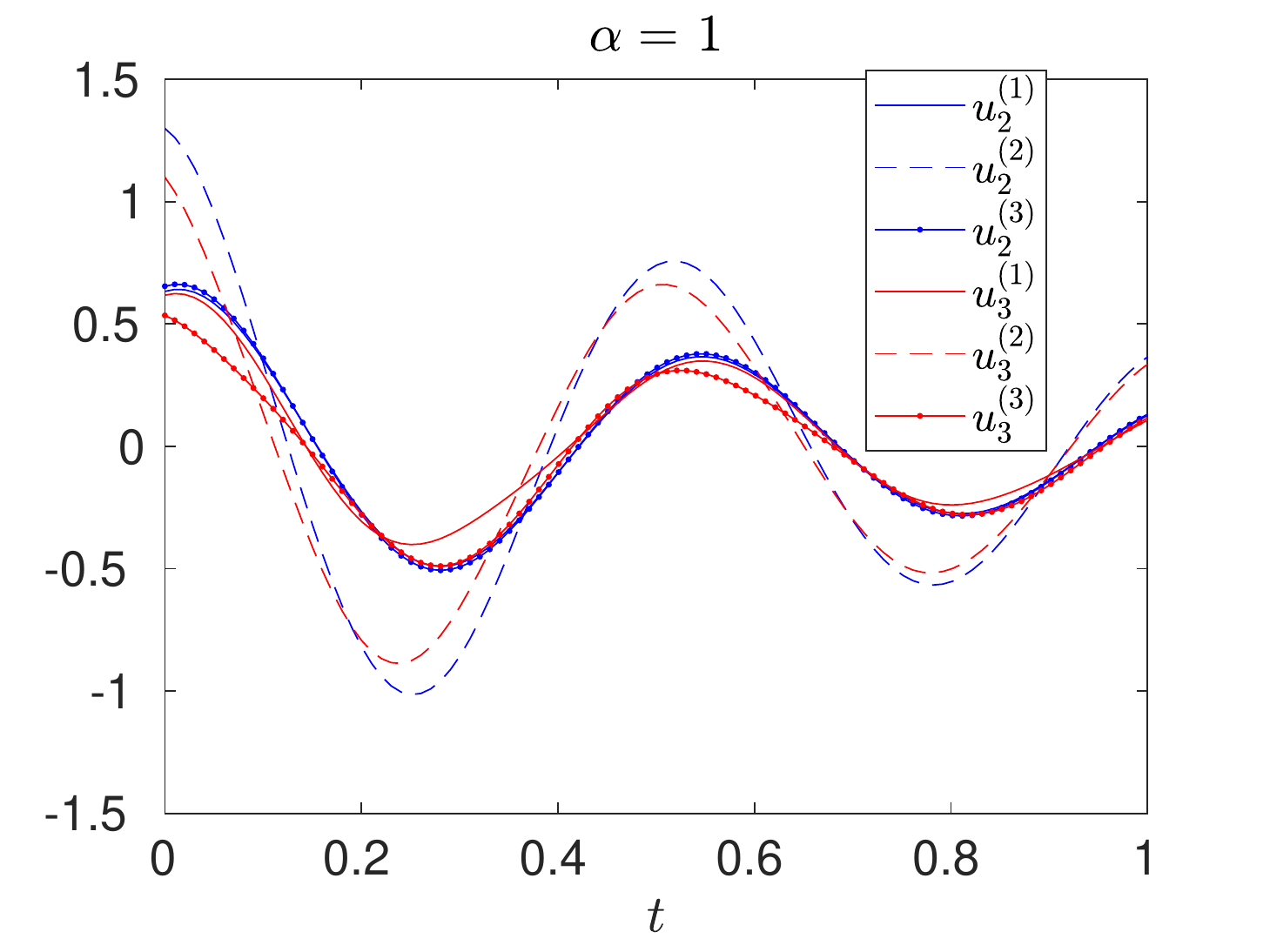}
	 \includegraphics[scale=0.5]{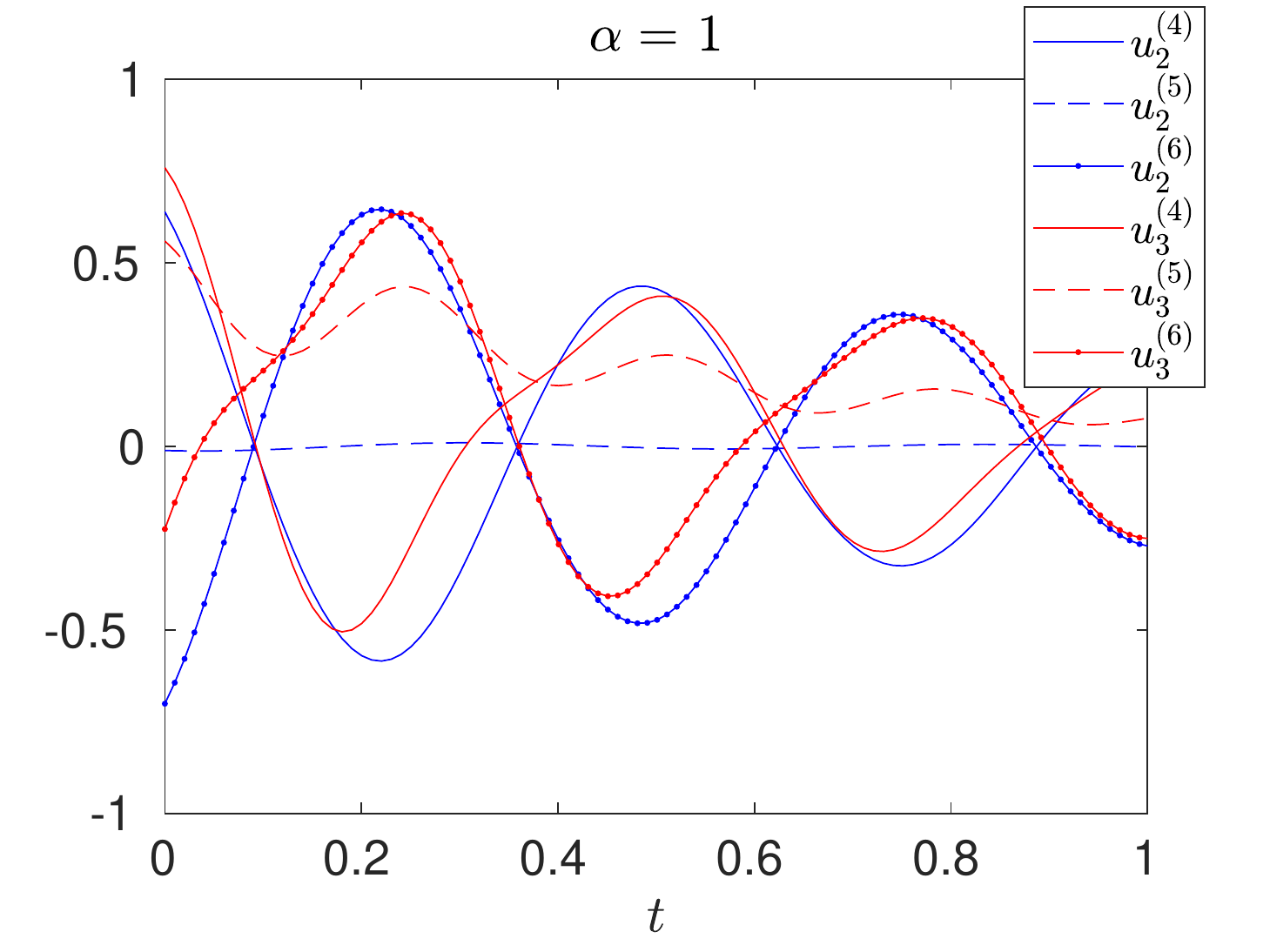}
 \caption{Control laws in $x_{2}$ (left) and $x_{1}$-direction (right) for $\alpha=1$.}
  \label{fig:control_big_alpha}
\end{figure}

For $\alpha=10^{-4}$, Figure \ref{fig:control_small_alpha} shows more visible differences between the control laws.

\begin{figure}[ht]
 	 \includegraphics[scale=0.5]{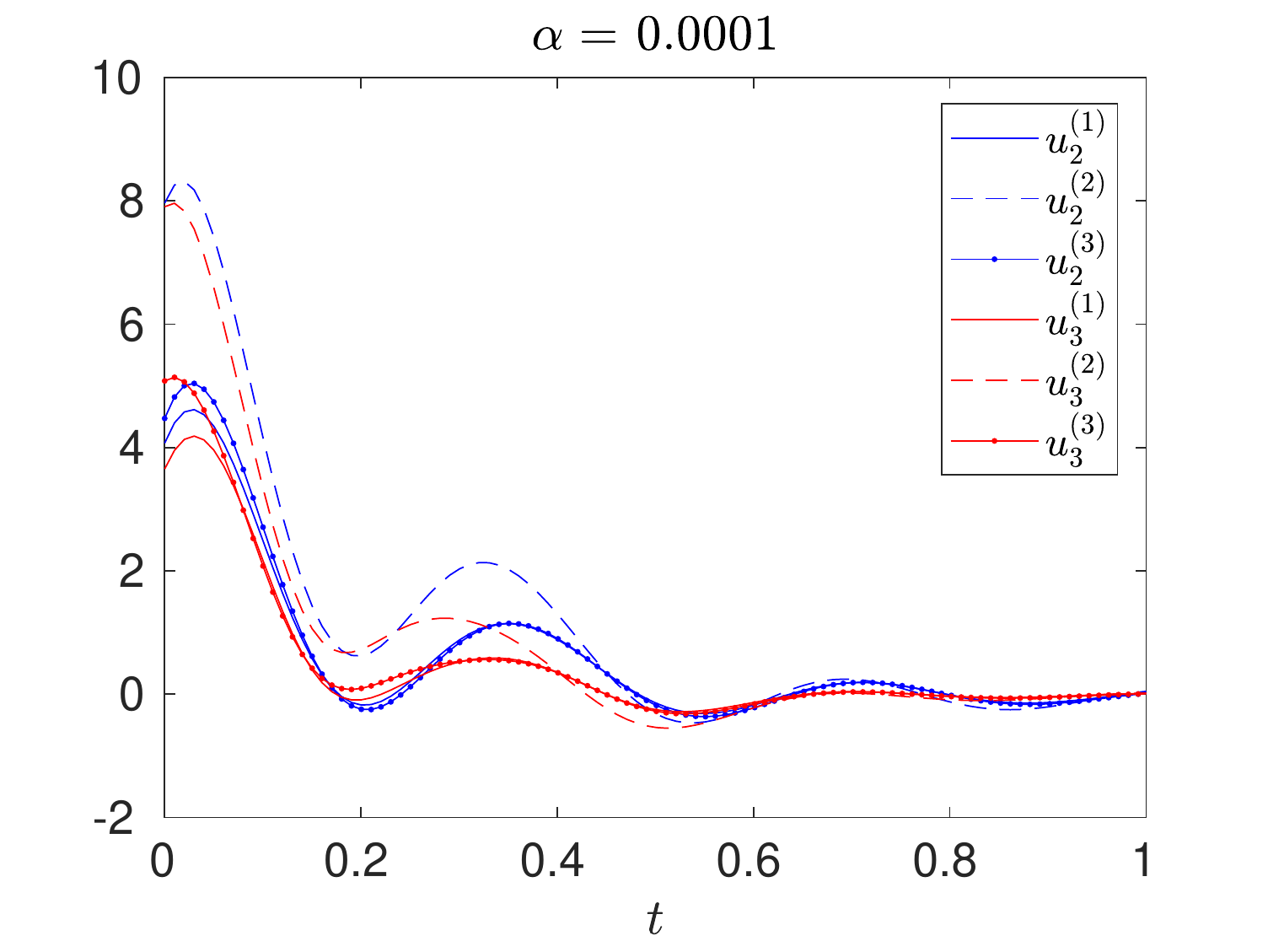}
	 \includegraphics[scale=0.5]{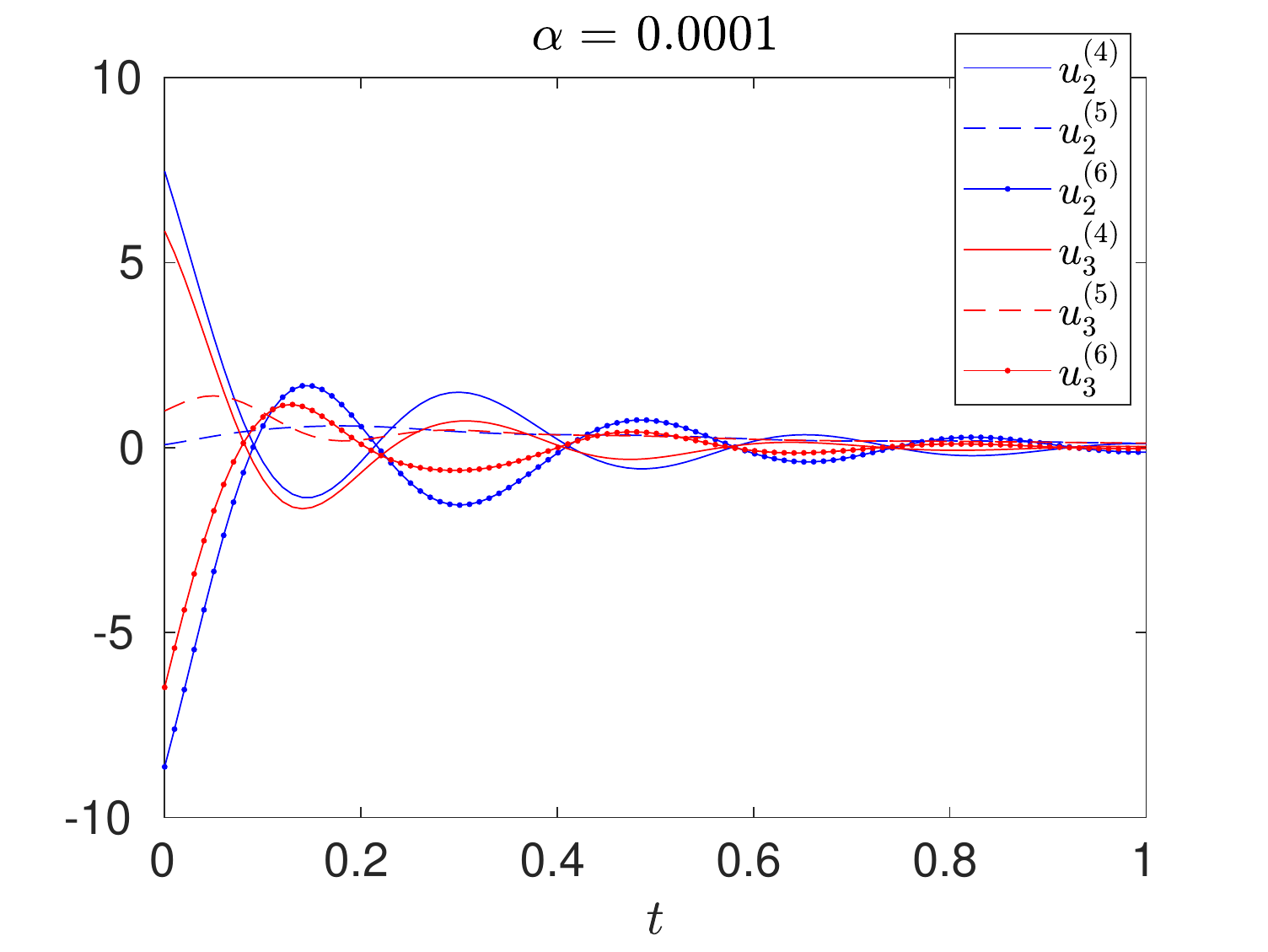}
 \caption{Control laws in $x_{2}$ (left) and $x_{1}$-direction (right) for $\alpha=10^{-4}$.}
 \label{fig:control_small_alpha}
\end{figure}

It would certainly be of interest to investigate the numerical convergence behavior as the order of the control laws increases. At the moment, however this is out of reach, and could be based on model reduction techniques in an independent numerical endeavor.
In Figure \ref{fig:control_small_alpha}, we observe that the amplitudes of the  $u_{3}$ controls decay more rapidly than those of the $u_{2}$ controls.
This is consistent with Figure  \ref{fig:control_norm}, where we compare the dynamical behavior of $\|u_{2}\|_{2}^{2}$ and $\|u_{3}\|_{2}^{2}$. Let us emphasize that for $\alpha=10^{-4}$, for all $t$, the norm of the control law $u_{3}(t)$ is smaller than the one of $u_{2}(t)$. For the values of the cost functionals, we obtain
\begin{align*}
J(\tilde{y}_{u_2},u_2)&=0.9546, && \hspace{-3cm} J(\tilde{y}_{u_3},u_3)=0.8432, \quad \text{for } \alpha=1, \\
J(\tilde{y}_{u_2},u_2)&=0.0128, && \hspace{-3cm}J(\tilde{y}_{u_3},u_3)=0.0125, \quad \text{for } \alpha=10^{-4},
\end{align*}
which  indicates that higher order feedback laws can be of interest for feedback stabilization.


\begin{figure}[ht]
	\includegraphics[scale=0.5]{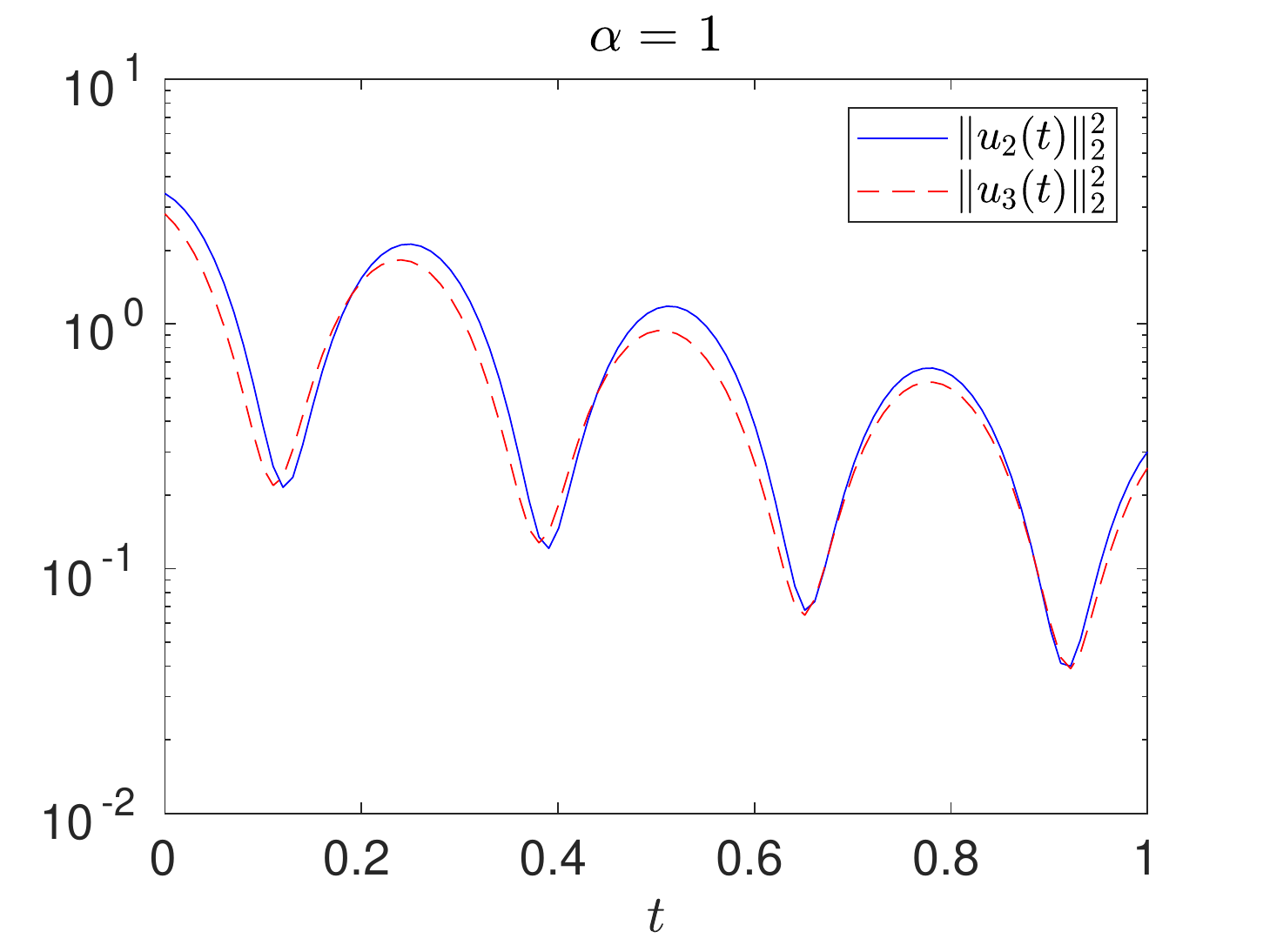}
	\includegraphics[scale=0.5]{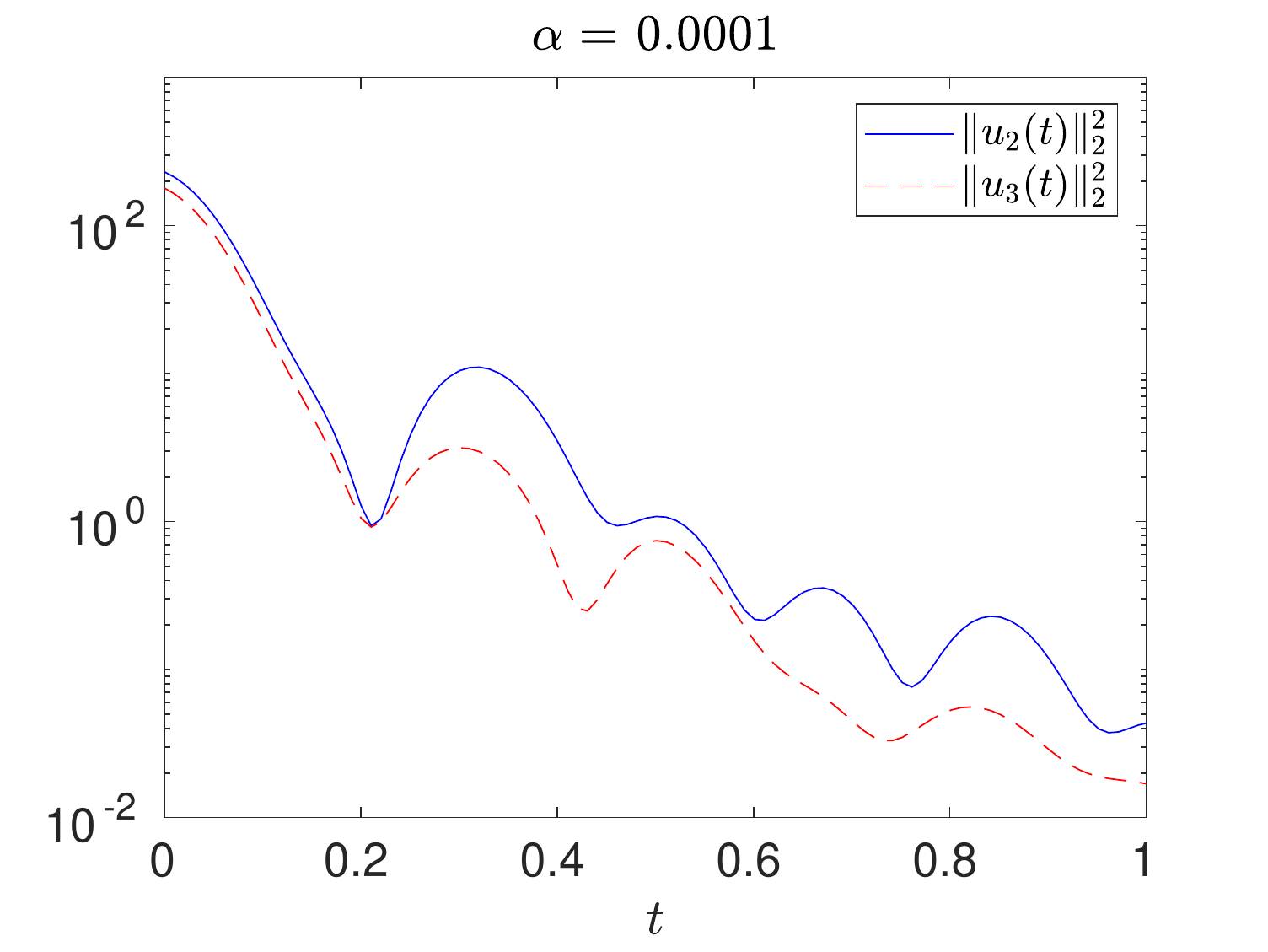}
	\caption{Dynamical behavior of the control norms for $\alpha=1$ and $\alpha=10^{-4}$.}
	\label{fig:control_norm}
\end{figure}

\section{Outlook}

In the present paper we demonstrated that the approach that we carried out for obtaining Taylor approximations to the value function of optimal control problems related to the Fokker-Planck equation, is also applicable for optimal control of the Navier-Stokes equations in dimension two. The question arises to which extent analogous results can be obtained for dimension three and for boundary control problems. In dimension three the situation will be significantly different from that of the current paper. It will not be possible to work with weak variational solutions. Rather one has to resort to strong variational solutions, and thus one can expect at best that the value function is smooth on $V$ rather than on $Y$. This leads to  difficulties for the operator representations of the derivatives of the value function. Alternatively one can start by analyzing \eqref{eq:Lyapunov1} as equations for abstract multilinear forms $D^k\mathcal{V}(0)$, which are not necessarily obtained of derivatives of $\mathcal{V}$. This is an approach which we plan to follow.

\subsection*{Acknowledgement}

This work was partly supported by the ERC advanced grant 668998 (OCLOC) under
the EU's H2020 research program.
The authors would like to thank Jan Heiland for making available his finite element based code for solving the state equation
as well as many helpful and interesting discussions on the
numerical examples.

\appendix

\section{Appendix}
The appendix is dedicated to the proof of Proposition \ref{proposition:more_reg}.

We follow the notation from, e.g., \cite{BT11} and define the following spaces
\begin{align*}
  V_n^s &:= \left\{ \by \in \bbH^s(\Omega)\, | \, \divv \by =0, \, \by \cdot \vec{n}=0 \text { on } \Gamma \right\}, \ s \ge 0, \\
  V_0^s&:= V_n^s , s \in [0,\frac{1}{2}), \\
 V_0^s&:= \left\{ \by \in \bbH^s(\Omega)\, | \, \divv \by =0, \, \by =0 \text{ on } \Gamma \right\}, \ s > \frac{1}{2}.
\end{align*}
Moreover, we consider $A_{\alpha} := A-\alpha I $
where $A$ is the Stokes-Oseen operator and $\alpha$ is such that $A_{\alpha}$ generates an exponentially stable and contractive semigroup on $Y$. From \cite[Theorem 20]{BT11}, let us recall that
\begin{align*}
\mD((-A_\alpha^*)^\theta) = [V_0^2,Y]_{1-\theta}= V_0^{2\theta}, \ \ \forall \theta \in  [0,1],\ \  \theta \neq \frac{1}{4}.
\end{align*}
While not needed for our purposes, let us emphasize that the case $\theta=\frac{1}{4}$ is also included in \cite[Theorem 20]{BT11}.

Using the above notation, for $\varepsilon \in (0,\frac{1}{2})$ let us consider the space
\begin{align*}
 W_\infty(V_0^{1+\varepsilon},(V_0^{1-\varepsilon})')&:= \left\{ \by \in L^2(0,\infty;V_0^{1+\varepsilon}) \, | \, \frac{\mathrm{d}}{\mathrm{d}t} \by \in L^2(0,\infty;(V_0^{1-\varepsilon})') \right\}
\end{align*}
As mentioned in, e.g., \cite{Bad12}, it holds that
\begin{align*}
[V_0^{1+\varepsilon},(V_0^{1-\varepsilon})']_{\frac{1}{2}} = [\mD((-A_\alpha^*)^{\frac{1+\varepsilon}{2}}),\mD((-A_\alpha^*)^{\frac{\varepsilon-1}{2}})]_{\frac{1}{2}}= \mD((-A_\alpha^*)^{\frac{\varepsilon}{2}}) = V_0^\varepsilon.
\end{align*}
From \cite[Theorem 4.2]{LioM72}, we thus conclude that
\begin{align*}
W_\infty(V_0^{1+\varepsilon},(V_0^{1-\varepsilon})')\hookrightarrow C_b([0,\infty);V_0^\varepsilon ),
\end{align*}
where $C_b$ denotes the space of continuous and bounded functions.


Before we continue, let us cite the following result from \cite{GruS91}.

\begin{proposition}(\cite[B.1]{GruS91})\label{prop:grub}
Let $\lambda,\mu,\omega \in \mathbb R$. One has for $f\in H^{\lambda + \mu}(\Omega)$ and $g\in H^{\lambda+\omega}(\Omega)$ (where $\Omega$ is a smooth open subset of $\mathbb R^n$):
\begin{align*}
\|fg \| _{H^{\lambda}(\Omega)} \le C \| f\| _{H^{\lambda+\mu}(\Omega)} \| g \| _{H^{\lambda+\omega}(\Omega)} ,
\end{align*}
\begin{itemize}
\item[(i)] when $\mu+\omega +\lambda \ge \frac{n}{2}$,
\item[(ii)] with $\mu \ge 0, \omega \ge 0, 2\lambda \ge -\mu -\omega ,$
\item[(iii)] except that $\mu+\omega + \lambda > \frac{n}{2}$ if equality holds somewhere in $(ii)$.

\end{itemize}
\end{proposition}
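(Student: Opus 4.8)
The plan is to recognise the stated bound as the classical Sobolev multiplication theorem and to prove it by reducing, first, to the whole space $\R^n$ and, second, to a symmetric trilinear estimate that is transparent to handle. Write $s_0 = \lambda$, $s_1 = \lambda + \mu$, $s_2 = \lambda + \omega$, so that the three hypotheses read $s_0 \le \min(s_1,s_2)$ (which is $\mu,\omega \ge 0$), $s_1 + s_2 \ge 0$ (which is $2\lambda \ge -\mu-\omega$), and $s_1 + s_2 - s_0 \ge \tfrac{n}{2}$ (which is $\lambda+\mu+\omega \ge \tfrac{n}{2}$), the last one to be strict whenever one of the previous two is an equality (condition (iii)). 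Since $\Omega$ is smooth, bounded extension operators $H^t(\Omega) \to H^t(\R^n)$ are available and restriction is bounded in the opposite direction, so it suffices to prove the estimate on $\R^n$.

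On $\R^n$ I would pass to the symmetric trilinear form by duality. Setting $a_1 = s_1$, $a_2 = s_2$, $a_3 = -s_0$, the claimed inequality $\|f_1 f_2\|_{H^{s_0}} \le C \|f_1\|_{H^{s_1}} \|f_2\|_{H^{s_2}}$ is equivalent to
\begin{equation*}
\Big| \int_{\R^n} f_1 f_2 f_3 \, \mathrm{d}x \Big| \le C \, \|f_1\|_{H^{a_1}} \|f_2\|_{H^{a_2}} \|f_3\|_{H^{a_3}},
\end{equation*}
and under this reformulation the hypotheses become perfectly symmetric: $a_1 + a_2 + a_3 \ge \tfrac{n}{2}$ (condition (i)) together with the three pairwise conditions $a_i + a_j \ge 0$ (condition (ii)), strict in the degenerate cases (condition (iii)). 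In particular the negative-order target $s_0 < 0$ is absorbed into the symmetric picture and needs no separate treatment.

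The trilinear estimate I would establish at the extreme configurations by Sobolev embedding and H\"older's inequality. When all $a_i \in [0,\tfrac{n}{2})$ one has $H^{a_i} \hookrightarrow L^{p_i}$ with $\tfrac{1}{p_i} = \tfrac12 - \tfrac{a_i}{n}$, and H\"older applies precisely when $\sum_i \tfrac{1}{p_i} \le 1$, i.e. when $\tfrac32 - \tfrac{1}{n}\sum_i a_i \le 1$, which is exactly $\sum_i a_i \ge \tfrac{n}{2}$; a factor with $a_i > \tfrac{n}{2}$ is placed in $L^\infty$, and a factor with $a_i < 0$ is handled by duality, pairing it against the product of the other two, whose regularity is guaranteed by the pairwise conditions $a_i + a_j \ge 0$. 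The full admissible region described by (i)--(ii) is then filled by multilinear complex interpolation between these corner estimates, and restriction back to $\Omega$ concludes.

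The hard part is the boundary of the admissible region, where one of the inequalities is an equality; this is exactly what condition (iii) forbids by forcing a strict inequality. The obstruction is the failure of the critical embedding $H^{n/2}(\R^n) \hookrightarrow L^\infty(\R^n)$: if, say, $\mu = 0$ (so $s_0 = s_1$ and one must place the second factor in $L^\infty$), the estimate holds only when $s_2 = a_2 > \tfrac{n}{2}$ strictly, and genuinely fails at $s_2 = \tfrac{n}{2}$; similarly a vanishing pairwise sum $a_i + a_j = 0$ degrades the corresponding duality pairing. At these critical configurations the crude H\"older step loses a logarithm, and the rigorous endpoint argument is best carried out with a Littlewood--Paley (Bony paraproduct) decomposition $f_1 f_2 = T_{f_1} f_2 + T_{f_2} f_1 + R(f_1,f_2)$: the two paraproducts are summed as geometric series using $s_0 \le \min(s_1,s_2)$, while the resonant term $R(f_1,f_2)$ is the delicate block, controlled by $s_1 + s_2 \ge 0$ and $s_1 + s_2 - s_0 \ge \tfrac{n}{2}$, the strict version of which (from (iii)) is what restores summability of the diagonal and removes the logarithmic loss.
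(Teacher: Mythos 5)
The paper does not actually prove this proposition: it is imported as a black box from Grubb and Solonnikov \cite[B.1]{GruS91} and is only ever \emph{used} (in Lemma \ref{lem:grubsol}) to bound the coupling terms in the adjoint equation. So there is no in-paper argument to compare against; what you have written is a reconstruction of the standard proof of the Sobolev multiplication theorem, and its architecture is sound. Your reduction to $\R^n$ by extension/restriction, the passage to the symmetric trilinear form with exponents $a_1=\lambda+\mu$, $a_2=\lambda+\omega$, $a_3=-\lambda$ (under which (i) becomes $a_1+a_2+a_3\ge \tfrac{n}{2}$ and (ii) becomes the three pairwise conditions $a_i+a_j\ge 0$), and the identification of (iii) with the failure of the critical embedding $H^{n/2}(\R^n)\hookrightarrow L^\infty(\R^n)$ are all correct and are exactly how this result is usually organized. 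A few steps remain genuinely schematic, though none is a wrong turn. The H\"older--Sobolev step on $\R^n$ needs $\sum_i 1/p_i=1$ exactly, not $\le 1$; one first lowers some $a_i$ to reach $\sum_i a_i=\tfrac{n}{2}$ (or exploits that the extensions can be taken with fixed compact support). The claim that ``multilinear complex interpolation between corner estimates'' fills the whole admissible polyhedron requires Stein interpolation for the analytic family $f_i\mapsto\langle D\rangle^{-a_i(z)}f_i$ together with a check that the region is the convex hull of configurations you can treat directly; this is standard but not automatic. The endpoint cases governed by (iii) carry the real content, and your one-sentence appeal to the Bony decomposition is the right tool but would need to be executed: the bounds $\|T_f g\|_{H^{s}}\lesssim\|f\|_{L^\infty}\|g\|_{H^{s}}$ and $\|R(f,g)\|_{H^{s_1+s_2-n/2}}\lesssim\|f\|_{H^{s_1}}\|g\|_{H^{s_2}}$ for $s_1+s_2>0$ are precisely where the strict inequalities enter. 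Finally, for $\lambda<0$ one must also define the product $fg$ on $\Omega$ and verify it agrees with the restriction of the product of extensions, a routine but nontrivial point that the extension/restriction reduction glosses over. As written, your proposal is a faithful outline of the argument behind \cite{GruS91} rather than a self-contained proof; for the purposes of this paper the citation suffices.
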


These estimates allow us to  bound the coupling terms appearing in the adjoint equation.

\begin{lemma}\label{lem:grubsol}
Let $\varepsilon \in (0,\frac{1}{2})$. Let $\bar{\by}\in W_\infty$ and $\bp \in W_\infty(V_0^{1+\varepsilon},(V_0^{1-\varepsilon})').$ Then
\begin{align*}
 \left\| (\nabla \bar{\by})^T \bp \right\|_{L^2(0,\infty;(V_0^{1-\varepsilon})')} &\le M_1 \| \bar{\by} \| _{L^2(0,\infty;V)} \| \bp \| _{W_\infty(V_0^{1+\varepsilon},(V_0^{1-\varepsilon})')} \\
  \left\| (\bar{\by} \cdot \nabla ) \bp \right\|_{L^2(0,\infty;(V_0^{1-\varepsilon})')} &\le M_2 \| \bar{\by} \| _{L^\infty(0,\infty;Y)} \| \bp \| _{W_\infty(V_0^{1+\varepsilon},(V_0^{1-\varepsilon})')} .
 \end{align*}
\end{lemma}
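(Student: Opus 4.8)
The plan is to prove both bounds by the same three-step mechanism: a duality reduction of the $(V_0^{1-\varepsilon})'$-norm to a standard negative Sobolev norm, a pointwise-in-time multiplication estimate taken from Proposition \ref{prop:grub}, and a final integration in time in which the temporal $L^2$- and $L^\infty$-norms are distributed differently between $\bar{\by}$ and $\bp$ in the two cases.

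First I would carry out the duality reduction. Since $\varepsilon \in (0,\tfrac12)$ we have $1-\varepsilon \in (\tfrac12,1)$, so elements of $V_0^{1-\varepsilon}$ vanish on $\Gamma$ and $V_0^{1-\varepsilon}$ embeds continuously into $\mathbb{H}_0^{1-\varepsilon}(\Omega)$ with the $\mathbb{H}^{1-\varepsilon}$-norm. Hence the restriction map $\mathbb{H}^{\varepsilon-1}(\Omega)=(\mathbb{H}_0^{1-\varepsilon}(\Omega))' \to (V_0^{1-\varepsilon})'$ is norm-nonincreasing, and any $\bw \in \mathbb{H}^{\varepsilon-1}(\Omega)$ satisfies $\| \bw \|_{(V_0^{1-\varepsilon})'} \le \| \bw \|_{\mathbb{H}^{\varepsilon-1}(\Omega)}$. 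It therefore suffices to bound each coupling term in $\mathbb{H}^{\varepsilon-1}(\Omega)$, at almost every time $t$.

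For the pointwise spatial estimates I would apply Proposition \ref{prop:grub} componentwise with $n=2$ and the parameter choice $\lambda = \varepsilon-1$, $\mu = 1-\varepsilon$, $\omega = 1$. With these values (ii) holds with strict inequalities ($\mu=1-\varepsilon>0$, $\omega=1>0$, $2\lambda=2\varepsilon-2 > \varepsilon-2 = -\mu-\omega$), while (i) holds with equality $\mu+\omega+\lambda = 1 = n/2$; since no equality occurs in (ii), condition (iii) imposes nothing and the estimate $\| fg \|_{H^{\varepsilon-1}} \le C \| f \|_{H^0} \| g \|_{H^\varepsilon}$ applies. For the first term I take $f$ to be a component of $\nabla\bar{\by}$ (which lies in $H^0$ because $\bar{\by}\in V \subset \mathbb{H}^1$) and $g$ a component of $\bp$, using only the regularity $\bp(t)\in V_0^\varepsilon \subset \mathbb{H}^\varepsilon$; this gives $\| (\nabla \bar{\by})^T \bp(t) \|_{\mathbb{H}^{\varepsilon-1}} \le C \| \bar{\by}(t) \|_V \| \bp(t) \|_{V_0^\varepsilon}$. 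For the second term I take $f$ a component of $\bar{\by}$ (in $H^0$, i.e.\ in $Y$) and $g$ a component of $\nabla\bp$ (in $H^\varepsilon$, using $\bp(t)\in V_0^{1+\varepsilon}$), which yields $\| (\bar{\by}\cdot\nabla) \bp(t) \|_{\mathbb{H}^{\varepsilon-1}} \le C \| \bar{\by}(t) \|_Y \| \bp(t) \|_{V_0^{1+\varepsilon}}$.

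Finally I would integrate in time with the appropriate Hölder split. For the first inequality I place the temporal $L^2$-norm on $\bar{\by}$ and the $L^\infty$-norm on $\bp$, bounding $\| \bp(t) \|_{V_0^\varepsilon} \le \| \bp \|_{C_b([0,\infty);V_0^\varepsilon)} \le M \| \bp \|_{W_\infty(V_0^{1+\varepsilon},(V_0^{1-\varepsilon})')}$ through the embedding recalled just before Proposition \ref{prop:grub}; this produces the claimed factor $\| \bar{\by} \|_{L^2(0,\infty;V)} \| \bp \|_{W_\infty(V_0^{1+\varepsilon},(V_0^{1-\varepsilon})')}$. For the second inequality I place the $L^\infty$-norm on $\bar{\by}$ and the $L^2$-norm on $\bp$, giving $\| \bar{\by} \|_{L^\infty(0,\infty;Y)} \| \bp \|_{L^2(0,\infty;V_0^{1+\varepsilon})} \le \| \bar{\by} \|_{L^\infty(0,\infty;Y)} \| \bp \|_{W_\infty(V_0^{1+\varepsilon},(V_0^{1-\varepsilon})')}$. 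I expect the main obstacle to be the exponent bookkeeping in Proposition \ref{prop:grub}, and in particular justifying the borderline identity $\mu+\omega+\lambda = n/2$, which is admissible only because all inequalities in (ii) are strict; once the triple $(\lambda,\mu,\omega)$ is fixed as above, both estimates reduce to the same product inequality and the choice of which factor carries the $L^2$- versus $L^\infty$-norm in time is dictated by the available temporal regularity of $\bar{\by}$ and $\bp$.
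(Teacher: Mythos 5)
Your proposal is correct and follows essentially the same route as the paper: the same multiplication estimate of Proposition \ref{prop:grub} with the parameter triple $\lambda=\varepsilon-1$, $\{\mu,\omega\}=\{1,1-\varepsilon\}$ (you merely swap the roles of $f$ and $g$, which is immaterial by the symmetry of the hypotheses), followed by the same temporal H\"older splits and the embedding $W_\infty(V_0^{1+\varepsilon},(V_0^{1-\varepsilon})')\hookrightarrow C_b([0,\infty);V_0^{\varepsilon})$. Your explicit duality reduction from $(V_0^{1-\varepsilon})'$ to $\mathbb{H}^{\varepsilon-1}(\Omega)$ and the verification that the borderline case $\mu+\omega+\lambda=n/2$ is admissible are both left implicit in the paper, but they are correct and only make the argument more complete.
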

\begin{proof}
For the first assertion, consider $\bp_i \frac{\partial \bar{\by}_j}{\partial x_k}$ with $i,j,k\in \{1,2\}$. Set $\lambda=\varepsilon -1, \mu = 1,\omega =1 -\varepsilon$. Then
\begin{align*}
 \mu +\omega + \lambda = 1 + (1-\varepsilon) + (\varepsilon - 1) = 1 \ge \frac{n}{2}, \\
 \mu >0 , \  \omega > 0, \ 2\lambda + \mu + \omega = 2(\varepsilon -1) + 1 +(1-\varepsilon) = \varepsilon > 0.
\end{align*}
Applying Proposition \ref{prop:grub} with $f=\bp_i$ and $g = \frac{\partial \bar{\by}_j}{\partial x_k}$ yields
\begin{align*}
\int_0^\infty \| fg \|_{H^{\varepsilon-1}(\Omega)}^2 \,\mathrm{d}t \le M \int_0^\infty \| \bp_i \| _{H^\varepsilon(\Omega)}^2 \left\| \frac{\partial \bar{\by}_j}{\partial x_k} \right\|_{L^2(\Omega)}^2 \, \mathrm{d}t
\end{align*}
which shows the first statement. For the second statement, set $f= \by_i$, $g= \frac{\partial \bp_j}{\partial x_k} , \lambda=\varepsilon-1$, $\mu=1-\varepsilon$ and $\omega=1$.
\end{proof}

The following lemma is formulated for an abstract generator $\widetilde{A}$ of an analytic exponentially stable semigroup on $Y$. It will subsequently be applied with $\widetilde{A}=A_\alpha$.

\begin{lemma}\label{lem:backwards}
Let $\widetilde{A} \in \mathcal{L}(V,V')$ generate an exponentially stable semigroup on $Y$ and assume that there exists $M\ge 0$ such that for every $\mathbf{f} \in L^2(0,\infty;V')$ there exists a unique $\by \in W_\infty$ satisfying
\begin{align*}
\dot{\by} &= \widetilde{A} \by + \mathbf{f} \ \ \text{on } [0,\infty) , \ \ \by(0) = 0 , \ \  \| \by \| _{W_\infty} \le M \| \mathbf{f} \| _{L^2(0,\infty;V')}.
\end{align*}
Then there exists $\tilde{M}$ such that for all $\Phi \in L^2(0,\infty;V')$ there exists a unique $\mathbf{r} \in W_\infty$ such that
\begin{align*}
-\dot{\mathbf{r}}  = \widetilde{A}^* \mathbf{r} + \Phi, \ \  \| \mathbf{r} \| _{W_\infty} \le \tilde{M} \| \Phi \| _{L^2(0,\infty;V')}.
\end{align*}
\end{lemma}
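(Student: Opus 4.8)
The plan is to construct the backward solution as the image of $\Phi$ under the adjoint of the forward solution operator, thereby transferring the well-posedness hypothesis on $\widetilde A$ to $\widetilde A^*$ by pure duality. First I would record that the assumption defines a bounded linear operator
\[
\mathcal S\colon L^2(0,\infty;V') \to L^2(0,\infty;V), \qquad \mathcal S\bff = \by,
\]
where $\by \in W_\infty$ is the unique solution of $\dot\by = \widetilde A\by + \bff$ with $\by(0)=0$; linearity follows from uniqueness, and the assumed bound together with the continuous embedding $W_\infty \hookrightarrow L^2(0,\infty;V)$ gives $\|\mathcal S\| \le M$. Since $L^2(0,\infty;V')$ is the dual of $L^2(0,\infty;V)$ with respect to the pivot space $L^2(0,\infty;Y)$, the adjoint $\mathcal S^*\colon L^2(0,\infty;V') \to L^2(0,\infty;V)$ is well defined, with $\|\mathcal S^*\| \le M$. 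For a given $\Phi$ I would then set $\br := \mathcal S^*\Phi$, so that $\br \in L^2(0,\infty;V)$, $\|\br\|_{L^2(0,\infty;V)} \le M\|\Phi\|_{L^2(0,\infty;V')}$, and, by definition of the adjoint,
\[
\langle \bff,\br\rangle_{L^2(0,\infty;V'),L^2(0,\infty;V)} = \langle \Phi,\mathcal S\bff\rangle_{L^2(0,\infty;V'),L^2(0,\infty;V)}, \qquad \forall \bff \in L^2(0,\infty;V').
\]

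The next step is to upgrade this abstract identity to the strong form of the adjoint equation. Every $\by \in W_\infty^0$ equals $\mathcal S\bff$ for the choice $\bff := \dot\by - \widetilde A\by \in L^2(0,\infty;V')$, by uniqueness of the forward problem; inserting this into the duality identity and using $\langle \widetilde A\by,\br\rangle = \langle \by,\widetilde A^*\br\rangle$ (valid since $\br \in L^2(0,\infty;V)$ and $\widetilde A^* \in \mathcal L(V,V')$) yields
\[
\int_0^\infty \langle \dot\by(t),\br(t)\rangle\,\mathrm{d}t = \int_0^\infty \langle \by(t),\widetilde A^*\br(t)+\Phi(t)\rangle\,\mathrm{d}t, \qquad \forall \by \in W_\infty^0.
\]
Testing with $\by(t)=\phi(t)\bv$ for $\phi \in \mathcal C_c^\infty(0,\infty)$ and $\bv \in V$ identifies the distributional time derivative of $\br$ as $-\dot\br = \widetilde A^*\br + \Phi$. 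As the right-hand side belongs to $L^2(0,\infty;V')$, this simultaneously shows that $\br \in W_\infty$ and that $\br$ solves the adjoint equation. The norm estimate is then immediate from $\|\dot\br\|_{L^2(0,\infty;V')} \le \|\widetilde A^*\|_{\mathcal L(V,V')}\|\br\|_{L^2(0,\infty;V)} + \|\Phi\|_{L^2(0,\infty;V')}$, giving $\|\br\|_{W_\infty} \le \tilde M\|\Phi\|_{L^2(0,\infty;V')}$ with, e.g., $\tilde M := M(1+\|\widetilde A^*\|_{\mathcal L(V,V')})+1$.

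For uniqueness I would take the difference $\bw \in W_\infty$ of two solutions, which satisfies $-\dot\bw = \widetilde A^*\bw$. For arbitrary $\bff$ with forward state $\by = \mathcal S\bff$, integrating by parts on $[0,\infty)$ gives $\int_0^\infty(\langle \dot\by,\bw\rangle + \langle \by,\dot\bw\rangle)\,\mathrm{d}t = 0$: the boundary term at $t=0$ vanishes because $\by(0)=0$, and the one at $t=+\infty$ vanishes because any $W_\infty$-function tends to $0$ in $Y$ (indeed $\tfrac{\mathrm{d}}{\mathrm{d}t}\|\by(t)\|_Y^2 \in L^1$ while $\|\by(\cdot)\|_Y^2 \in L^1$, forcing the limit to be $0$). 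Substituting the two equations and cancelling $\langle \widetilde A\by,\bw\rangle = \langle \by,\widetilde A^*\bw\rangle$ leaves $\langle \bff,\bw\rangle_{L^2(0,\infty;V'),L^2(0,\infty;V)} = 0$ for all $\bff$, whence $\bw = 0$ by nondegeneracy of the pairing.

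The main obstacle is exactly the passage from the abstractly defined object $\mathcal S^*\Phi$ to a bona fide $W_\infty$-solution of the PDE: one has to verify that $\bff \mapsto \mathcal S\bff$ exhausts all of $W_\infty^0$, so that the duality identity can legitimately be read as the weak formulation of the adjoint equation, and one has to control the boundary contributions at infinity in the integration by parts. Both rest on the embedding $W_\infty \hookrightarrow C_b([0,\infty);Y)$ and on the decay of $W_\infty$-functions in $Y$, which is why these facts deserve to be spelled out carefully rather than taken for granted.
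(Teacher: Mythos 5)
Your argument is correct and is essentially the paper's own proof: your solution operator $\mathcal{S}$ is the inverse of the map $T\by=\dot\by-\widetilde A\by$ used there, so setting $\br=\mathcal{S}^{*}\Phi$ is the same as solving $T^{*}\br=\Phi$, and the identification of the distributional derivative $-\dot\br=\widetilde A^{*}\br+\Phi$ together with the resulting $W_\infty$-bound proceeds identically. The only (welcome) addition is that you spell out uniqueness in $W_\infty$ via integration by parts and the decay of $W_\infty$-functions in $Y$, a point the paper leaves implicit in the injectivity of $T^{*}$.
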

\begin{proof}
\emph{Step 1}.
Let us define $T\colon W^0_\infty \to L^2(0,\infty;V')$ by $T\by = \dot{\by} - \widetilde{A}\by$. Considering the adjoint $T^*\colon L^2(0,\infty;V) \to (W^0_\infty)'$ we have:
\begin{align*}
\langle T^* \boldsymbol{\varphi} , \by \rangle _{(W^0_\infty)',W^0_\infty} := \langle \boldsymbol{\varphi}, T\by \rangle_{L^2(0,\infty;V),L^2(0,\infty;V')} = \langle \boldsymbol{\varphi}, \dot{\by} - \widetilde{A} \by \rangle _{L^2(0,\infty;V),L^2(0,\infty;V')}.
\end{align*}
Since, by assumption, $T$ is a homeomorphism it is in particular surjective and injective and by the closed range theorem there exists a constant $C$ such that
\begin{align}\label{eq:back_aux1}
 \| \boldsymbol{\varphi} \| _{L^2(0,\infty;V)} \le C \| T^* \boldsymbol{\varphi} \| _{(W^0_\infty)'} , \ \ \forall \boldsymbol{\varphi} \in L^2(0,\infty;V).
\end{align}
\emph{Step 2}.
Let $\boldsymbol{\Phi}\in L^2(0,\infty;V')$ be arbitrary. Then there exists a unique $\mathbf{r} \in L^2(0,\infty;V)$ such that $T^* \mathbf{r} = \boldsymbol{\Phi}$, and by \eqref{eq:back_aux1} we have $\| \mathbf{r} \|_{L^2(0,\infty;V)} \le C \| \boldsymbol{\Phi} \|_{(W^0_\infty)'} \le C\| \boldsymbol{\Phi} \| _{L^2(0,\infty;V')}.$
Since $T^* \mathbf{r} = \boldsymbol{\Phi}$ we have for all $\by \in W^0_\infty$
\begin{align*}
 \langle \boldsymbol{\Phi}, \by \rangle _{L^2(0,\infty;V'), L^2(0,\infty;V)} &= \langle \mathbf{r},T\by \rangle _{L^2(0,\infty;V),L^2(0,\infty;V')}\\
 & = \langle \mathbf{r} , \dot{\by} \rangle _ {L^2(0,\infty;V),L^2(0,\infty;V')} - \langle \widetilde{A}^* \mathbf{r},\by \rangle _{L^2(0,\infty;V'),L^2(0,\infty;V)}.
\end{align*}
This implies that the time derivative of $\mathbf{r}$, in the sense of distributions, can be extended to a linear form on $W_\infty^0$ with the formula:
\begin{align}
\langle \dot{\mathbf{r}}, \by \rangle_{(W_\infty^0)',W_\infty^0}
= & \ - \langle \mathbf{r},\dot{\by} \rangle_{L^2(0,\infty;V),L^2(0,\infty;V')} \notag \\
= & \ - \langle \boldsymbol{\Phi} + \widetilde{A}^* \mathbf{r}, \by \rangle_{L^2(0,\infty;V'),L^2(0,\infty;V)}, \ \forall \mathbf{y}\in W^0_\infty. \label{eq:aux9}
\end{align}
We estimate
\begin{align*}
\langle \boldsymbol{\Phi} + \tilde{A}^* \mathbf{r}, \by \rangle_{L^2(0,\infty;V'),L^2(0,\infty;V)} &\le \| \boldsymbol{\Phi} +  \widetilde{A}^* \mathbf{r} \| _{L^2(0,\infty;V)} \| \mathbf{y} \|_{L^2(0,\infty;V)} \\
  &\le \|\boldsymbol{\Phi} \| _{L^2(0,\infty;V')} \| \mathbf{y} \|_{L^2(0,\infty;V)} + C \| \boldsymbol{\Phi} \|_{L^2(0,\infty;V')} \| \mathbf{y} \|_{L^2(0,\infty;V)} \\
  & = ( 1+ C) \| \boldsymbol{\Phi} \|_{L^2(0,\infty;V')} \| \mathbf{y} \|_{L^2(0,\infty;V)}.
\end{align*}
Together with \eqref{eq:aux9} and recalling that $W_\infty^0$ is dense in $L^2(0,\infty; V)$, we obtain that $\dot{\mathbf{r}}$ can be extended to a bounded linear form on $L^2(0,\infty;V)$, i.e., \@ $\dot{\mathbf{r}}$ can be extended to an element of $L^2(0,\infty;V')$, moreover,
\begin{equation*}
\| \dot{\mathbf{r}} \|_{L^2(0,\infty;V')} \leq ( 1+ C) \| \boldsymbol{\Phi} \|_{L^2(0,\infty;V')}.
\end{equation*}
It follows that $\mathbf{r} \in W_\infty$. Moreover,
\begin{align*}
 \| \mathbf{r} \| _{W_\infty} \le 2(1+C) \| \boldsymbol{\Phi} \|_{L^2(0,\infty;V')} \ \text{ and } -\dot{\mathbf{r}}- \widetilde{A}^* \mathbf{r} = \boldsymbol{\Phi} \ \ \text{in } L^2(0,\infty;V').
\end{align*}

\end{proof}

\begin{corollary}\label{cor:backwards_eps}
Let $\varepsilon \in (0,\frac{1}{2}).$ For all $\boldsymbol{\Phi} \in L^2(0,\infty;(V_0^{1-\varepsilon})')$, the system
\begin{equation*}
-\dot{\mathbf{r}}= A_{\alpha}^* \mathbf{r} + \boldsymbol{\Phi}
\end{equation*}
has a unique solution $\mathbf{r} \in W_\infty(V_0^{1+\varepsilon},(V_0^{1-\varepsilon})')$.
Moreover, there exists a constant $M_\alpha>0$ independent of $\boldsymbol{\Phi}$ such that
\begin{equation} \label{eq:estimate_adjoint_sys}
\| \mathbf{r} \|_{W_\infty(V_0^{1+\varepsilon},(V_0^{1-\varepsilon})')} \leq M_\alpha \| \boldsymbol{\Phi} \|_{L^2(0,\infty;(V_0^{1-\varepsilon})')}.
\end{equation}
\end{corollary}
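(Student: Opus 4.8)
The plan is to reproduce the duality argument of Lemma~\ref{lem:backwards} inside the interpolation scale $(V_0^s)$, exploiting that the whole scale is generated by fractional powers of $-A_\alpha^*$. The one structural subtlety that must be handled with care is the choice of pivot. The spaces $V_0^{1+\varepsilon}$ and $(V_0^{1-\varepsilon})'$ form a Gelfand triple only around $V_0^\varepsilon$, so a verbatim application of Lemma~\ref{lem:backwards} would return the $V_0^\varepsilon$-adjoint of the generator, not the genuine operator $A_\alpha^*$ that appears in the costate equation. To recover exactly $A_\alpha^*$, I would run the closed-range argument with the \emph{forward} Gelfand triple $V_0^{1-\varepsilon}\subset V_0^{-\varepsilon}\subset (V_0^{1+\varepsilon})'$ and systematically dualize with respect to the pivot $Y$, under which $(V_0^s)'=V_0^{-s}$ and the transpose of $A_\alpha$ is precisely $A_\alpha^*$. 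Note that these two scales are dual to each other through $Y$ (exponents $1-\varepsilon,-1-\varepsilon$ versus $1+\varepsilon,\varepsilon-1$), which is exactly what makes the $Y$-pairing natural and produces the $Y$-adjoint.

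First I would record the mapping properties. Since $A_\alpha$ is of second order it extends to a bounded operator $A_\alpha\in\mathcal{L}(V_0^{1-\varepsilon},(V_0^{1+\varepsilon})')$ (it shifts regularity by $-2$, and $(V_0^{1+\varepsilon})'=V_0^{-1-\varepsilon}$), and from the fractional-power characterization $\mathcal{D}((-A_\alpha^*)^\theta)=V_0^{2\theta}$ of \cite{BT11}, together with the corresponding statement for $A_\alpha$, the semigroup $e^{A_\alpha t}$ extends to an analytic, exponentially stable semigroup on each space of the scale, in particular on $V_0^{-1-\varepsilon}$. The main work is then the forward maximal-regularity estimate in this scale: for every $\mathbf{f}\in L^2(0,\infty;(V_0^{1+\varepsilon})')$ the problem $\dot{\by}=A_\alpha\by+\mathbf{f}$, $\by(0)=0$, admits a unique $\by\in W_\infty(V_0^{1-\varepsilon},(V_0^{1+\varepsilon})')$ with $\|\by\|\le M\|\mathbf{f}\|$. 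I expect this to be the principal obstacle; it can be obtained by transporting the analytic-semigroup argument behind Consequence~\ref{cons:C2} to the base space $V_0^{-1-\varepsilon}$ (whose domain for $A_\alpha$ is exactly $V_0^{1-\varepsilon}$), the exponential stability once again supplying the passage from $(0,T)$ to $(0,\infty)$.

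With this in hand, $T\by=\dot{\by}-A_\alpha\by$ is a homeomorphism from the subspace $W_\infty^0(V_0^{1-\varepsilon},(V_0^{1+\varepsilon})')$ of functions vanishing at $t=0$ onto $L^2(0,\infty;(V_0^{1+\varepsilon})')$, exactly as in Step~1 of Lemma~\ref{lem:backwards}. Dualizing with the $Y$-pairing, the dual of $L^2(0,\infty;(V_0^{1+\varepsilon})')$ is $L^2(0,\infty;V_0^{1+\varepsilon})$, and the closed-range theorem yields, for the given $\boldsymbol{\Phi}\in L^2(0,\infty;(V_0^{1-\varepsilon})')$ (viewed in $(W_\infty^0)'$ through $\int_0^\infty\langle\boldsymbol{\Phi},\by\rangle_Y$), a unique $\mathbf{r}\in L^2(0,\infty;V_0^{1+\varepsilon})$ with $T^*\mathbf{r}=\boldsymbol{\Phi}$ and $\|\mathbf{r}\|_{L^2(0,\infty;V_0^{1+\varepsilon})}\le M\|\boldsymbol{\Phi}\|_{L^2(0,\infty;(V_0^{1-\varepsilon})')}$. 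Testing $\langle\mathbf{r},\dot{\by}-A_\alpha\by\rangle=\langle\boldsymbol{\Phi},\by\rangle$ against $\by\in W_\infty^0$, integrating the time term by parts, and using that the $Y$-transpose of $A_\alpha$ is $A_\alpha^*$, I obtain $-\dot{\mathbf{r}}=A_\alpha^*\mathbf{r}+\boldsymbol{\Phi}$ in $L^2(0,\infty;(V_0^{1-\varepsilon})')$, as in Step~2 of Lemma~\ref{lem:backwards}. Reading this as $\dot{\mathbf{r}}=-A_\alpha^*\mathbf{r}-\boldsymbol{\Phi}$ and noting that $A_\alpha^*\mathbf{r}\in L^2(0,\infty;(V_0^{1-\varepsilon})')$ bootstraps the time derivative, whence $\mathbf{r}\in W_\infty(V_0^{1+\varepsilon},(V_0^{1-\varepsilon})')$ and \eqref{eq:estimate_adjoint_sys} follows; uniqueness comes from the injectivity of $T^*$.

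As a cross-check, a more direct route bypasses the duality: since $A_\alpha^*$ generates an analytic exponentially stable semigroup on $(V_0^{1-\varepsilon})'$, whose associated domain is $V_0^{1+\varepsilon}$, the anticausal representation $\mathbf{r}(t)=\int_t^\infty e^{A_\alpha^*(s-t)}\boldsymbol{\Phi}(s)\,\mathrm{d}s$ combined with maximal $L^2$-regularity on that base space delivers both the solution and the estimate \eqref{eq:estimate_adjoint_sys}. I would nonetheless present the duality version in order to remain consistent with, and reuse, Lemma~\ref{lem:backwards}.
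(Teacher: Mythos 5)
Your argument is correct in outline, but it takes a genuinely different --- and considerably longer --- route than the paper. The paper's proof is a two-line similarity transformation: it sets $\boldsymbol{\Psi}=(-A_\alpha^*)^{\varepsilon/2}\boldsymbol{\Phi}\in L^2(0,\infty;V')$, applies Lemma~\ref{lem:backwards} on the standard triple $V\subset Y\subset V'$ to obtain $\mathbf{z}\in W_\infty$, and recovers $\mathbf{r}=(-A_\alpha^*)^{-\varepsilon/2}\mathbf{z}$, using that $(-A_\alpha^*)^{-\varepsilon/2}$ is an isomorphism from $V$ onto $V_0^{1+\varepsilon}$ and from $V'$ onto $(V_0^{1-\varepsilon})'$ and commutes with both $A_\alpha^*$ and $\frac{\mathrm{d}}{\mathrm{d}t}$. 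This sidesteps entirely the adjoint-identification issue you rightly worry about (conjugation by a function of $A_\alpha^*$ manifestly preserves the operator), and, more importantly, it absorbs what you call the ``principal obstacle'': your plan requires first establishing the forward maximal regularity $L^2(0,\infty;(V_0^{1+\varepsilon})')\to W_\infty(V_0^{1-\varepsilon},(V_0^{1+\varepsilon})')$ before the closed-range argument can start, and the cleanest proof of that statement is itself the same fractional-power conjugation --- the coercivity behind Consequence~\ref{cons:C2} lives on the pair $(V,Y)$ and does not transport verbatim to the base space $V_0^{-1-\varepsilon}$; one needs either the similarity transform or maximal regularity of analytic semigroups along the interpolation scale. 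So your route is sound --- the $Y$-pairing bookkeeping, the identification of the $Y$-adjoint rather than the $V_0^\varepsilon$-adjoint, and the bootstrap giving $\dot{\mathbf{r}}\in L^2(0,\infty;(V_0^{1-\varepsilon})')$ are all handled correctly --- but it front-loads the real work into a step you only sketch, whereas the paper's conjugation disposes of the whole corollary at once by reusing Lemma~\ref{lem:backwards} verbatim. Your ``cross-check'' via the anticausal representation $\mathbf{r}(t)=\int_t^\infty e^{A_\alpha^*(s-t)}\boldsymbol{\Phi}(s)\,\mathrm{d}s$ is likewise viable, subject to the same need for maximal $L^2$-regularity on the shifted base space.
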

\begin{proof}

 We appy Lemma \ref{lem:backwards} to $
 \dot{\bz} = A_{\alpha}^* \bz + \boldsymbol{\Psi}$ with
 $\boldsymbol{\Psi}=(-A_{\alpha}^*)^{\frac{\varepsilon}{2}} \boldsymbol{\Phi}\in L^2(0,T;V')$, to obtain
$ \| \mathbf{z} \| _{W_\infty} \le \tilde{M} \| \boldsymbol{\Phi} \| _{L^2(0,\infty;V')}$. Setting $\br:= (-A_{\alpha}^*)^{-\frac{\varepsilon}{2}} \mathbf{z}$
and using that $(-A_\alpha^*)^{-\frac{\varepsilon}{2}}$ is an isomorphism from $V$ to $V_0^{1+\varepsilon}$ and from $V'$ to $(V_0^{1-\varepsilon})'$, \cite[Section 1.15.2, p.101]{Tri78}, the claim follows.

\end{proof}



\begin{proof}[Proof of Proposition \ref{proposition:more_reg}]
 Only regularity has to be shown.
 Let us fix $\varepsilon \in (0,\frac{1}{2})$ and let $M_\alpha$ be given by Corollary \ref{cor:backwards_eps}.
  Let us define
 \begin{align*}
  \mathcal{M}&:=\left\{ \bp \in W_\infty(V_0^{1+\varepsilon},(V_0^{1-\varepsilon})') \, | \, \| \bp \|_{W_\infty(V_0^{1+\varepsilon},(V_0^{1-\varepsilon})')} \le 2 M_\alpha\gamma \right\} , \\
  %
\gamma&:=  \alpha\|\bp \|_{L^2(0,\infty;V)} + \| \bar{\by} \|_{L^2(0,\infty;V)} .
 \end{align*}
 Let us then choose $\tilde{\delta}_4>0$ such that
 \begin{align*}
 \| \bar{\by} \| _{L^2(0,\infty;V)} \le M \| \by_0 \| _Y  \le M\tilde{\delta}_4\le \frac{1}{2(M_1+M_2)M_\alpha},
 \end{align*}
 where $M_1$ and $M_2$ are given by Lemma \ref{lem:grubsol}.
 Further consider the mapping $\mathcal{Z}$ defined by
 \begin{align*}
  \mathcal{Z}& \colon \mathcal{M} \ni \mathbf{q} \mapsto \mathbf{r} \in  W_\infty(V_0^{1+\varepsilon},(V_0^{1-\varepsilon})') ,
 \end{align*}
 where $\mathbf{r}$ is the unique solution of
 \begin{align*}
  -\dot{\br} = A_\alpha^{*}\br + (\bar{\by}\cdot \nabla)\mathbf{q}-(\nabla \bar{\by})^T \mathbf{q} + \alpha \mathbf{p} + \bar{\by}
 \end{align*}
 according to Lemma \ref{lem:grubsol} and Corollary \ref{cor:backwards_eps}.
 Given $\mathbf{q} \in \mathcal{M}$, it holds that
 \begin{align*}
\| \br \| _{W_\infty(V_0^{1+\varepsilon},(V_0^{1-\varepsilon})')} &\le M_\alpha \left( \| (\bar{\by}\cdot \nabla)\mathbf{q}-(\nabla \bar{\by})^T \mathbf{q} + \alpha \mathbf{p} + \bar{\by} \| _{L^2(0,\infty;(V_0^{1-\varepsilon})')}\right) \\
 &\le M_\alpha \left(  (M_1+M_2){ \| \bar{\by}\|_{L^2(0,\infty;V)}} \| \mathbf{q} \|_{W_\infty(V_0^{1+\varepsilon},(V_0^{1-\varepsilon})')}+  \gamma \right) \\
   &\le M_\alpha \left( \frac{1}{2M_\alpha} \| \mathbf{q} \|_{W_\infty(V_0^{1+\varepsilon},(V_0^{1-\varepsilon})')}+  \gamma \right) \le 2M_\alpha \gamma .
 \end{align*}
 We obtain that $\mathcal{Z}(\mathcal{M})\subseteq \mathcal{M}$. Consider $\mathbf{q}_1,\mathbf{q}_2 \in \mathcal{Z}$ and let $\br=\mathcal{Z}(\mathbf{q}_1)-\mathcal{Z}(\mathbf{q}_2)$. Note that $\br$ solves
 \begin{align*}
- \dot{\br} = A_\alpha^*\mathbf{r} + (\bar{\by}\cdot \nabla)(\mathbf{q}_1-\mathbf{q}_2) - (\nabla \bar{\by})^T (\mathbf{q}_1-\mathbf{q}_2)
\end{align*}
so that we obtain
\begin{align*}
\|\mathcal{Z}(\mathbf{q}_1)-\mathcal{Z}(\mathbf{q}_2)\|_{W_\infty(V_0^{1+\varepsilon},(V_0^{1-\varepsilon})')} &= \| \br \| _{W_\infty(V_0^{1+\varepsilon},(V_0^{1-\varepsilon})')}\\
 &\le M_\alpha (M_1+M_2) \| \bar{\by}\|_{W_\infty} \| \mathbf{q}_1-\mathbf{q}_2\| _{W_\infty(V_0^{1+\varepsilon},(V_0^{1-\varepsilon})')} \\
&\le \frac{1}{2} \| \mathbf{q}_1-\mathbf{q}_2\| _{W_\infty(V_0^{1+\varepsilon},(V_0^{1-\varepsilon})')}.
\end{align*}
Thus by the Banach fixed point theorem we conclude that there exists $\mathbf{r} \in W_\infty(V_0^{1+\varepsilon},(V_0^{1-\varepsilon})') \subset W_\infty$ which is a solution of
\begin{align*}
 -\dot{\mathbf{r}} = A_\alpha^* \mathbf{r} + (\bar{\by}\cdot \nabla) \mathbf{r} - (\nabla \bar{\by})^T \mathbf{r} + \alpha \mathbf{p} + \mathbf{\bar{\by}}.
\end{align*}
It remains to show that $\mathbf{r}$ solves \eqref{eq:costate_NLQ}. For this, we define $\mathbf{e}:=\mathbf{r}-\mathbf{p} \in L^2(0,\infty;V)$ and observe that $\mathbf{e}$ satisfies
\begin{equation*}
T^*\mathbf{e}= G^*\mathbf{e} \quad \text{in $(W^0_\infty)'$},
\end{equation*}
where the operator $G \in \mathcal{L}(W_\infty,L^2(0,\infty;V'))$ is defined in \eqref{eq:G_aux}.
It follows from \eqref{eq:back_aux1} that
\begin{equation*}
\| \mathbf{e} \|_{L^2(0,\infty;V)}
\leq M \| T^* \mathbf{e} \|_{(W^0_\infty)'}
\leq M \| G^* \|_{\mathcal{L}(L^2(0,\infty;V),(W_\infty)')} \| \mathbf{e} \|_{L^2}(0,\infty;V).
\end{equation*}
As a consequence of \eqref{eq:estimate_g_aux}, $\tilde{\delta}_4$ can be reduced so that $\| G^* \| = \| G \| < \frac{1}{M}$. Hence, we obtain $\mathbf{e}=0$ and thus $\mathbf{r}=\mathbf{p}$ showing that $\mathbf{p} \in W_\infty(V_0^{1+\varepsilon},(V_0^{1-\varepsilon})') \subset W_\infty$.
\end{proof}


\end{document}